\newcommand{\R}{\mathbb{R}}
\newcommand{\E}{\mathbb{E}}
\newcommand{\N}{\mathbb{N}}
\newcommand{\p}{\mathbb{P}}
\newcommand{\M}{\mathbb{M}}
\renewcommand{\P}{\mathbb{P}}
\newcommand{\s}{\sigma}
\renewcommand{\t}{\tau}
\newtheorem{theorem}{Theorem}[section]
\newtheorem{remark}[theorem]{Remark}
\newtheorem{lemma}[theorem]{Lemma}
\newtheorem{definition}[theorem]{Definition}
\newtheorem{corollary}[theorem]{Corollary}
\newenvironment{proof}{{\bf Proof:}}{\hfill$\square$\vskip.5cm}
\newenvironment{proofof}{}{\hfill$\square$\vskip.5cm}
\renewcommand{\M}{\mathbb{M}}
\newcommand{\bc}{\boldsymbol{c}}
\newcommand{\bJ}{\boldsymbol{J}}
\newcommand{\bI}{\boldsymbol{I}}
\newcommand{\bK}{\boldsymbol{K}}
\newcommand{\ba}{\boldsymbol{\alpha}}
\renewcommand{\p}{\mathfrak{p}}
\newcommand{\bxi}{\boldsymbol{\xi}}
\newcommand{\bm}{\boldsymbol{m}}
\def\be{\begin{equation}}
\def\ee{\end{equation}}
\def\bea{\begin{eqnarray}}
\def\eea{\end{eqnarray}}
\def\<{\langle}
\def\>{\rangle}
\title{Antiferromagnetic Potts model \\
on the Erd{\H o}s-R\'enyi random graph\thanks{\href{mailto:pierluigi.contucci@unibo.it}{pierluigi.contucci@unibo.it},
\href{mailto:S.Dommers@tue.nl}{s.dommers@tue.nl},
\href{mailto:cristian.giardina@unimore.it}{cristian.giardina@unimore.it},
\href{mailto:sstarr@math.rochester.edu}{sstarr@math.rochester.edu}}}
\author{Pierluigi Contucci\footnote{Universit\`a di Bologna, Piazza di Porta S.Donato 5, 40127 Bologna, Italy} \and Sander Dommers\footnote{Eindhoven University of Technology, Department of Mathematics and Computer Science, P.O. Box 513, 5600 MB
Eindhoven, The Netherlands} \and Cristian Giardin\`a\footnote{Universit\`a di Modena e Reggio E., viale A. Allegri, 9 - 42121 Reggio Emilia , Italy} \and Shannon Starr\footnote{University of Rochester, Department of Mathematics, Rochester, NY, 14627, USA}}
\date{\today}
\begin{document}

\maketitle

\begin{abstract}
\noindent
We study the antiferromagnetic Potts model on the Poissonian Erd{\H o}s-R\'enyi random graph.
By identifying a suitable interpolation structure and an extended variational
principle, together with a positive temperature second-moment analisys  we
prove the existence of a phase transition at a positive critical temperature. 
Upper and lower bounds on the temperature critical value are
obtained from the stability analysis of the replica symmetric solution
(recovered in the framework
of Derrida-Ruelle probability cascades)
and from an entropy  positivity argument.
%

\vspace{8pt}
\noindent
{\small \bf Keywords:} Mean field, dilute antiferromagnet, q-state Potts model, interpolation, extended variational principle, spin glass, replica symmetry
breaking.
\vskip .2 cm
\noindent
{\small \bf MCS numbers:} Primary 60B10, 60G57, 82B20; Secondary 60K35.
\end{abstract}

\maketitle

\section{Introduction and main results}

In this paper we prove some rigorous results on the antiferromagnetic
$q$-Potts model on the Poissonian Erd{\H o}s-R\'enyi random graph of parameter $c$. 
This model is related to diluted spin glasses of disordered statistical mechanics
on one side and to the the graph coloring combinatorial problem on the other.
Since the Erd{\H o}s-R\'enyi random graph has a locally tree-like structure and large loops,
the statistical mechanics model with antiferromagnetic interactions has been reported
to display some spin glass behavior in the physics literature
\cite{van2002random}. In particular it has been argued that the one-step replica symmetry 
breaking solution does not get improved by a higher number of steps \cite{KZ}.
On the other hand it is well known that antiferromagnetic Potts models on graphs are related,
at zero temperature, to the graph coloring problem. This consists in placing
colors on the graph vertices in such a way that two of them connected by
an edge have different color. Some mathematical analysis, from the combinatorial perspective, has been 
obtained in \cite{AchlioptasNaor} for the graph coloring problem. In particular it was
proved there that for a given number of colors there exists a critical connectivity value
which separates the colorable from the un-colorable phases.
The connection among the two approaches has emerged
in recent times also within the algorithmic setting. A method has been developed to study 
graph colorability \cite{braunstein2003polynomial} 
based on ideas from the physics of disordered systems, in particular on the replica
symmetry breaking scheme introduced within the mean field theory
of spin glasses \cite{mezard2001bethe}.

In this paper we obtain a full control of a region in the high temperature phase of the model:
computing the free energy and identifying a phase transition at a critical $\beta^{crit}(c,q)$.
Our main result is the following.
\begin{theorem}
For a given number of colors $q>1 $ and a Poissonian Erd{\H o}s-R\'enyi random graph 
of parameter $c>0$, define the "annealed'' pressure
\begin{equation}
\label{eq:annealedpressure}
\mathscr{P}(\beta,c)\, =\, \ln q + \frac{c}{2}\, \ln\left(1 - \frac{1-e^{-\beta}}{q}\right)\, .
\end{equation}
Define moreover 
\begin{gather*}
\beta_{RS}^{\text{loc}}(c,q)\, 
=\, 
\begin{cases}
\infty & \text{ for $c\leq c_{RS}^{\text{loc}}(q)$,}\\[2pt]
\displaystyle -\ln\left(1-\frac{q}{1+\sqrt{c}}\right)  & \text{ otherwise,}
\end{cases}\\[5pt]
\beta_{\text{ent}}(c,q)\, 
=\, 
\begin{cases}
\infty & \text{ for $c\leq c_{\text{ent}}(q)$,}\\[2pt]
\displaystyle \inf\left\{\beta\, :\, \ln(q) + \frac{c}{2}\, \ln\left(1 - \frac{1-e^{-\beta}}{q}\right) 
\leq \frac{\beta c}{2} \cdot \frac{e^{-\beta}}{q-1+e^{-\beta}}\right\} & \text{ otherwise,}
\end{cases}\\[5pt]
\beta_1(c,q)\,
=\,  \begin{cases} \beta_{RS}^{\text{loc}}(c,2) & \text{ for $q=2$,}\\[2pt]
\infty & \text{ for $q>2$ and $c\leq c_1(q)$,}\\[2pt]
\displaystyle{-\ln\left(1 - \frac{q}{q-1+\sqrt{c/\left(2q\ln(q)\right)}}\right)}\,  & \text{ otherwise,}
\end{cases}
\end{gather*} 
where
$$
c_{RS}^{\text{loc}}(q):=(q-1)^2\, ,\qquad
c_{\text{ent}}(q):=\frac{2\ln(q)}{|\ln(1-q^{-1})|}\quad \text{ and }\quad
c_1(q):=2 q \ln(q)\, .
$$
There is no phase transition at any finite temperature if $c<c_1(q)$.
For $c > \min\{c_{RS}^{\text{loc}}(q),c_{\text{ent}}(q)\}$,
there exist a phase transition at the critical value $\beta^{crit}(c,q)$ with
$$
\beta_1(c,q)\le\beta^{crit}(c,q)\le\min\{\beta_{RS}^{loc}(c,q),\beta_{\text{ent}}(c,q)\}.
$$
The quenched pressure is equal to $\mathscr{P}(\beta,c)$ for $\beta \leq \beta_1(c,q)$ and 
it is different (stricly less) for
$\beta> \min\{\beta_{RS}^{loc}(c,q),\beta_{\text{ent}}(c,q)\}$.
\end{theorem}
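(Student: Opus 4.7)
The plan decomposes the theorem into: (a) the annealed upper bound $P \leq \mathscr{P}$; (b) the matching lower bound $P = \mathscr{P}$ for $\beta \leq \beta_1$; (c) strict inequality $P < \mathscr{P}$ beyond $\min\{\beta_{RS}^{\text{loc}}, \beta_{\text{ent}}\}$; from which the phase transition claims follow by convexity of $P$ in $\beta$ together with analyticity of $\mathscr{P}$. For (a) I would compute $\E[Z_N]$ directly: Poisson$(cN/2)$ edges placed uniformly, combined with the Potts symmetry, give $\E[Z_N] = q^N(1-(1-e^{-\beta})/q)^{cN/2+o(N)}$, so $N^{-1}\ln\E[Z_N] \to \mathscr{P}(\beta,c)$. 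A bounded-difference (Azuma/McDiarmid) estimate on the edge set concentrates $\ln Z_N$ around its mean, and Jensen's inequality then yields $P \leq \mathscr{P}$.

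For (b) I would use the positive-temperature second moment method. Expanding $\E[Z_N^2]$ over pairs of configurations $(\sigma,\tau)$ indexed by the overlap matrix $M_{k,\ell}=N^{-1}|\{i:\sigma_i=k,\tau_i=\ell\}|$ and integrating out the edges produces a Laplace-type sum whose leading saddle sits at the balanced overlap $M_{k,\ell}\equiv 1/q^2$. A Hessian computation at the balanced point shows this saddle dominates all competing ones precisely when $(\beta,c)$ lies on the allowed side of the curve defining $\beta_1(c,q)$, which reduces as $\beta\to\infty$ to $c<2q\ln q=c_1(q)$. In this regime $\E[Z_N^2]/\E[Z_N]^2$ stays bounded, Paley--Zygmund gives $\Prob(Z_N\geq\tfrac12\E[Z_N])\geq\rho>0$ for some $\rho$, and concentration upgrades this to $N^{-1}\ln Z_N \to \mathscr{P}(\beta,c)$ in probability. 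I expect verifying that the balanced saddle dominates uniformly all unbalanced ones to be the main technical obstacle, since non-trivial extrema of the rate function must be excluded throughout $[0,\beta_1]$.

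For (c) there are two independent routes matching the two terms of the $\min$. The entropy route: differentiating $\mathscr{P}$ produces the annealed entropy density $s_{\text{ann}}(\beta)=\mathscr{P}(\beta,c)-\beta\,\partial_\beta \mathscr{P}(\beta,c)$, which is precisely the expression appearing in the definition of $\beta_{\text{ent}}$ and becomes negative for $\beta>\beta_{\text{ent}}$. Since the true entropy density is constrained to $[0,\ln q]$ by the finite per-site state space, convexity of $P$ forces $P(\beta,c)<\mathscr{P}(\beta,c)$ in that regime. The replica-symmetry route: the interpolation method with Derrida--Ruelle cascades yields the extended variational bound $P\leq \inf_\kappa \mathcal{F}(\beta,c;\kappa)$, with the uniform cavity $\kappa_0$ realising $\mathscr{P}$ and the first variation vanishing there. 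A second-variation calculation along small perturbations of $\kappa_0$ produces a quadratic form whose sign changes exactly at $c(1-e^{-\beta})^2=(q-1+e^{-\beta})^2$, equivalently at $\beta=\beta_{RS}^{\text{loc}}(c,q)$; beyond this threshold $\kappa_0$ is locally unstable, and the infimum drops strictly below $\mathscr{P}$. Combining (b) and (c), convexity sandwiches $\beta^{crit}$ in the claimed interval, while the no-phase-transition claim for $c<c_1(q)$ follows because $\beta_1(c,q)=\infty$ there, so $P\equiv\mathscr{P}$ on $[0,\infty)$ and $\mathscr{P}(\cdot,c)$ is real-analytic in $\beta$.
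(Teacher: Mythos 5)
Your decomposition mirrors the paper's: annealed upper bound, constrained second moment up to $\beta_1$, entropy positivity and replica-symmetric local instability for strict inequality, and an analyticity/identity-theorem argument for locating the transition. However, there are two concrete gaps. First, your step (a) fails as written: for Poissonian couplings the unconditional first moment is
$\frac{1}{N}\ln\E[Z_N]\to \ln q-\frac{c}{2q}(1-e^{-\beta})$,
which is \emph{strictly larger} than $\mathscr{P}(\beta,c)=\ln q+\frac{c}{2}\ln\bigl(1-\frac{1-e^{-\beta}}{q}\bigr)$, so Jensen applied to $\E[Z_N]$ only yields a weaker upper bound. To reach $\mathscr{P}$ one must either condition on the total edge count $|\bJ|=K$ (so that $\E[Z_N\mid K]\le q^N(1-\frac{1-e^{-\beta}}{q})^K$ and $\E[K]=cN/2$), as the paper does in Section \ref{sec:AN}, or use the interpolation sum rule of Theorem \ref{thm:Annealed}, which in addition identifies the defect as a sum of squares of replica overlaps. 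The same conditioning, together with the restriction to \emph{balanced} configurations, is also indispensable for your step (b); without it the second-moment ratio blows up.

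Second, the step you flag as "the main technical obstacle" --- global dominance of the balanced overlap --- is precisely where the paper's proof has content, and a Hessian computation at the balanced point cannot deliver it: it gives only local information and no route to the explicit threshold $c_1(q)=2q\ln q$. The paper instead invokes Theorem 9 of Achlioptas--Naor to reduce the optimization over $\mathcal{M}_*([q]^2)$ to the two-parameter family $\mu_{k,t}$, and then exhibits a rescaling $(c,k)\mapsto(\mathfrak{C},\mathfrak{K})$ with $\mathfrak{C}=x^2q^2c$ under which the positive-temperature functional becomes the zero-temperature one already solved in Achlioptas--Naor; the condition $\mathfrak{C}\le 2q\ln q$ is exactly what produces $\beta_1(c,q)$. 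Without this reduction (or an equivalent global argument) your lower bound is not established. Two smaller points: in the replica-symmetric instability analysis the second variation at the symmetric point vanishes identically, so your proposed "quadratic form" is zero; the instability only appears at fourth order in the perturbation parameter, where the competition $-\frac14(q-1)(c^2x^4-cx^2)t^4$ yields the condition $cx^2>1$ and hence $\beta_{RS}^{\text{loc}}$. And your annealed entropy $s_{\text{ann}}=\mathscr{P}-\beta\partial_\beta\mathscr{P}$ is the thermodynamically correct expression but does not literally coincide with the inequality defining $\beta_{\text{ent}}$ in the statement (the sign of the $\beta\partial_\beta\mathscr{P}$ term differs from the paper's), so you should check carefully which threshold your argument actually produces; the two agree only in the $\beta\to\infty$ limit, where both reduce to $c>c_{\text{ent}}(q)$.
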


The proof of the theorem will be a combination of different results obtained in the following
sections. The method we follow combines ideas developed within the rigorous
theory of spin glasses \cite{aizenman2003extended} with second moment bounds
 \cite{AchlioptasNaor}.
A full treatment of the ferromagnetic Ising case
has been given in \cite{DemMon10} for locally tree-like random graphs
and extended in \cite{DommersGiardina-van-der-Hofstad}.
The techniques used there are heavily based on the use of ferromagnetic
Griffiths-Kelly-Sherman and Griffiths-Hurst-Sherman inequalities and
do not apply to our case.
For the antiferromagnetic model we introduce here an interpolation scheme and prove
its monotonicity (see \cite{bayati2010combinatorial} for an alternative interpolation scheme 
in the Bernoulli case).


The paper is organized  as follows. The model is defined in section 2
and the interpolation is introduced in section 3.  The extended variational
principle that applies to our case is formulated and studied in section 4.
Derrida-Ruelle like trial states are described in section 5 and then
used in section 6 to obtain replica symmetry breaking bounds.
Section 7 develops the constrained second moment computation,
along the lines of  the previous zero-temperature computations.
Details of the proofs and explicit computations are included in the
Appendices and make the paper self-contained.

\section{The model}
\label{model}

We start by considering the general set-up for the $q$-state Potts model for $q\in\N$.
We use the notation from combinatorics
$$
[q]\, =\, \{1,2,\dots,q\}\, .
$$
We consider a set of $N$ vertices, such that for each $i=1,\dots,N$,
there is a spin variable
$\s_i \in [q]$.
Given a subset $\mathcal{S} \subseteq \R$ let $\M_N(\mathcal{S})$ be the set of all $N\times N$ matrices
with entries in $\mathcal{S}$.
The $q$-state Potts Hamiltonian is
$$
H_N\, :\, [q]^N \times \M_N(\R) \to \R\, ,\qquad
H_N(\s,\bJ)\, =\, \sum_{i,j=1}^{N} J_{ij} \delta(\s_i,\s_j)\, ,
$$
where $\delta(r,s)$ is the Kronecker delta for $r,s \in [q]$:
1 if $r=s$ and 0 otherwise.

For a general $\bJ \in \M_N(\R)$, we may define the usual thermodynamic quantities.
\begin{gather}
\text{partition function:}\qquad 
Z_N(\bJ) \;=\; \sum_{\sigma\in[q]^{N}} e^{-H_N(\sigma,\bJ)}\, ,\\
\label{eq:BGomega}
\text{Boltzmann-Gibbs measure:}\qquad 
\s \in [q]^N \mapsto \omega_{N,\bJ}(\s)\, =\, \frac{e^{-H_N(\sigma,\bJ)}}{Z_N(\bJ)}\, ,\\[3pt]
\text{Boltzmann-Gibbs expectation:}\qquad
(f:[q]^N \to \R) \mapsto
\langle f\rangle_{N,\bJ}\, 
=\, \sum_{\s \in [q]^N} f(\s) \omega_{N,\bJ}(\s)\, .
\end{gather}
For now, we have absorbed the inverse temperature $\beta$ into the coupling matrix $\bJ$. But later we will make it explicit.

For a general $R \in \N$, and a function of $R$ replicas, $F : ([q]^N)^R \to \R$, we use the same notation
$$
\langle F\rangle_{N,\bJ}\, 
=\, \sum_{\s^{(1)},\dots,\s^{(R)} \in [q]^N} F(\s^{(1)},\dots,\s^{(R)}) \prod_{r=1}^{R} \omega_{N,\bJ}(\s^{(r)})\, .
$$
Often we gather all $R$ replicas as 
$$
\Sigma_R\, =\, (\sigma^{(1)},\dots,\sigma^{(R)})\, .
$$ 
The set of all $\Sigma_R$'s will be denoted $[q]^{N \times R} = \{(\s^{(1)},\dots,\s^{(R)})\, :\, \s^{(1)},\dots,\s^{(R)} \in [q]^N\}$.

The finite-volume approximation to the pressure is
$$
\p_N(\bJ)\, =\, \frac{1}{N}\, \ln Z_N(\bJ)\, .
$$
If $\bJ$ is random then $\p_N(\bJ)$ is, too.
But we will more frequently use a different notation for the quenched pressure, where we take the expectation of $\p_N(\bJ)$ over the disorder distribution of $\bJ$.

\subsection{The disorder distribution}
Let $\N_0$ denote $\{0,1,2,\dots\}$.
For each $\lambda\geq 0$, let $\pi_{\lambda} : \N_0 \to [0,1]$ denote the Poisson-$\lambda$ mass function
\begin{equation}
\label{eq:Poisson}
\pi_{\lambda}(0)\, =\, e^{-\lambda}\, ,\qquad
\pi_{\lambda}(k)\, =\, e^{-\lambda}\, \frac{\lambda^k}{k!}\ \text { for $k\in\{1,2,\dots\}$.}
\end{equation}
A key feature for the interpolation method of Franz and Leone \cite{FranzLeone} for Poisson couplings, generalizing the Guerra-Toninelli interpolation for Gaussians
can be called Poisson summation by parts:
\begin{equation}
\label{eq:PoissonSBP}
\frac{d}{d\lambda} \pi_{\lambda}(k)\, =\, \pi_{\lambda}(k-1) - \pi_{\lambda}(k)\quad \Rightarrow\quad
\frac{d}{d\lambda} E^{\pi_{\lambda}}[f]\, =\, \E^{\pi_{\lambda}}[f(\cdot+1) - f(\cdot)]\, .
\end{equation}

Given $\bc \in \M_N([0,\infty))$, define the measure $\P_{N,\bc}$ on $\M_N(\N_0)$
as
$$
\P_{N,\bc}(A)\, 
=\, \sum_{\bJ \in \M_N(\N_0)} \boldsymbol{1}_A(\bJ)
\prod_{i,j=1}^{N} \pi_{c_{ij}/(2N)}(J_{ij}) \, .
$$
Let $\E_{N,\bc}$ denote the expectation with respect to the probability measure $\P_{N,\bc}$.
It is frequently useful to use the notation
\begin{equation}
\label{eq:doubleE}
\big\langle\hspace{-3pt}\big\langle \cdots \big\rangle\hspace{-3pt}\big\rangle_{N,\beta,\bc}\, =\, \E_{N,\bc}\left[\big\langle \cdots \big\rangle_{N,\beta \bJ}\right]\, .
\end{equation}
Given $\beta \in [0,\infty)$ and $\bc \in \M_N([0,\infty))$, let us define the quenched pressure
\begin{equation}
\label{eq:DefQuench}
p_N(\beta,\bc)\, =\, \E_{N,\bc}\left[\p_N(\beta \bJ)\right]\, .
\end{equation}
Given $c \in [0,\infty)$, let $\P_{N,c}$ denote the measure $\P_{N,\bc}$ for the matrix $\bc$ such that
$c_{ij} = c$ for all $i,j\in\{1,\dots,N\}$,
and similarly let $\E_{N,c}$ and $\p_N(\beta,c)$ denote $\E_{N,\bc}$ and $p_N(\beta,\bc)$ for this choice of $\bc$.
Similarly,
let 
$$
\big\langle \hspace{-2pt} \big\langle \cdots \big\rangle \hspace{-2pt}\big\rangle_{N,\beta,c}
$$
denote $\big\langle \hspace{-2pt} \big\langle \cdots \big\rangle \hspace{-2pt}\big\rangle_{N,\beta,\bc}$
for this special choice of $\bc$.
Occasionally it is necessary to explicitly denote the dependence of $p_N(\beta,c)$ on $q$
in which case we write $p_N(\beta,c,q)$.

\section{Interpolation}

\label{sec:interpolation}

In the present model we use interpolation  to prove existence of the thermodynamic limit of the quenched pressure
(see \cite{bayati2010combinatorial} for the antiferomagnetic model with Bernoulli dilution, not the Poissonian
case we consider here).
The method of interpolation is a well-known tool for disordered mean-field models of statistical mechanics
(see \cite{FranzLeone} for the diluted spin-glass and \cite{GT1} for the Sherrington-Kirkpatrcik model).
%
%
\begin{lemma}
\label{lem:interpo}
Given any differentiable curve $t \mapsto \bc(t)$ in $\M_N([0,\infty))$, 
$$
\frac{d}{dt}\,  p_N(\beta,\bc(t))\, 
=\, \frac{1}{2N^2}\, \sum_{i,j=1}^{N} \frac{dc_{ij}}{dt}\, \E_{N,\bc(t)}\left[\ln\left(1 - \left(1-e^{-\beta}\right) \left\langle \delta(\s_i,\s_j)\right\rangle_{N,\beta\bJ}\right)\right]\, .
$$
\end{lemma}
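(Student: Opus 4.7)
The plan is to reduce the $t$-derivative to a sum of partial derivatives in the individual entries $c_{ij}$, and then use the Poisson summation-by-parts identity \eqref{eq:PoissonSBP} together with an explicit computation of the ratio of partition functions when a single coupling is incremented by one.

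First I would invoke the chain rule. Since $p_N(\beta,\bc)=\E_{N,\bc}[\p_N(\beta\bJ)]$ is a smooth function of the finitely many parameters $c_{ij}$ (smoothness follows because $\p_N(\beta\bJ)$ grows at most like $\beta\|\bJ\|_1$ and the Poisson measures have all exponential moments, which also legitimizes differentiation under the expectation), one has
\[
\frac{d}{dt}\,p_N(\beta,\bc(t))\;=\;\sum_{i,j=1}^{N}\frac{dc_{ij}}{dt}\,\frac{\partial}{\partial c_{ij}}p_N(\beta,\bc(t)).
\]
So it suffices to compute each $\partial p_N/\partial c_{ij}$.

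Next I would apply \eqref{eq:PoissonSBP} coordinate-wise. Since $J_{ij}$ are independent Poissons with means $c_{ij}/(2N)$, the chain rule (with the internal factor $1/(2N)$) together with \eqref{eq:PoissonSBP} gives
\[
\frac{\partial}{\partial c_{ij}}\,\E_{N,\bc}[g(\bJ)]\;=\;\frac{1}{2N}\,\E_{N,\bc}\bigl[g(\bJ+E_{ij})-g(\bJ)\bigr],
\]
where $E_{ij}$ denotes the matrix with a $1$ in position $(i,j)$ and $0$ elsewhere. Taking $g(\bJ)=\p_N(\beta\bJ)=\tfrac{1}{N}\ln Z_N(\beta\bJ)$ yields
\[
\frac{\partial}{\partial c_{ij}}\,p_N(\beta,\bc)\;=\;\frac{1}{2N^2}\,\E_{N,\bc}\!\left[\ln\frac{Z_N(\beta(\bJ+E_{ij}))}{Z_N(\beta\bJ)}\right].
\]

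The third step is to identify the ratio of partition functions as a Boltzmann-Gibbs expectation. Since $H_N(\s,\beta(\bJ+E_{ij}))=H_N(\s,\beta\bJ)+\beta\delta(\s_i,\s_j)$, the ratio equals $\langle e^{-\beta\delta(\s_i,\s_j)}\rangle_{N,\beta\bJ}$. Because $\delta(\s_i,\s_j)\in\{0,1\}$, the elementary identity $e^{-\beta\delta}=1-(1-e^{-\beta})\delta$ gives
\[
\frac{Z_N(\beta(\bJ+E_{ij}))}{Z_N(\beta\bJ)}\;=\;1-(1-e^{-\beta})\,\langle\delta(\s_i,\s_j)\rangle_{N,\beta\bJ}.
\]
Combining the three steps produces exactly the claimed formula.

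The only substantive thing to justify carefully is the interchange of $\partial/\partial c_{ij}$ with the infinite sum defining $\E_{N,\bc}$. I would handle this by noting that $|\p_N(\beta\bJ)|\le \beta\sum_{k,\ell}J_{k\ell}+\ln q$, and since all entries of $\bJ$ are Poisson with means bounded uniformly on compact $t$-intervals, the family $\{\p_N(\beta\bJ)\pi_{c_{ij}/(2N)}(\bJ):c_{ij}\in K\}$ is dominated by a summable function for $K$ compact; this lets one differentiate term by term. This is the main technical point; everything else is an algebraic identity.
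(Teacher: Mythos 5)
Your proposal is correct and follows essentially the same route as the paper's proof: chain rule over the entries $c_{ij}$, Poisson summation by parts to produce the increment $J_{ij}\to J_{ij}+1$, identification of the partition-function ratio with $\langle e^{-\beta\delta(\s_i,\s_j)}\rangle_{N,\beta\bJ}$, and the identity $e^{-\beta\delta}=1-(1-e^{-\beta})\delta$. The only difference is that you supply the domination argument justifying differentiation under the expectation, which the paper omits as routine.
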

\begin{proof}
This follows from a well-known calculation which we include for the benefit of the reader.
By (\ref{eq:PoissonSBP}) and the chain rule,
$$
\frac{d}{dt}\, \E_{N,\bc(t)}[\p_N(\beta\bJ)]\, 
=\,  \frac{1}{2N}\, \sum_{i,j=1}^{N}  \frac{dc_{ij}}{dt}\, \E_{N,\bc(t)}\left[\p_N(\beta\bJ)\Big|_{J_{ij} \to J_{ij}+1} - \p_N(\beta\bJ)\right]\, ,
$$
But since $\p_N(\beta \bJ) = N^{-1} \ln Z_N(\beta \bJ)$,
$$
\p_N(\beta\bJ)\Big|_{J_{ij} \to J_{ij}+1} - \p_N(\beta\bJ)\, 
=\, \frac{1}{N}\, \ln \left(\frac{Z_N(\beta\bJ)\Big|_{J_{ij} \to J_{ij}+1}}{Z_N(\beta\bJ)}\right)\,
=\, \frac{1}{N}\, \ln \left\langle e^{-\beta \delta(\s_i,\s_j)}\right\rangle_{N,\beta\bJ}\, .
$$
Using the fact that $e^{-\beta \delta(\s_i,\s_j)} = 1 - \left(1-e^{-\beta}\right) \delta(\s_i,\s_j)$, this gives the desired result.
\end{proof}

The first corollary is existence of the thermodynamic limit. 
\begin{corollary}
\label{cor:superadd}
For $\beta,c \geq 0$, and any $N_1,N_2 \in \N$,
\begin{equation}
\label{eq:pNsuperadd}
p_{N_1+N_2}(\beta,c)\, \geq\, \frac{N_1}{N_1+N_2}\, p_{N_1}(\beta,c) + \frac{N_2}{N_1+N_2}\, p_{N_2}(\beta,c)\, .
\end{equation}
\end{corollary}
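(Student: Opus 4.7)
The plan is to introduce a one-parameter interpolation $t \mapsto \bc(t)$ on the $N = N_1+N_2$ system that connects the decoupled configuration (at $t=0$) to the fully coupled uniform matrix (at $t=1$), and to verify via Lemma~\ref{lem:interpo} that the pressure is non-decreasing in $t$. Writing $B_1 = \{1,\dots,N_1\}$ and $B_2 = \{N_1+1,\dots,N\}$, I would take
\[
c_{ij}(t) = \begin{cases} tc + (1-t)\,cN/N_1 & (i,j) \in B_1\times B_1,\\[2pt] tc + (1-t)\,cN/N_2 & (i,j) \in B_2\times B_2,\\[2pt] tc & \text{otherwise.}\end{cases}
\]
Then $\bc(1)$ is the constant matrix $c$, so $p_N(\beta,\bc(1))=p_N(\beta,c)$. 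At $t=0$ the cross-block entries vanish and the within-block Poisson rates equal $c/(2N_\ell)$, so $Z_N(\beta\bJ)$ factorises as an independent product over $B_1$ and $B_2$, giving $p_N(\beta,\bc(0)) = (N_1/N)\,p_{N_1}(\beta,c)+(N_2/N)\,p_{N_2}(\beta,c)$. The corollary therefore reduces to $\tfrac{d}{dt}p_N(\beta,\bc(t)) \geq 0$ for $t\in(0,1)$.

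To establish monotonicity I would expand the logarithm appearing in Lemma~\ref{lem:interpo}. Since $(1-e^{-\beta})\langle \delta(\s_i,\s_j)\rangle \leq 1-e^{-\beta} < 1$ uniformly in the disorder, the series
\[
-\ln\!\big(1-(1-e^{-\beta})\langle \delta(\s_i,\s_j)\rangle\big) \;=\; \sum_{k=1}^\infty \frac{(1-e^{-\beta})^k}{k}\,\langle \delta(\s_i,\s_j)\rangle^k
\]
converges uniformly, and $\langle \delta(\s_i,\s_j)\rangle^k$ equals the $k$-replica expectation $\langle\prod_{r=1}^k \delta(\s_i^{(r)},\s_j^{(r)})\rangle$. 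Interchanging the series with $\big\langle\hspace{-3pt}\big\langle\cdot\big\rangle\hspace{-3pt}\big\rangle_{N,\beta,\bc(t)}$, justified by dominated convergence, I obtain
\[
\frac{d}{dt}\,p_N(\beta,\bc(t)) \;=\; \frac{1}{2N^2}\sum_{k=1}^\infty \frac{(1-e^{-\beta})^k}{k}\,\big\langle\hspace{-3pt}\big\langle \Phi_k\big\rangle\hspace{-3pt}\big\rangle_{N,\beta,\bc(t)},
\]
where
\[
\Phi_k(\s^{(1)},\dots,\s^{(k)}) \;:=\; -\sum_{i,j=1}^N \frac{dc_{ij}(t)}{dt}\prod_{r=1}^k \delta(\s_i^{(r)},\s_j^{(r)}).
\]
All the prefactors being nonnegative, it suffices to prove $\Phi_k \geq 0$ deterministically.

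This last step is an AM-GM computation. Encoding each vertex's color history by $\chi_i := (\s_i^{(1)},\dots,\s_i^{(k)}) \in [q]^k$ and setting $N_\alpha^{B_\ell} = |\{i\in B_\ell : \chi_i=\alpha\}|$ for $\alpha\in[q]^k$, the identity $\prod_r \delta(\s_i^{(r)},\s_j^{(r)}) = \delta(\chi_i,\chi_j)$, together with the explicit values $dc_{ij}/dt = -cN_2/N_1,\,-cN_1/N_2,\,c$ on $B_1^2,\,B_2^2$, and the cross-blocks respectively, give
\[
\Phi_k \;=\; c\sum_{\alpha\in[q]^k}\!\left[\frac{N_2}{N_1}(N_\alpha^{B_1})^2 + \frac{N_1}{N_2}(N_\alpha^{B_2})^2 - 2\,N_\alpha^{B_1}N_\alpha^{B_2}\right] \;=\; c\sum_{\alpha\in[q]^k}\!\left(\sqrt{\tfrac{N_2}{N_1}}\,N_\alpha^{B_1} - \sqrt{\tfrac{N_1}{N_2}}\,N_\alpha^{B_2}\right)^{\!2} \;\geq\; 0.
\]
Combining the three steps proves the corollary.

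The main point to guard against is the choice of the within-block coefficients $cN/N_\ell$ defining $\bc(0)$: they must be tuned simultaneously so that $\bc(0)$ produces an honest decoupled system with parameter $c$ on each block \emph{and} so that the signs of $dc_{ij}/dt$ conspire to produce the AM-GM identity for every $k\geq 1$ at once. The analytic sub-point, justifying the interchange of $\sum_k$ with the expectations, is routine thanks to the uniform bound $1-e^{-\beta}<1$.
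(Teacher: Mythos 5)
Your proposal is correct and is essentially the paper's own argument: the paper interpolates between the same two coupling matrices (with the parametrization reversed, so the derivative comes out with a definite negative sign), expands the logarithm from Lemma~\ref{lem:interpo} into the replica series, and identifies the resulting quadratic form as $-\frac{N_1N_2}{2N^2}\sum_R\frac{(1-e^{-\beta})^R}{R}\sum_s\langle\hspace{-2pt}\langle(\rho^A_{\Sigma_R}(s)-\rho^B_{\Sigma_R}(s))^2\rangle\hspace{-2pt}\rangle$, which is your completed square $\bigl(\sqrt{N_2/N_1}\,N^{B_1}_\alpha-\sqrt{N_1/N_2}\,N^{B_2}_\alpha\bigr)^2$ rewritten in terms of the block empirical measures. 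Your endpoint identification and sign bookkeeping all check out, so there is nothing to add.
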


This states that the sequence $(p_N(\beta,c))_{N\in\N}$ is superadditive.
We will prove this in Section \ref{sec:ProofsInterpolation}. 
Let us now state an inequality for superadditive sequences.
\begin{lemma}
\label{lem:superadd}
If $(x_N)_{N \in \N}$ satisfies $(M+N) x_{M+N} \geq M x_M + N x_N$ for all $M,N \in \N$,
then
$$
\liminf_{N \to \infty} x_N\, =\, \limsup_{N \to \infty} x_N\, =\, \sup_{N \in \N} x_N\, =\, \limsup_{N \to \infty} \liminf_{M \to \infty} \frac{(M+N) x_{M+N} - M x_M}{N}\, .
$$
\end{lemma}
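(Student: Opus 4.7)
The plan is to reduce everything to Fekete's lemma applied to $y_N := N x_N$, which by hypothesis is superadditive: $y_{M+N} \geq y_M + y_N$. By the standard Fekete argument, $\lim_{N\to\infty} y_N/N = \sup_{N\in\N} y_N/N \in \R \cup \{+\infty\}$, which translates immediately into $\liminf_N x_N = \limsup_N x_N = \sup_N x_N$. Call this common value $L$. This settles the first three equalities.

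For the fourth equality, abbreviate $f(M,N) := \frac{(M+N) x_{M+N} - M x_M}{N} = \frac{y_{M+N} - y_M}{N}$. The lower bound $\limsup_N \liminf_M f(M,N) \geq L$ is immediate from superadditivity: $y_{M+N} - y_M \geq y_N$ gives $f(M,N) \geq x_N$ for all $M$, hence $\liminf_M f(M,N) \geq x_N$, and the outer $\limsup_N$ on the right-hand side then gives $\geq \sup_N x_N = L$.

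The matching upper bound is where the content lies. I would assume $L < \infty$ (the case $L = +\infty$ is settled by the lower bound alone). Set $\delta_k := kL - y_k$. Superadditivity of $y$ rephrases as subadditivity $\delta_{M+N} \leq \delta_M + \delta_N$; the bound $y_k/k \leq L$ gives $\delta_k \geq 0$; and $x_k \to L$ gives $\delta_k/k \to 0$. A direct rearrangement yields
$$f(M,N)\, =\, L\, +\, \frac{\delta_M - \delta_{M+N}}{N}\, ,$$
so the upper bound reduces to the claim that for each fixed $N$, $\liminf_{M\to\infty} (\delta_M - \delta_{M+N}) \leq 0$.

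This last inequality is the only mildly nontrivial ingredient and is the step I expect to be the main (though still elementary) obstacle. I would prove it by contradiction: if $\delta_M - \delta_{M+N} \geq \epsilon > 0$ held for all $M \geq M_0$, then iterating along the arithmetic progression $M_0,\, M_0 + N,\, M_0 + 2N, \ldots$ would give $\delta_{M_0} \geq \delta_{M_0 + kN} + k\epsilon \geq k\epsilon$ for every $k$, contradicting $\delta_{M_0} < \infty$. Granted this, $\liminf_M f(M,N) \leq L$ for every $N$, so $\limsup_N \liminf_M f(M,N) \leq L$, finishing the proof.
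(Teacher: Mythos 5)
Your proof is correct and follows essentially the same route as the paper: Fekete's lemma for the first three equalities, the trivial superadditivity bound $f(M,N)\ge x_N$ for the lower bound on the iterated limit, and a telescoping argument along the arithmetic progression $M_0,\,M_0+N,\,M_0+2N,\dots$ for the matching upper bound. Your repackaging of that last step via the nonnegative deviations $\delta_k=k(L-x_k)$ and a proof by contradiction is a clean equivalent of the paper's Cesàro-average-versus-infimum computation.
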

The first part of this lemma is a result due to Fekete.
The last equality follows from an argument in \cite{aizenman2003extended}.
It will be useful later.
We will review the proof in Section \ref{sec:ProofsInterpolation}.

Let us now introduce an important function, which is called the {\it annealed} pressure
\begin{equation}
\label{eq:annealedpressure}
\mathscr{P}(\beta,c)\, =\, \ln q + \frac{c}{2}\, \ln\left(1 - \frac{1-e^{-\beta}}{q}\right)\, .
\end{equation}
We call it annealed with a slightly different meaning than the spin glass case,
as it will be clear in the following.
This function provides an upper bound for the quenched pressure $p_N(\beta,c)$ for every $N \in \N$, as we will show next.
In order to state the precise result,
recall that $\Sigma_R = (\sigma^{(1)},\dots,\sigma^{(R)}) \in [q]^{N \times R}$ is a notation gathering $R$ replicas.
Given $\Sigma_R$, let us define
the $R$-replica empirical measure on $[q]^R$:
\begin{equation}
\label{eq:qRreplica}
\rho_{\Sigma_R}(s)\, =\, \frac{1}{N}\, \sum_{i=1}^{N} \prod_{r=1}^{R}\delta(\s^{(r)}_i,s_r)\quad \text{ for }\quad
s = (s_1,\dots,s_R) \in [q]^R\, .
\end{equation}

\begin{theorem} 
\label{thm:Annealed}
For every $\beta,c \geq 0$,
\begin{equation}
\label{eq:SumRule}
\mathscr{P}(\beta,c) - p_N(\beta,c)\, =\, \frac{1}{2} \sum_{R=0}^{\infty} \frac{(1-e^{-\beta})^R}{R}\, \sum_{s \in [q]^R} 
\int_0^c \left\langle\hspace{-4pt}\left\langle \left(\rho_{\Sigma_R}(s) - q^{-R}\right)^2 \right\rangle \hspace{-4pt}\right\rangle_{N\beta,c'}\, dc'\, .
\end{equation}
\end{theorem}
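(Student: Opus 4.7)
The plan is to use the interpolation lemma (Lemma \ref{lem:interpo}) along the straight-line path $c_{ij}(t)=t$ for $t\in[0,c]$, integrate from $0$ to $c$, and match the constant part of the integrand to $\frac{d}{dc}\mathscr{P}(\beta,c)$ while the fluctuation part produces the series on the right-hand side. At the endpoint $t=0$ all couplings are a.s.\ zero, so $Z_N=q^N$ and $p_N(\beta,0)=\ln q$, which we will pair with the $\ln q$ in $\mathscr{P}(\beta,c)$.

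First I would apply Lemma \ref{lem:interpo} with $dc_{ij}/dt = 1$ to get
\[
\frac{d}{dt}\,p_N(\beta,t)\, =\, \frac{1}{2N^2}\sum_{i,j=1}^{N}\E_{N,t}\!\left[\ln\!\left(1-(1-e^{-\beta})\langle\delta(\s_i,\s_j)\rangle_{N,\beta\bJ}\right)\right].
\]
Since $(1-e^{-\beta})\langle\delta(\s_i,\s_j)\rangle\in[0,1-e^{-\beta})\subset[0,1)$, the Taylor expansion $\ln(1-x)=-\sum_{R\ge1}x^R/R$ converges absolutely and uniformly in $\bJ$, so I can exchange sum and expectation. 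This yields
\[
\frac{d}{dt}\,p_N(\beta,t)\, =\, -\frac{1}{2}\sum_{R=1}^{\infty}\frac{(1-e^{-\beta})^R}{R}\cdot\frac{1}{N^2}\sum_{i,j=1}^{N}\bigl\langle\hspace{-3pt}\bigl\langle\langle\delta(\s_i,\s_j)\rangle^R\bigr\rangle\hspace{-3pt}\bigr\rangle_{N,\beta,t}.
\]

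Next I would use the replica representation $\langle\delta(\s_i,\s_j)\rangle^R = \langle\prod_{r=1}^R\delta(\s^{(r)}_i,\s^{(r)}_j)\rangle$ together with the identity $\prod_{r=1}^R\delta(\s^{(r)}_i,\s^{(r)}_j)=\sum_{s\in[q]^R}\prod_{r=1}^R\delta(\s^{(r)}_i,s_r)\delta(\s^{(r)}_j,s_r)$ to recognize, via definition \eqref{eq:qRreplica}, that
\[
\frac{1}{N^2}\sum_{i,j=1}^{N}\langle\delta(\s_i,\s_j)\rangle^R\, =\, \sum_{s\in[q]^R}\bigl\langle\rho_{\Sigma_R}(s)^2\bigr\rangle.
\]
Now I recenter: writing $\rho_{\Sigma_R}(s)=q^{-R}+(\rho_{\Sigma_R}(s)-q^{-R})$ and using $\sum_{s\in[q]^R}\rho_{\Sigma_R}(s)=1$ (so the cross term vanishes), I obtain
\[
\sum_{s\in[q]^R}\rho_{\Sigma_R}(s)^2\, =\, q^{-R}+\sum_{s\in[q]^R}(\rho_{\Sigma_R}(s)-q^{-R})^2.
\]
Substituting back, the constant part contributes
\[
-\frac{1}{2}\sum_{R=1}^{\infty}\frac{1}{R}\!\left(\frac{1-e^{-\beta}}{q}\right)^{\!R} = \frac{1}{2}\ln\!\left(1-\frac{1-e^{-\beta}}{q}\right) = \frac{d}{dt}\mathscr{P}(\beta,t),
\]
while the fluctuation part contributes exactly $-\tfrac{1}{2}\sum_R\tfrac{(1-e^{-\beta})^R}{R}\sum_s\langle\langle(\rho_{\Sigma_R}(s)-q^{-R})^2\rangle\rangle_{N,\beta,t}$.

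To finish, I integrate the identity $\tfrac{d}{dt}[\mathscr{P}(\beta,t)-p_N(\beta,t)]=\tfrac{1}{2}\sum_R\tfrac{(1-e^{-\beta})^R}{R}\sum_s\langle\langle(\rho_{\Sigma_R}(s)-q^{-R})^2\rangle\rangle_{N,\beta,t}$ from $t=0$ to $t=c$, using $\mathscr{P}(\beta,0)=\ln q = p_N(\beta,0)$ to kill the boundary terms and applying Fubini to bring the integral inside the $R$-sum (justified again by the uniform bound $(\rho_{\Sigma_R}(s)-q^{-R})^2\le 1$ and $|1-e^{-\beta}|<1$). The only real subtlety in the argument is the justification of these interchanges of sum, expectation, and integral; once the absolute convergence is noted, everything else is algebraic bookkeeping.
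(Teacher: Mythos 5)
Your proposal is correct and follows essentially the same route as the paper's proof: expand the logarithm in Lemma \ref{lem:interpo} as a power series, rewrite $\frac{1}{N^2}\sum_{i,j}\langle\delta(\s_i,\s_j)\rangle^R$ via replicas as $\sum_{s}\langle\rho_{\Sigma_R}(s)^2\rangle$, recenter using $\sum_s\rho_{\Sigma_R}(s)=1$ to peel off the annealed derivative, and integrate from $0$ to $c$ using $p_N(\beta,0)=\mathscr{P}(\beta,0)=\ln q$. Your version is if anything slightly more careful than the paper's, since you explicitly justify the interchanges of sum, expectation, and integral.
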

As a particular implication, note that $p_N(\beta,c) \leq \mathscr{P}(\beta,c)$ for all $N$.
Along with Corollary \ref{cor:superadd} and Lemma \ref{lem:superadd}, this implies that the thermodynamic pressure
exists as a finite limit
$$
p(\beta,c)\, \stackrel{\text{def}}{:=}\, \lim_{N \to \infty} p_N(\beta,c)\, ,
$$
and it satisfies $p(\beta,c) \leq \mathscr{P}(\beta,c)$.

\begin{remark}
When it is necessary to explicitly denote the dependence on $q$ we will write $p(\beta,c,q)$ and $\mathscr{P}(\beta,c,q)$.
\end{remark}

The explicit formula for $\mathscr{P}(\beta,c)-p(\beta,c)$ is relevant when trying to determine the {\em annealed region}: the parameter space for $(\beta,c) \in [0,\infty)^2$ such that the 
inequality is saturated,
$p(\beta,c) = \mathscr{P}(\beta,c)$.

The final application of interpolation is the analogue of Guerra replica symmetry breaking bounds \cite{guerra}.
We introduce this in the next section in order to give the full definition of the random spin structure which aids in understanding those inequalities. See  \cite{PanchenkoTalagrand}, \cite{BovierKlimovsky}, \cite{talagrand2006parisi} \cite{ArguinChatterjee} for
similar results.
%
%
We include proofs for the benefit of the reader in the Appendix.

Before ending this section let us note another elementary corollary which is useful in the next section.
\begin{corollary}
\label{cor:LipGeneralp}
Suppose that $\bc^{(1)}$ and $\bc^{(2)}$ are both in $\M_N([0,\infty))$.
Then, 
$$
\left|p_N(\beta,\bc^{(2)}) 
- p_N(\beta,\bc^{(1)})\right|\,
\leq\, \frac{\beta}{2N^2}\, \sum_{i,j=1}^{N} \left|c_{ij}^{(2)} - c_{ij}^{(1)}\right|\, .
$$
In particular, for two different numbers $c_1, c_2 \geq 0$, we have 
$$
\left|p_N(\beta,c_2) 
- p_N(\beta,c_2)\right|\, \leq\, \frac{1}{2}\, \beta|c_2-c_1|\, .
$$
\end{corollary}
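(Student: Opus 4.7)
The plan is to apply Lemma \ref{lem:interpo} to a straight-line interpolation between the two coupling matrices. Concretely, define
\[
\bc(t)\, =\, (1-t)\bc^{(1)} + t\bc^{(2)}\, ,\qquad t \in [0,1]\, ,
\]
so that $\bc(t) \in \M_N([0,\infty))$ for all $t$ and $dc_{ij}/dt = c_{ij}^{(2)} - c_{ij}^{(1)}$. Since $p_N(\beta,\bc(t))$ is differentiable in $t$ by Lemma \ref{lem:interpo}, the desired inequality will follow from the fundamental theorem of calculus
\[
p_N(\beta,\bc^{(2)}) - p_N(\beta,\bc^{(1)})\, =\, \int_0^1 \frac{d}{dt}\, p_N(\beta,\bc(t))\, dt\,
\]
together with a uniform bound on the integrand.

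The key step is to bound the logarithmic factor appearing in Lemma \ref{lem:interpo}. For any realization of $\bJ$ and any $i,j$, the Boltzmann-Gibbs expectation $\langle \delta(\s_i,\s_j)\rangle_{N,\beta\bJ}$ lies in $[0,1]$, and for $\beta \geq 0$ one has $1 - e^{-\beta} \in [0,1]$. Therefore
\[
1 - (1-e^{-\beta}) \langle \delta(\s_i,\s_j)\rangle_{N,\beta\bJ}\, \in\, [e^{-\beta},1]\, ,
\]
which gives the pointwise bound
\[
\left|\ln\!\left(1-(1-e^{-\beta})\langle\delta(\s_i,\s_j)\rangle_{N,\beta\bJ}\right)\right|\, \leq\, \beta\, .
\]
Combining this with the formula from Lemma \ref{lem:interpo} and the triangle inequality, I get
\[
\left|\frac{d}{dt}\, p_N(\beta,\bc(t))\right|\, \leq\, \frac{\beta}{2N^2}\, \sum_{i,j=1}^{N} \left|c_{ij}^{(2)} - c_{ij}^{(1)}\right|\, ,
\]
uniformly in $t\in[0,1]$. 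Integrating in $t$ yields the first inequality of the corollary.

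For the second statement, I specialize to $\bc^{(1)} = c_1\, \boldsymbol{1}$ and $\bc^{(2)} = c_2\, \boldsymbol{1}$ (all entries equal). The double sum becomes $N^2 |c_2 - c_1|$, and the first bound reduces to $\tfrac{1}{2}\beta|c_2-c_1|$. I expect no real obstacle here: the only non-mechanical input is the elementary bound on the logarithm, and everything else is bookkeeping from the already-established interpolation identity.
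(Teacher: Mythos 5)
Your proof is correct and follows essentially the same route as the paper: interpolate linearly between $\bc^{(1)}$ and $\bc^{(2)}$, bound the $t$-derivative uniformly by $\frac{\beta}{2N^2}\sum_{i,j}|c_{ij}^{(2)}-c_{ij}^{(1)}|$, and integrate. The only cosmetic difference is that you bound the logarithm directly from Lemma \ref{lem:interpo} (using that its argument lies in $[e^{-\beta},1]$), whereas the paper invokes the series-expanded form (\ref{eq:interpo2}); the two bounds are identical.
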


\section{Extended Variational Principle}
\label{sec:EVP}
\label{Extended Variational Principle}

We follow here the method introduced by
Aizenman, Sims and Starr in \cite{aizenman2003extended,ASS2}.
We start this section by defining a discrete random spin structure.
The definition comes from the physicists' cavity step,
as defined by Franz and Leone \cite{FranzLeone}.
%
%
Recall that with the usual topology $[0,1]$ is compact.
Let $[0,1]^{\N}$ be the set of all 
$\bxi = (\xi_1,\xi_2,\dots)$ such that each $\xi_i \in [0,1]$.
With the product topology, $[0,1]^{\N}$ is also compact,
and metrizable.
For example, a metric compatible with the product topology is 
$d(\bxi,\boldsymbol{\zeta}) = \sum_{n=1}^{\infty} 2^{-n} |\xi_n-\zeta_n|$.

Let $\Delta$ denote the subset consisting of those 
$\bxi \in [0,1]^{\N}$
satisfying the additional conditions
$$
\xi_1\, \geq\, \xi_2\, \geq\,  \dots\quad \text{ and } \quad
\xi_1 + \xi_2 + \dots\, \leq\, 1\, .
$$
This is a closed set, hence also compact.

We also define $\Delta_1$ to be the subset consisting of those $\bxi \in \Delta$
such that $\sum_{n=1}^{\infty}\xi_n = 1$.
This is not a closed set, but it is a Borel set: $\bigcap_{m=1}^{\infty} \bigcup_{n=1}^{\infty} \{\sum_{k=1}^{n} \xi_k \geq 1-m^{-1}\}$.

Let $q^{\N}$ refer to the set of $\tau = (\tau_1,\tau_2,\dots)$ with each $\tau_n \in [q]$.
Let $q^{\N\times \N}$ refer to the set of all $\mathcal{T} = (\tau^{(1)},\tau^{(2)},\dots)$ with each $\tau^{(n)} \in q^{\N}$.
With the product topology, $q^{\N \times \N}$ is also compact and metrizable.

Finally, let $S_{\infty}$ denote the set of all bijections $\pi : \N \to \N$ such that $\{n\, :\, \pi(n)\neq n\}$ is finite.
Given $\tau \in q^{\N}$ and $\pi \in S_{\infty}$, we defined $\tau \circ \pi \in q^{\N}$
such that $(\tau \circ \pi)_n = \tau_{\pi(n)}$.

\begin{definition}
\label{def:Kingman}
(a)
Let $\mathscr{M}$ denote the set of all Borel probability measures on $\Delta \times q^{\N \times \N}$.\\
(b) Let $\mathscr{S}$ denote the subset of all $\mathcal{L} \in \mathscr{M}$ satisfying additional
hypotheses:
\begin{itemize}
\item[(i)] $\mathcal{L}(\{(\bxi,\mathcal{T})\, :\, \bxi \in \Delta_1\})\, =\, 1$,
\item[(ii)]
For any $\pi \in S_{\infty}$, and any Borel subset $A \subseteq \Delta \times q^{\N\times \N}$
$$
\mathcal{L}(\{(\bxi,\mathcal{T})\, :\, (\bxi,(\tau^{(1)}\circ \pi,\tau^{(2)}\circ \pi,\dots)) \in A\})\, 
=\, \mathcal{L}(A)\, .
$$
\end{itemize}
\end{definition}
The set of all discrete random spin structures is $\mathscr{S}$.
In Section \ref{sec:Cavity} we will discuss a generalization of this definition which represents a compactification.
But for now, we define the cavity field functions.

Given $k$, let $\bI \in \{1,\dots,N\}^k$ denote $(I_1,\dots,I_k)$.
Let us denote the union
$$
\mathcal{I}_N\, \stackrel{\text{def}}{:=}\, \bigcup_{k=0}^{\infty} \{1,\dots,N\}^k\, .
$$
Given $\bI \in \mathcal{I}_N$, we define $|\bI|$ to be that integer $k \in \N_0$
such that $\bI \in \{1,\dots,N\}^k$.
Note that for $k=0$, we just denote $\bI$ to be a placeholder $\emptyset$.
We define a probability measure on this space
$$
\widetilde{\P}_{N,c}(A)\, =\, \sum_{k=0}^{\infty} \frac{\pi_{cN}(k)}{N^k} \sum_{\bI \in \{1,\dots,N\}^k} \boldsymbol{1}_A(\bI)\, .
$$
Let $\widetilde{\E}_{N,c}$ be the associated expectation.
We also define a Hamiltonian
$$
\widetilde{H}_N : \mathcal{I}_N \times [q]^{\N} \times [q]^N \to \R\, ,\qquad
\widetilde{H}_N(\bI,\tau,\sigma)\, =\, \sum_{i=1}^{|\bI|} \delta(\tau_i,\sigma_{I_i})\, .
$$
For $\bI=\emptyset$, we have $|\bI|=0$ and the empty sum is interpreted as zero.
With all of this set-up, we define the ``interaction'' term of the  cavity field function to be
\begin{equation}
\label{eq:GN1}
G_N^{(1)}(\beta,c,\mathcal{L})\,
=\, \int_{\Delta_1 \times [q]^{\N\times \N}} \widetilde{\E}_{N,c}\left[
\frac{1}{N} \ln \sum_{\alpha=1}^{\infty} \xi_{\alpha} \sum_{\s \in [q]^N} \exp\left(-\beta \widetilde{H}_N(\bI,\tau^{(\alpha)},\sigma)\right)\right]\,  d\mathcal{L}(\bxi,\mathcal{T})\, .
\end{equation}
The ``reaction'' (or self-energy) term is 
\begin{equation}
\label{eq:GN2}
G^{(2)}_{N}(\beta,c,\mathcal{L})\,
=\, \int_{\Delta_1 \times [q]^{\N\times \N}} \sum_{K=0}^{\infty} \frac{\pi_{cN/2}(K)}{N}\, \ln\left( \sum_{\alpha=1}^{\infty} \xi_{\alpha} \exp\left(-\beta \sum_{k=1}^{K} \delta(\tau^{(\alpha)}_{2k-1},\tau^{(\alpha)}_{2k})\right)\right)\, 
d\mathcal{L}(\bxi,\mathcal{T})\, .
\end{equation}
The analogue of Guerra's replica symmetry breaking bounds (\cite{guerra}) are the following.
\begin{theorem}
\label{thm:RSBbd}
For any $\beta,c\geq 0$ and $N \in \N$, and for any $\mathcal{L} \in \mathscr{S}$,
$$
p_N(\beta,c)\, \leq\, G_N^{(1)}(\beta,c,\mathcal{L}) - G_N^{(2)}(\beta,c,\mathcal{L})\, .
$$
\end{theorem}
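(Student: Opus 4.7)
My plan is to prove this bound, which is in the spirit of the Aizenman--Sims--Starr extended variational principle, by a Franz--Leone-type interpolation: construct a one-parameter functional $\Phi_N(t)$, $t\in[0,1]$, equal to $p_N(\beta,c)$ at $t=1$ and to $G_N^{(1)}(\beta,c,\mathcal{L})-G_N^{(2)}(\beta,c,\mathcal{L})$ at $t=0$, and show it is monotone non-increasing.

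\emph{Setup.} Sample three independent families of Poisson disorder whose intensities interpolate between the original model and the cavity model: internal bulk couplings $H^{\rm int}_t(\sigma)=\sum_k \delta(\sigma_{i_k},\sigma_{j_k})$ of total Poisson intensity $tcN/2$ with i.i.d.\ uniform indices $i_k,j_k\in\{1,\dots,N\}$; cavity--bulk couplings $H^{\rm cb}_{1-t}(\sigma,\tau)=\sum_k \delta(\sigma_{I_k},\tau_k)$ of total Poisson intensity $(1-t)cN$ with i.i.d.\ uniform bulk indices $I_k$ and distinct cavity positions; and cavity self-couplings $H^{\rm cc}_{1-t}(\tau)=\sum_k \delta(\tau_{2k-1},\tau_{2k})$ of total Poisson intensity $(1-t)cN/2$. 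Let $\E$ denote the joint expectation over these Poisson families, and define
\[
Z(t)=\sum_{\alpha=1}^\infty\xi_\alpha\sum_{\sigma\in[q]^N}\exp\bigl(-\beta H^{\rm int}_t(\sigma)-\beta H^{\rm cb}_{1-t}(\sigma,\tau^{(\alpha)})\bigr),\quad Z'(t)=\sum_{\alpha=1}^\infty\xi_\alpha\exp\bigl(-\beta H^{\rm cc}_{1-t}(\tau^{(\alpha)})\bigr),
\]
\[
\Phi_N(t)=\int_{\Delta_1\times[q]^{\N\times\N}}\frac{1}{N}\,\E\bigl[\ln Z(t)-\ln Z'(t)\bigr]\,d\mathcal{L}(\bxi,\mathcal{T}).
\]
At $t=1$ both cavity Hamiltonians are empty and $\sum_\alpha\xi_\alpha=1$ on $\Delta_1$, giving $\Phi_N(1)=p_N(\beta,c)$. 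At $t=0$ the internal Hamiltonian is empty and one recognizes $\frac{1}{N}\E\ln Z(0)=G_N^{(1)}(\beta,c,\mathcal{L})$ and $\frac{1}{N}\E\ln Z'(0)=G_N^{(2)}(\beta,c,\mathcal{L})$, giving $\Phi_N(0)=G_N^{(1)}-G_N^{(2)}$.

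\emph{Derivative and monotonicity.} Compute $\Phi_N'(t)$ via Poisson summation-by-parts \eqref{eq:PoissonSBP} applied to each of the three Poisson counts: the internal contribution is exactly Lemma \ref{lem:interpo}, and the two cavity contributions follow from the analogous identity with opposite sign (since their rates decrease in $t$). Each piece is a rate derivative times an expected logarithm of a Gibbs-averaged factor $1-(1-e^{-\beta})\langle\delta(\cdot,\cdot)\rangle$. Expanding $\ln(1-ax)=-\sum_{R\ge 1}a^R x^R/R$ with $a=1-e^{-\beta}\ge 0$ and decomposing the $R$-fold Kronecker product as $\prod_r\delta(x_r,y_r)=\sum_{s\in[q]^R}\prod_r\delta(x_r,s_r)\delta(y_r,s_r)$ assembles the derivative into
\[
\Phi_N'(t)=-\frac{c}{2}\sum_{R\ge 1}\frac{(1-e^{-\beta})^R}{R}\sum_{s\in[q]^R}\mathcal{Q}_R(s,t,\mathcal{L}),
\]
where $\mathcal{Q}_R$ is a combination of three $R$-replica overlaps involving the bulk empirical $\rho_{\Sigma_R}(s)$ of \eqref{eq:qRreplica} and the cavity analogue $\nu_L(s)=\prod_r\delta(\tau^{(\alpha_r)}_L,s_r)$. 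The remaining task is to show $\mathcal{Q}_R\ge 0$; this yields $\Phi_N'(t)\le 0$ and thus $\Phi_N(1)\le\Phi_N(0)$, which is the claim.

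\emph{Main obstacle.} The non-negativity of $\mathcal{Q}_R$ is the technical core. The three pieces entering $\mathcal{Q}_R$ are averages under two a priori distinct Gibbs measures (the joint bulk/cavity measure for the internal and cavity--bulk contributions, and the cavity-only measure for the cavity--cavity contribution), and the fresh cavity positions $L,L_1,L_2$ appearing in them are formally different. The reorganization into a manifestly non-negative ``squared distance'' between bulk and cavity multi-overlaps requires the exchangeability hypothesis of Definition \ref{def:Kingman}(ii), combined with the Palm property that freshly inserted Poisson points are independent of the prior disorder, to align the three contributions. This is the direct dilute-model analogue of Guerra's sum-of-squares identity, and it is precisely where the restriction $\mathcal{L}\in\mathscr{S}$ (rather than a generic Borel measure) plays an essential role. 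The termwise handling of the $R$-series is straightforward by uniform boundedness $0\le \rho_{\Sigma_R},\nu_L\le 1$ and dominated convergence for finite $\beta,c$.
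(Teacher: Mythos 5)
Your interpolation skeleton (three Poisson families with intensities $tcN/2$, $(1-t)cN$, $(1-t)cN/2$, endpoints matching $p_N$ and $G_N^{(1)}-G_N^{(2)}$, Poisson summation by parts) is the Franz--Leone/Guerra route, and the endpoint identifications are correct. But the proof has a genuine gap exactly where you flag the ``main obstacle'': you never establish $\mathcal{Q}_R\ge 0$, and for an \emph{arbitrary} $\mathcal{L}\in\mathscr{S}$ this termwise positivity is not expected to hold along your interpolation. After expanding the three logarithms, the $R$-th term has the form $\langle \rho_{\Sigma_R}(s)^2\rangle_{Z(t)} - 2\langle \rho_{\Sigma_R}(s)\,\nu_L(s)\rangle_{Z(t)} + \langle \nu_{L_1}(s)\nu_{L_2}(s)\rangle_{Z'(t)}$, where the first two averages are over the joint bulk--cavity Gibbs measure on $(\alpha,\sigma)$ and the third is over the cavity-only measure on $\alpha$; these induce \emph{different} laws on the replica index, so there is no single measure in which to complete the square. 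Moreover, exchangeability of $\mathcal{L}$ under permutations of the cavity positions does not make $\tau^{(\cdot)}_{L_1}$ and $\tau^{(\cdot)}_{L_2}$ independent, so $\langle\nu_{L_1}\nu_{L_2}\rangle$ does not factor into the square of a single cavity overlap. This is precisely the known obstruction: the sign-definiteness of the Franz--Leone derivative relies on the quasi-stationarity of the Ruelle cascades, not on exchangeability alone, so your route would at best prove the bound for the Derrida--Ruelle structures of Section 5, not for all of $\mathscr{S}$.

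The paper circumvents this by never interpolating between the two Gibbs measures directly. It first replaces the weights $\bxi$ by uniform weights over $R$ replicas sampled i.i.d.\ from $\bxi$ (Corollary \ref{cor:Identity}, a law-of-large-numbers step), then discretizes the cavity coordinates to $M$ sites, and finally observes that $p_N+\widetilde{G}^{(2)}_{N,R,M}$ and $\widetilde{G}^{(1)}_{N,R,M}$ are both pressures of one and the same $(M+N)$-site system evaluated at two coupling matrices $\bc^{(1)},\bc^{(2)}$ whose difference is positive semidefinite. The interpolation of Corollary \ref{cor:superadd} then applies with a single Gibbs measure on the enlarged configuration space, and its derivative is a manifest sum of squares $[\rho^A_{\Sigma_R}(s)-\rho^B_{\Sigma_R}(s)]^2$. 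If you want to keep your approach, you must either restrict $\mathcal{L}$ to the cascade structures and invoke Theorem \ref{thm:stability} to restore the invariance that makes $\mathcal{Q}_R$ a square, or reorganize the argument along the paper's lines so that only one Gibbs measure appears.
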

We will prove this in the Appendix. 

In the next section we will use random spin structures coming from the Poisson-Dirichlet, Derrida-Ruelle random probability cascade.
But first, we try to motivate the present formulation by indicating how to obtain opposite bounds using the Boltzmann-Gibbs random spin structures.
It is these opposite bounds that are most closely related to the physicists' original perspective on the cavity step \cite{MezardParisiVirasoro}.


\begin{theorem}
\label{thm:EVP}
For any $\beta,c \geq 0$,
\begin{equation}
\label{eq:EVP}
\lim_{N \to \infty} p_N(\beta,c)\, =\, \lim_{N \to \infty} \inf_{\mathcal{L} \in \mathscr{S}} G_{N}(\beta,c,\mathcal{L})\, .
\end{equation}
\end{theorem}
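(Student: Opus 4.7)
The upper direction, $\liminf_N \inf_{\mathcal L\in\mathscr S} G_N(\beta,c,\mathcal L) \ge \lim_N p_N(\beta,c)$, is immediate from Theorem \ref{thm:RSBbd}, which gives $p_N(\beta,c) \le G_N^{(1)}(\beta,c,\mathcal L) - G_N^{(2)}(\beta,c,\mathcal L)$ for every $N$ and every $\mathcal L\in\mathscr S$; the inequality survives the limit since $\lim_N p_N$ exists by Corollary \ref{cor:superadd} together with Lemma \ref{lem:superadd}. The substance of the theorem is the matching lower bound, which I would prove via the Aizenman-Sims-Starr cavity argument: for each reference size $M\ge 1$ I would construct a spin structure $\mathcal L_M\in\mathscr S$ from the $M$-vertex Boltzmann-Gibbs measure and verify the identity
\begin{equation}\label{eq:plan-ASS}
G_N^{(1)}(\beta,c,\mathcal L_M) - G_N^{(2)}(\beta,c,\mathcal L_M) \;=\; \frac{(M+N) p_{M+N}(\beta,c) - M p_M(\beta,c)}{N} + \varepsilon(M,N),
\end{equation}
with $\varepsilon(M,N)\to 0$ as $M\to\infty$ for each fixed $N$. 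Granted \eqref{eq:plan-ASS}, the last equality of Lemma \ref{lem:superadd} applied to the superadditive sequence $(N p_N)$ gives
\[
\limsup_N \inf_{\mathcal L\in\mathscr S} G_N(\beta,c,\mathcal L) \;\le\; \limsup_N \liminf_M \frac{(M+N)p_{M+N}-Mp_M}{N} \;=\; \sup_N p_N(\beta,c) \;=\; \lim_N p_N(\beta,c),
\]
which closes the theorem.

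To build $\mathcal L_M$, I would sample the disorder $\bJ \sim \P_{M,c}$; enumerate $[q]^M = \{s^{(1)}, \ldots, s^{(q^M)}\}$ in decreasing order of Gibbs mass $\omega_{M,\beta\bJ}$ (with lexicographic tie-breaking and zero-padding for $\alpha > q^M$) and set $\xi_\alpha := \omega_{M,\beta\bJ}(s^{(\alpha)})$; and independently draw an i.i.d.\ uniform sequence $(\eta_n)_{n\in\N}$ in $\{1,\ldots,M\}$, setting $\tau^{(\alpha)}_n := s^{(\alpha)}_{\eta_n}$. The pushforward through $\bJ$ and $(\eta_n)$ produces a Borel probability measure on $\Delta\times q^{\N\times\N}$ that is supported on $\Delta_1\times q^{\N\times\N}$ (the $\xi_\alpha$'s sum to one a.s.) and invariant under $S_\infty$-permutations of the cavity index $n$ (by exchangeability of $(\eta_n)$), hence $\mathcal L_M \in \mathscr S$. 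Under this choice, the cavity sum $\sum_\alpha \xi_\alpha(\cdot)$ becomes an honest Gibbs expectation $\langle\,\cdot\,\rangle_{M,\beta\bJ}$ on the $M$-spin system, and substitution in the definitions of $G_N^{(1)},G_N^{(2)}$ yields
\[
N\bigl[G_N^{(1)}(\beta,c,\mathcal L_M) - G_N^{(2)}(\beta,c,\mathcal L_M)\bigr] \;=\; \E\!\left[\ln \left\langle \sum_{\sigma\in[q]^N} e^{-\beta \sum_{i=1}^{|\bI|} \delta(s_{\eta_i},\sigma_{I_i})} \right\rangle_{M,\beta\bJ}\right] - \E\!\left[\ln \left\langle e^{-\beta \sum_{k=1}^{K}\delta(s_{\eta_{2k-1}},s_{\eta_{2k}})}\right\rangle_{M,\beta\bJ}\right],
\]
with $|\bI|\sim\mathrm{Poisson}(cN)$ and $K\sim\mathrm{Poisson}(cN/2)$.

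To deduce \eqref{eq:plan-ASS}, I would compare this expression with the direct block decomposition of $\ln Z_{M+N}/Z_M$. Writing $\sigma\in[q]^{M+N}$ as $(s,\sigma')$ with $s\in[q]^M,\sigma'\in[q]^N$, and splitting the $(M+N)$-Hamiltonian into old-old, cross, and new-new blocks, one obtains $\ln Z_{M+N} - \ln Z_M = \ln(Z_M^{(M+N)}/Z_M) + \ln\langle\sum_{\sigma'}e^{-\beta(H^{\mathrm{cross}}+H^{\mathrm{nn}})}\rangle$, where $Z_M^{(M+N)}$ is the old-old restriction of $Z_{M+N}$ (with per-pair rate $c/(2(M+N))$ rather than the $M$-system's $c/(2M)$). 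The three Poisson rates appearing here (the thinning rate $cMN/(2(M+N))$ between $Z_M$ and $Z_M^{(M+N)}$, the cross rate $cMN/(M+N)$, and the new-new rate $cN^2/(2(M+N))$) differ from their counterparts in $G_N^{(1)},G_N^{(2)}$ (namely $cN/2$, $cN$, and $0$) by quantities of order $N^2/(M+N)$. Since each individual $\delta$-factor contributes at most $\beta$ to the logarithm, Lemma \ref{lem:interpo} (or the crude bound of Corollary \ref{cor:LipGeneralp}) shows that the cumulative discrepancy contributes $O(\beta N^2/M)$ to $N[G_N^{(1)}-G_N^{(2)}] - \ln(Z_{M+N}/Z_M)$ in expectation, giving $\varepsilon(M,N) = O(\beta N/M)\to 0$ as $M\to\infty$ for fixed $N$. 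The principal obstacle is precisely this four-way bookkeeping: the three rate discrepancies above plus the $O(1/M)$ difference between Gibbs averages under $Z_M$ versus $Z_M^{(M+N)}$; each contribution is individually small, but assembling them into a clean $o_M(1)$ bound requires a careful application of the derivative formula in Lemma \ref{lem:interpo} along a single interpolating path, rather than separate crude Lipschitz estimates.
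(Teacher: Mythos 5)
Your proposal is correct and follows essentially the same route as the paper: the upper bound from Theorem \ref{thm:RSBbd}, the lower bound from the Boltzmann--Gibbs spin structure built on an $M$-spin reference system (the paper's Lemma \ref{lem:EstimateEVP}, stated there as an exact identity at a tilted connectivity $c'=c/(1+\frac{N}{M})$ and matrix $\hat{\boldsymbol{c}}^{(M,N)}$ rather than as an $O(\beta N/M)$ error at connectivity $c$), followed by the $\limsup_N\liminf_M$ characterization of superadditive limits in Lemma \ref{lem:superadd}. The only difference is bookkeeping: the paper matches the Poisson rates exactly and then applies Corollary \ref{cor:LipGeneralp} once, whereas you absorb the rate mismatches directly into $\varepsilon(M,N)$; both yield the same $O(\beta c N/M)$ control.
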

We do not use this theorem for any further applications in this paper.
But its proof helps to motivate the definition of the $N$-step cavity field functionals.
The proof of the theorem will be given in the next subsection.

\subsection{Boltzmann-Gibbs Spin Structures}

Now we construct an example of a discrete random spin structure,
which we will call $\mathcal{L}_{N,\beta,c}$.
This is derived from the Boltzmann-Gibbs distribution, itself.

Let $\bJ$ be distributed according to $\P_{N,c}$.
Let $\mathcal{N} = q^N$.
Let $\s^{(1)},\dots,\s^{(\mathcal{N})}$ be any enumeration of $[q]^N$.
For $\alpha \in \{1,\dots,\mathcal{N}\}$, let
$$
\xi_{\alpha}\, =\, \omega_{N,\beta\bJ}(\s^{(\alpha)})\, .
$$
For $\alpha > \mathcal{N}$, let $\xi_{\alpha}=0$ and let $\s^{(\alpha)} \in [q]^N$
be any configuration. The choice of $\s^{(\alpha)}$ does not matter since $\xi_{\alpha}=0$.
Let $I_1,I_2,\dots \in \{1,\dots,N\}$ be i.i.d., uniformly distributed on $\{1,\dots,N\}$,
independent of $\bJ$.
For each $\alpha \in \N$, let $\tau^{(\alpha)} \in [q]^{\N}$ be 
$$
\tau^{(\alpha)}_k\, =\, \s^{(\alpha)}_{I_k}\quad \text{ for $k \in \N$.}
$$
The measure $\mathcal{L}_{N,\beta,c}$ describes the marginal distribution
of $(\bxi,\mathcal{T})$.

The key identity for proving Theorem \ref{thm:EVP} is as follows.
\begin{lemma}
\label{lem:EstimateEVP}
We have the identities
$$
G_N^{(2)}(\beta,c,\mathcal{L}_{M,\beta,c'})\,
=\, \frac{M}{N}\left(p_{M}\left(\beta,c'+\frac{cN}{M}\right) - p_M(\beta,c')\right)\, ,
$$
and
$$
G_{N,c}^{(1)}(\beta,\mathcal{L}_{M,\beta,c'})\,
=\, \frac{M+N}{N}\, p_{M+N}\left(\beta,\hat{\boldsymbol{c}}^{(M,N)}\right) - \frac{M}{N}\, p_M(\beta,c')\, ,
$$
where $p_N(\beta,\boldsymbol{c})$ for a general matrix $\bc \in \M_N([0,\infty))$ was defined in (\ref{eq:DefQuench})
and the matrix
$\hat{\boldsymbol{c}}^{(M,N)} \in \M_{M+N}([0,\infty))$ is defined as 
$$
\hat{c}^{(M,N)}_{i,j}\, 
=\, \begin{cases}
c' (1+\frac{N}{M}) & \text{ if $i,j \leq M$,}\\
c (1+\frac{N}{M}) & \text { if $i \leq M$, $j>M$ or if $j\leq M$, $i>M$,}\\
0 & \text { if $i,j > M$.}
\end{cases}
$$
\end{lemma}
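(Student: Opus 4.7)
The plan is to plug the Boltzmann--Gibbs spin structure $\mathcal{L}_{M,\beta,c'}$ directly into the definitions (\ref{eq:GN1}) and (\ref{eq:GN2}), recognize the resulting log-sums as log-ratios of partition functions of enlarged disordered systems, and then use Poisson superposition and thinning to identify the distributions of those enlarged coupling matrices with $\P_{M,c'+cN/M}$ and $\P_{M+N,\hat\bc^{(M,N)}}$ respectively. The stated formulas then follow from the algebraic identity $\tfrac{1}{N}\ln(Z_L/Z_M)=\tfrac{L}{N}\,\p_L-\tfrac{M}{N}\,\p_M$ together with (\ref{eq:DefQuench}).

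For $G_N^{(2)}$, I first note that under $\mathcal{L}_{M,\beta,c'}$ one has $\xi_\alpha=\omega_{M,\beta\bJ}(\sigma^{(\alpha)})$ for $\alpha\leq q^M$ and $\tau^{(\alpha)}_k=\sigma^{(\alpha)}_{I^{\mathrm{cav}}_k}$ with the $I^{\mathrm{cav}}_k$ i.i.d.\ uniform on $\{1,\dots,M\}$. Introducing the random matrix $\bK\in\M_M(\N_0)$ with entries $K_{ij}=|\{k : I^{\mathrm{cav}}_{2k-1}=i,\,I^{\mathrm{cav}}_{2k}=j\}|$, the argument of the logarithm becomes exactly $Z_M(\beta(\bJ+\bK))/Z_M(\beta\bJ)$. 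Poisson thinning of $K\sim\mathrm{Poisson}(cN/2)$ by the i.i.d.\ uniform labels on $\{1,\dots,M\}^2$ makes the entries $K_{ij}$ independent $\mathrm{Poisson}(cN/(2M^2))$, so by Poisson superposition $\bJ+\bK$ has the distribution $\P_{M,c'+cN/M}$. Taking $\tfrac{1}{N}\ln$ and expectations yields the first claimed identity.

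For $G_N^{(1)}$, the same substitution rewrites the inner sum as $Z_{M+N}(\beta\tilde\bJ)/Z_M(\beta\bJ)$, where $\tilde\bJ\in\M_{M+N}(\N_0)$ is the block matrix whose upper-left $M\times M$ block equals $\bJ$, whose lower-right $N\times N$ block vanishes, and whose cross block encodes the edges $\{(I^{\mathrm{cav}}_i,I_i)\}_{i=1}^{|\bI|}$ joining old vertices to cavity vertices. Since $H_{M+N}$ depends on $\tilde\bJ$ only through the symmetric sums $\tilde J_{ij}+\tilde J_{ji}$, it is enough to match those symmetric sums in distribution against $\P_{M+N,\hat\bc^{(M,N)}}$. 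A direct check of rates gives agreement in all three blocks: per symmetric pair one finds mean $c'/M$ in the upper-left block; in the cross block one has mean $c/M$ (on the BG side by thinning $|\bI|\sim\mathrm{Poisson}(cN)$ with uniform labels on $\{1,\dots,M\}\times\{1,\dots,N\}$, and on the $\hat\bc^{(M,N)}$ side by summing the two independent $\mathrm{Poisson}(c/(2M))$ upper/lower entries); and mean zero in the lower-right block. The main obstacle is purely bookkeeping: reconciling the ordered-pair convention of the Hamiltonian $H_N(\sigma,\bJ)=\sum_{i,j}J_{ij}\delta(\sigma_i,\sigma_j)$ with the symmetric Poisson construction of $\hat\bc^{(M,N)}$ and correctly tracking the three rates $c'$, $c$, $c'+cN/M$ through the thinning. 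Once these identifications are in place, both formulas drop out without any further analytic input.
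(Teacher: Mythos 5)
Your proposal is correct and follows essentially the same route as the paper: identify the log-sums as log-ratios of partition functions of enlarged systems, then use Poisson thinning and superposition to match the resulting coupling matrices with $\P_{M,c'+cN/M}$ and $\P_{M+N,\hat{\boldsymbol{c}}^{(M,N)}}$. In fact you supply the rate-matching for the $G_N^{(1)}$ cross block (including the observation that only the symmetrized sums $\tilde J_{ij}+\tilde J_{ji}$ matter) which the paper explicitly leaves as an exercise, and your bookkeeping there checks out.
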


We will prove this lemma in the Appendix.
It follows from the definitions
and 
infinite divisibility of the Poisson process.
Infinite divisibility is merely the mathematical condition related to the fact that
the Poisson random
variables admit interpolation.

The physicists' cavity step amounts to considering a very large system in equilibrium.
We will say that the size is $M$.
Then the physicists consider removing a smaller number of spins, say $N\leq M$,
which creates a cavity in the system.
But mathematically one can instead consider adding $N$ spins. (In other words
the added spins are a cavity in a system of size $M+N$.)

This has two effects. Firstly, each of the $N$ spins interacts with all the $M$ spins
in a mean-field way, i.e., in a way that represents the underlying symmetry of the model, called exchangeability. To leading order this is represented by $G_N^{(1)}$.
The simplification occurs because the leadin order terms in the interaction are linear.
In other words, for each of the $N$ spins it is as if it feels a random external magnetic
field, with the distribution of this magnetic field determined by the $M$ spins in ``equilibrium,''
and some extra random couplings.

The second effect is a reaction or self-energy term for the $M$ spins.
This is because, being a mean-field model, the parameter of the model $c$
is actually being scaled by the reciprocal of the system size.
So changing the system size amounts to a renormalization of the connectivity
from $c$ to $c(1+\frac{N}{M})$.
To leading order, the self-energy for the $M$ spins is represented by $G_N^{(2)}$
which actually does not depend on the spins $\s_1,\dots,\s_N$ at all, only the spins
in the $M$ ``equilibrium'' system.

There are other terms in the Hamiltonian,
amounting to interactions with two or more spins among the $N$ subsystem. 
But taking all these terms together still only gives a lower-order effect  which may be neglected in the thermodynamic limit.
In essence, Lemma \ref{lem:EstimateEVP} is just a calculation to show
that we have correctly interpreted the physicists' cavity step.

\begin{proofof}{\bf Proof of Theorem \ref{thm:EVP}:}
The upper bounds of Theorem \ref{thm:RSBbd} imply that
$$
p(\beta,c)\, =\, \lim_{N \to \infty} p_N(\beta,c)\, \leq\, \liminf_{N \to \infty} \inf_{\mathcal{L} \in \mathscr{S}} \left(G_{N}^{(1)}(\beta,c,\mathcal{L}) - G_{N}^{(2)}(\beta,c,\mathcal{L})\right)\, .
$$
All we need to do is to establish the opposite bound,
\begin{equation}
\label{ineq:opp}
p(\beta,c)\, \geq\, \limsup_{N \to \infty} \inf_{\mathcal{L} \in \mathscr{S}} \left(G_{N}^{(1)}(\beta,c,\mathcal{L}) - G_{N}^{(2)}(\beta,c,\mathcal{L})\right)\, .
\end{equation}
From Corollary \ref{cor:superadd} and Lemma \ref{lem:superadd},
we know that
\begin{equation}
\label{eq:pFekete}
p(\beta,c)\, =\, \limsup_{N \to \infty} \liminf_{M \to \infty} \left(\frac{M+N}{N}\, p_{M+N}(\beta,c) - 
\frac{M}{N}\, p_M(\beta,c)\right)\, .
\end{equation}
But by Lemma \ref{lem:EstimateEVP}, we know that
\begin{equation}
\label{eq:pEstimate}
G_{N,c}^{(1)}(\beta,\mathcal{L}_{M,\beta,c'}) - G_N^{(2)}(\beta,c,\mathcal{L}_{M,\beta,c'})\,
=\, \frac{M+N}{N}\, p_{M+N}\left(\beta,\hat{\boldsymbol{c}}^{(M,N)}\right) - 
\frac{M}{N}\, p_{M}\left(\beta,c'+\frac{cN}{M}\right)\, ,
\end{equation}
where 
$$
\hat{c}^{(M,N)}_{i,j}\, 
=\, \begin{cases}
c' (1+\frac{N}{M}) & \text{ if $i,j \leq M$,}\\
c (1+\frac{N}{M}) & \text { if $i \leq M$, $j>M$ or if $j\leq M$, $i>M$,}\\
0 & \text { if $i,j > M$.}
\end{cases}
$$
Choosing $c' = c / (1+\frac{N}{M})$, we see that
$$
\frac{M}{N}\, p_{M}\left(\beta,c'+\frac{cN}{M}\right) - 
\frac{M}{N}\, p_M(\beta,c)\,
=\, \frac{M}{N}\, \left(p_{M}\left(\beta,\frac{c}{1+\frac{N}{M}}+\frac{cN}{M}\right) - 
p_M(\beta,c)\right)\, .
$$
Using the bounds from Corollary \ref{cor:LipGeneralp}, this implies
\begin{equation}
\label{ineq:G2}
\left|\frac{M}{N}\, p_{M}\left(\beta,c'+\frac{cN}{M}\right) - 
\frac{M}{N}\, p_M(\beta,c)\right|\,
\leq\, \frac{\beta cN}{2(M+N)}\, .
\end{equation}
Similarly, using the fact that $p_{M+ N}(\beta,c) = p_{M+N}(\beta,\bc)$ for the matrix $\bc$
with $c_{ij}  =c$ for all $i,j$, we see that (choosing $c'$ as before)
$$
\left|\frac{M+N}{N}\, p_{M+N}\left(\beta,\hat{\boldsymbol{c}}^{(M,N)}\right) - 
\frac{M+N}{N}\, p_{M+N}(\beta,c)\right|\, 
\leq\, \frac{\beta c N}{M+N}\, ,
$$
using the matrix-version bound from Corollary \ref{cor:LipGeneralp}.
So, putting this together with (\ref{eq:pEstimate}) and (\ref{ineq:G2}), we have
$$
\left|
G_{N,c}^{(1)}(\beta,\mathcal{L}_{M,\beta,c'}) - G_N^{(2)}(\beta,c,\mathcal{L}_{M,\beta,c'})\,
- \left(\frac{M+N}{N}\, p_{M+N}(\beta,c) - 
\frac{M}{N}\, p_M(\beta,c)\right)\right|\, \leq\, \frac{3\beta c N}{2(M+N)}\, .
$$
Since this bound vanishes in the limit $M \to \infty$, before $N$ goes to $\infty$,
and since the
Boltzmann-Gibbs spin structure is just one particular choice of a random spin structure,
so that the true infimum is no greater than this,
we see that (\ref{eq:pFekete}) does imply (\ref{ineq:opp}), as desired.
\end{proofof}

\section{Derrida-Ruelle Construction}
\label{Derrida-Ruelle Random Probability Cascade}


Theorem \ref{thm:EVP} shows that the cavity functional
$$
G_N^{(1)}(\beta,c,\mathcal{L}) - G_N^{(2)}(\beta,c,\mathcal{L})
$$
needs to be minimized over discrete random spin structures $\mathcal{L} \in \mathscr{S}$. The optimal choice of the
measure has been conjectured to be described by a construction based on the Derrida-Ruelle random probability cascade
\cite{Derrida, Ruelle}.
The results we obtain in this section provide a rigorous proof to some physicist's results
obtained with heuristic methods in \cite{KZ, ZdeborovaKrzakala}.

%

\subsection{The Ultrametric space}

The Derrida-Ruelle probability cascade construction is based on a rooted tree with finitely many levels.
Let us define $\mathcal{T}_0 = \{\emptyset\}$ where $\emptyset$ will denote a single vertex at the root level.
For $\ell \in \N$, let $\mathcal{T}_\ell = \N^\ell$.
So a typical element of $\mathcal{T}_\ell$ is $\ba = (\alpha_1,\dots,\alpha_\ell)$ with $\alpha_1,\dots,\alpha_\ell \in \N$.
Let us denote this as $\ba^\ell = (\alpha_1,\dots,\alpha_{\ell})$ in order to explicitly denote the depth $\ell$.

Given $\ba^{\ell} = (\alpha_1,\dots,\alpha_{\ell})$ in $\mathcal{T}_{\ell}$, let us define $\ba^{\ell}_{\restriction k} = (\alpha_1,\dots,\alpha_k)$
in $\mathcal{T}_k$ for each $k=1,\dots,\ell$.
Then, given $L \in \N$, we define a tree of depth $L$ as 
$\mathscr{T}_L$ which has vertex set
$$
\mathscr{T}_L \, \stackrel{\text{def}}{:=}\, \mathcal{T}_0 \sqcup \mathcal{T}_1 \sqcup \cdots \sqcup \mathcal{T}_L\, ,
$$
and such that the mother of each $\ba^1 = (\alpha_1)$ in $\mathcal{T}_1$ is the root $\emptyset \in \mathcal{T}_0$
and the mother of each $\ba^{\ell} \in \mathcal{T}_{\ell}$ for $\ell=2,\dots,L$ is $\ba^{\ell}_{\restriction k}$.
As usual for trees, two vertices are connected if and only if one is the mother of the other one, called the daughter.

The leaf set of a tree is the set of all vertices which have no daughters.
So this is $\mathcal{T}_L$ for $\mathscr{T}_L$.
Next we define a family of random probability distributions
on the leaf set.

Let $V_L$ denote the set of all $L$-tuples
$\bm^{(L)} = (m_1,\dots,m_L)$ satisfying
$$
0\, <\, m_1\, <\, \dots\, <\, m_L\, <\, 1\, .
$$
For consistency, we define $V_0 = \{\emptyset\}$.
For each $L \in \N_0$
and each $\bm^{(L)} \in V_L$, we will define a probability distribution
giving rise to random variables $\widehat{\xi}_{\bm^{(L)}}(\ba^{(L)})$ 
for each $\ba^{(L)} \in \mathcal{T}_L$,
which are nonnegative and such that
$$
\sum_{\ba^{(L)} \in \mathcal{T}_L} \widehat{\xi}_{\bm^{(L)}}(\ba^{(L)})\, =\, 1\, ,
$$
almost surely, for each choice of $\bm^{(L)}$.
We use the hat to denote normalization, since we will construct
the probability measure $\widehat{\xi}_{\bm^{(L)}}(\ba^{(L)})$ by normalizing
an almost surely normalizable measure $\xi_{\bm^{(L)}}(\ba^{(L)})$.

We can define this inductively as follows.
We start by defining $\hat{\xi}_{\emptyset}$ to be the unique  (hence non-random) probability measure on $\mathcal{T}_0$:
$\widehat{\xi}_{\emptyset}(\emptyset)=1$.

\subsection{The Poisson-Dirichlet Derrida-Ruelle distributions}

To extend to the definition of $\xi_{\bm^{(L)}}$ to $L \in \N$
and $\bm^{(L)} \in V_L$,
we will first 
quickly review the definition of a general Poisson point process.
This is because our construction uses Poisson-Dirichlet distributions,
based on Poisson point processes.
But also, for certain proofs, the general definition of a Poisson point process will be useful.

Suppose that $\mathcal{X}$ is a locally compact metric space.
Suppose that $\Lambda$ is a locally finite Borel measure on $\mathcal{X}$, meaning
that for any compact set $K \subset \mathcal{X}$, we have $\Lambda(K)<\infty$.
Given this, one may define the Poisson process with intensity measure $\Lambda$
to be $\Xi$, a random point process, meaning that $\Xi$ is a 
random $\N_0$-valued measure.
Given $n \in \N$ and given disjoint compact sets $K_1,\dots,K_n \subseteq \mathcal{X}$,
we have the marginal distribution
$$
\P(\Xi(K_1)=k_1,\dots,\Xi(K_n)=k_n)\, =\, \prod_{i=1}^{n} \pi_{\Lambda(K_i)}(k_i)\, ,
$$
for each choice of $k_1,\dots,k_n \in \N_0$.
We remind the reader that the Poisson distribution was defined in (\ref{eq:Poisson}).

Due to infinite divisibility this is a consistent definition in the sense of the Kolmogorov consistency
conditions. It also leads to the alternative description in terms of the moment generating functional.
Suppose that $f : \mathcal{X} \to [0,\infty)$ is any Borel measurable function.
Then
\begin{equation}
\label{eq:MGFal}
\E\left[\exp\left(-\int_{\mathcal{X}} f(x) d\Xi(x)\right)\right]\,
=\, \exp\left(-\int_{\mathcal{X}} (1-e^{-f(x)})\, d\Lambda(x)\right)\, .
\end{equation}
This identity being true for all nonnegative, Borel measurable functions is equivalent
to the consistent family of marginal distributions described above.
This general framework will be useful shortly.
Among many good reviews of Poisson processes, Ruelle's paper on Derrida's REM and GREM
is an exemplary reference \cite{Ruelle}.

Now we define the random measure $\widehat{\xi}_{\bm^{(1)}}$ on $\mathcal{T}_1 = \N^1$ for each
choice of $\bm^{(1)} = (m_1)$ with $m_1 \in (0,1)$.
Let us denote $m_1$ as just $m$ for this case, $L=1$.
Let $\Lambda_m$ be the following locally finite measure on $\mathcal{X} = (0,\infty)$,
$$
d\Lambda_m(x)\, =\, mx^{-m-1}\, dx\, .
$$
Let $\Xi$ be the 
An example of an easy calculation with (\ref{eq:MGFal}) is the following:
\begin{lemma}
\label{lem:Laplace}
For any $p>m$ and any $\lambda>0$,
$$
\E\left[\exp\left(-\lambda \int_0^{\infty} x^p\, d\Xi(x)\right)\right]\, =\, \exp\left(-\lambda^{m/p} \int_0^{\infty} x^{-m/p} e^{-x}\, dx\right)\, .
$$
\end{lemma}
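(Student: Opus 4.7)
The plan is to compute the Laplace functional of the Poisson point process $\Xi$ using the master identity \eqref{eq:MGFal} and then reduce the resulting integral to a Gamma-type integral by a scaling change of variables.

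First I would take $f(x) = \lambda x^p$ in \eqref{eq:MGFal} and apply it to the intensity $d\Lambda_m(x) = mx^{-m-1}\,dx$ on $(0,\infty)$. This yields
\begin{equation*}
\E\left[\exp\left(-\lambda \int_0^{\infty} x^p\, d\Xi(x)\right)\right]
\, =\, \exp\left(-\int_0^\infty \left(1-e^{-\lambda x^p}\right) m x^{-m-1}\, dx\right).
\end{equation*}
Before going further I would verify that this integral converges: near $x=0$, the factor $1-e^{-\lambda x^p}$ behaves like $\lambda x^p$, so the integrand is $O(x^{p-m-1})$, which is integrable precisely because of the hypothesis $p>m$; near $x=\infty$, the factor is bounded and $x^{-m-1}$ decays integrably since $m>0$ is implicit.

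Next I would perform the substitution $y=\lambda x^p$, i.e.\ $x=(y/\lambda)^{1/p}$ and $dx = \tfrac{1}{p}\lambda^{-1/p}y^{1/p-1}\,dy$, which after routine algebra turns the exponent into
\begin{equation*}
\frac{m\lambda^{m/p}}{p}\int_0^\infty (1-e^{-y})\, y^{-m/p-1}\, dy.
\end{equation*}
The factor $\lambda^{m/p}$ drops out exactly because the intensity measure is homogeneous of degree $-m$ under scaling. At this point I would integrate by parts, taking $u=1-e^{-y}$ and $dv = y^{-m/p-1}\,dy$ (so $v = -(p/m)y^{-m/p}$); the boundary terms vanish since $1-e^{-y}\sim y$ at the origin against $y^{-m/p}$ with $m/p\in(0,1)$, and since $y^{-m/p}(1-e^{-y})\to 0$ at infinity. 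This converts the integral into $(p/m)\int_0^\infty e^{-y} y^{-m/p}\,dy$, and the $m/p$ prefactor cancels cleanly, leaving exactly $\lambda^{m/p}\int_0^\infty x^{-m/p}e^{-x}\,dx$ in the exponent, as required.

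There is no single hard step here; the only subtle point is the convergence/boundary bookkeeping in the integration by parts, which relies in two places on the same hypothesis $p>m$ (equivalently $m/p<1$): once to control $y^{-m/p-1}$ at the origin against the linear vanishing of $1-e^{-y}$, and once to get the surface term to vanish. Without this assumption, the Laplace functional would not even be finite, so the hypothesis is sharp.
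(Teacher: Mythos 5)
Your proof is correct and follows essentially the same route as the paper: apply the moment generating functional identity \eqref{eq:MGFal}, then combine an integration by parts with the substitution $y=\lambda x^p$ (you merely perform these two steps in the opposite order, substituting first and integrating by parts in $y$, whereas the paper integrates by parts in $x$ first). Your convergence and boundary-term bookkeeping, both hinging on $p>m$, matches the paper's justification.
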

This will be proved in Section \ref{sec:PD}. This implies that $\int_0^\infty x\, d\Xi(x)$ is in $(0,\infty)$, almost surely.
(Taking $\lambda$ to $0$ we see that the probability to be $\infty$ is zero,
and taking $\lambda \to \infty$, we see that the probability to be $0$ is zero.)
In turn this implies that almost surely we can identify points 
$$
\xi_1\, \geq\, \xi_2\, \geq\, \dots\, >\, 0\, ,
$$ 
such that $\Xi(A) = \sum_{n=1}^{\infty} \boldsymbol{1}_A(\xi_n)$ for every Borel set $A \subseteq (0,\infty)$, and $\sum_{n=1}^{\infty} \xi_n$ is in $(0,\infty)$, almost surely.
A key property is the following stability property, whose proof may be found in the paper 
\cite{AizenmanRuzmaikina}:
\begin{theorem}
\label{thm:stability}
Suppose that $X_1,X_2,\dots$ are i.i.d., positive random multipliers, a.s., independent of $\xi_1,\xi_2,\dots$,
and such that $\E[X_i^m] < \infty$.
Then the random point process $A \mapsto \sum_{n=1}^{\infty} \boldsymbol{1}_A(X_n \xi_n)$
is equal in distribution to the random point process $A \mapsto \sum_{n=1}^{\infty} \boldsymbol{1}_A(c \xi_n)$
for $c = (\E[X_i^m])^{1/m}$.
\end{theorem}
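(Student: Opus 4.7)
The plan is to characterize both point processes via their Laplace functionals, using equation~(\ref{eq:MGFal}), and to show the functionals coincide. The key ingredient is the scaling identity for the intensity measure $\Lambda_m$: the homogeneity $\Lambda_m(dx) = m x^{-m-1}\,dx$ gives, for any $\lambda>0$ and any nonnegative Borel $g$,
$$
\int g(\lambda y)\, \Lambda_m(dy)\;=\;\lambda^m \int g(z)\, \Lambda_m(dz),
$$
by the change of variable $z = \lambda y$. This scale-invariance (up to a prefactor $\lambda^m$) is what ties the random multipliers to a single effective constant.

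First I would compute the Laplace functional of $\widetilde{\Xi} := \sum_n \delta_{X_n\xi_n}$. Fix a nonnegative Borel $f:(0,\infty)\to[0,\infty)$. Conditioning on the process $\Xi = \sum_n \delta_{\xi_n}$ and using that the $X_n$ are i.i.d.\ and independent of $\Xi$,
$$
\E\!\left[e^{-\int f\, d\widetilde{\Xi}}\right]
= \E_{\Xi}\!\left[\prod_{n}\E_{X}\!\left[e^{-f(X\xi_n)}\right]\right]
= \E_{\Xi}\!\left[e^{-\int h\, d\Xi}\right],
$$
where $h(y) := -\ln \E_X[e^{-f(Xy)}]$ is nonnegative and Borel. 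Applying (\ref{eq:MGFal}) to $\Xi$ with the function $h$, then Fubini on the nonnegative integrand $1-e^{-f(Xy)}$, and finally the scaling identity above,
$$
\E\!\left[e^{-\int f\, d\widetilde{\Xi}}\right]
= \exp\!\left(-\E_X\!\left[\int(1-e^{-f(Xy)})\, \Lambda_m(dy)\right]\right)
= \exp\!\left(-\E[X^m]\int (1-e^{-f(z)})\, \Lambda_m(dz)\right).
$$

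Running the identical calculation with the random $X_n$ replaced by a constant $c>0$ gives Laplace functional $\exp\!\left(-c^m \int (1-e^{-f(z)})\, \Lambda_m(dz)\right)$ for $\sum_n \delta_{c\xi_n}$. The choice $c = \E[X^m]^{1/m}$ matches the two exponents for every admissible $f$. Since the Laplace functional (equivalently, the collection of marginals appearing in (\ref{eq:MGFal})) uniquely determines the law of a point process on a locally compact metric space, the two point processes agree in distribution, which is the claim.

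The only genuine technicality is ensuring every expression above is well-defined and that the Fubini/conditioning manipulations are justified. Local finiteness of $\widetilde{\Xi}$ away from $0$ amounts to $\E[X^m]\Lambda_m([a,b])<\infty$ for every $0<a<b$, which is exactly what the hypothesis $\E[X^m]<\infty$ supplies; positivity of $1-e^{-f(Xy)}$ removes any integrability concern in Fubini; and the conditioning step uses only that $\sigma(\{X_n\})$ is independent of $\sigma(\Xi)$. Thus the entire argument reduces to the scaling property of $\Lambda_m$ combined with the closed-form Laplace functional (\ref{eq:MGFal}), and I do not anticipate any deeper obstacle.
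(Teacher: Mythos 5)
Your proof is correct and follows essentially the same route as the paper's: both arguments rest on the Laplace-functional characterization (\ref{eq:MGFal}) together with the homogeneity $\int g(\lambda y)\,\Lambda_m(dy)=\lambda^m\int g(z)\,\Lambda_m(dz)$, which is what produces the factor $\E[X^m]$. The only difference is organizational --- the paper packages the conditioning step as the marking theorem and the map $(\xi,\lambda)\mapsto\lambda\xi$ as the mapping theorem for Poisson processes and then takes a marginal of the resulting intensity, whereas you inline both steps into a single direct computation of the Laplace functional.
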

We will give a few hints of the proof in the Appendix.

Then we define $\widehat{\xi}_{(m)}((\alpha))$ for all $\alpha \in \N$ as follows:
$$
\widehat{\xi}_{(m)}((\alpha))\, =\, \frac{\xi_{\alpha}}{\xi_1+\xi_2+\dots}\quad \text{ for $\alpha\in \N$.}
$$
The distribution of this random discrete probability measure is called the Poisson-Dirichlet distribution $\operatorname{PD}(m,0)$.
It is one branch of the two-parameter Poisson-Dirichlet distributions 
(see \cite{Pitman}).

Note that we have now defined $\widehat{\xi}_{\bm^{(1)}}(\ba^{(1)})$ for all $\bm^{(1)} \in V_1$ and $\ba^{(1)} \in \mathcal{T}_1$,
satisfying the desired conditions, almost surely.
Now we define $\widehat{\xi}_{\bm^{(L)}}$ for all $L\geq 1$ and $\bm^{(L)} \in V_L$, inductively.
We have defined it above for $L=1$ and $\bm^{(1)} = (m) \in V_1$.
Assuming $L$ is in $\{2,3,\dots\}$ and that we have defined the measure for all depths less than $L$,
we treat the case of depth $L$ as follows.

First, using the induction hypothesis we may assume the existence of random variables 
$$
\widehat{\xi}_{\bm^{(L)}_{\restriction L-1}}(\ba^{(L-1)}) \quad \text{ for all $\ba^{(L-1)} \in \mathcal{T}_{L-1}$,}
$$
where $\bm^{(L)}_{\restriction L-1}$ is defined as the restriction to the first $L-1$ coordinates of $\bm^{(L)} = (m_1,\dots,m_L)$.
Then, independently of that, for all $\ba^{(L-1)} \in \mathcal{T}_{L-1}$, let us take $\Xi^{(\ba^{(L-1)})}$ to be a Poisson
point process with intensity $\Lambda_{m_L}$, such that all the Poisson point processes are independent
for different choices of $\ba^{(L-1)} \in \mathcal{T}_{L-1}$.
Each one may be written as 
$$
\Xi^{(\ba^{(L-1)})}(A)\, =\, \sum_{n=1}^{\infty} \boldsymbol{1}_A(\xi^{(\ba^{(L-1)})}_n)
\quad \text { for all measurable $A \subseteq (0,\infty)$,}
$$
for some random numbers $\xi^{(\ba^{(L-1)})}_1 \geq \xi^{(\ba^{(L-1)})}_2 \geq \cdots > 0$.
The following is a corollary of Lemma \ref{lem:Laplace} and Theorem \ref{thm:stability},
which we will prove this in the Apeendix. 
\begin{corollary}
\label{cor:Z}
Assuming $0<m_1<\dots<m_L$ then the nonnegative random variable 
$$
Z(\bm^{(L)})\, \stackrel{\text{def}}{:=}\, \sum_{\ba^{(L)} \in \mathcal{T}_L} \widehat{\xi}_{\bm^{(L)}_{\restriction L-1}}(\ba^{(L)}_{\restriction L-1})
\cdot \xi^{(\ba^{(L)}_{\restriction L-1})}_{\alpha_L}
$$
satisfies $0<Z(\bm^{(L)})<\infty$, almost surely.
\end{corollary}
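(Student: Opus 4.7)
The plan is to prove the corollary by induction on $L$, strengthening the statement at each level to also include the tail-moment property that for every $s > m_L$,
$$
\sum_{\ba^{(L)} \in \mathcal{T}_L} \widehat{\xi}_{\bm^{(L)}}(\ba^{(L)})^s\, <\, \infty\quad \text{a.s.}
$$
This auxiliary property is not part of the corollary itself, but is indispensable for the next inductive step. The base case $L=1$ is a direct application of Lemma~\ref{lem:Laplace} with $p=1 > m_1$: the Laplace transform $\E[\exp(-\lambda \sum_n \xi_n)] = \exp(-\lambda^{m_1}\Gamma(1-m_1))$ lies in $(0,1)$ for every $\lambda > 0$, which forces $\sum_n \xi_n \in (0,\infty)$ a.s.; the tail condition follows from the standard asymptotic $\xi_n \sim n^{-1/m_1}$ for the ordered points of a Poisson process with intensity $\Lambda_{m_1}$.

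For the inductive step at level $L$, condition on $\widehat{\xi}_{\bm^{(L)}_{\restriction L-1}}$ and abbreviate $q_\ba := \widehat{\xi}_{\bm^{(L)}_{\restriction L-1}}(\ba)$. The scaling property of the measure $\Lambda_{m_L}$ (under $x \mapsto q_\ba x$ its density is multiplied by $q_\ba^{m_L}$), together with the independence of the $\Xi^{(\ba)}$ and the superposition theorem for Poisson processes, identifies the point set $\{q_\ba \xi^{(\ba)}_{\alpha_L}\}_{\ba,\alpha_L}$ as the points of a single Poisson process with intensity $C\,\Lambda_{m_L}$, where $C := \sum_{\ba \in \mathcal{T}_{L-1}} q_\ba^{m_L}$. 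This is essentially Theorem~\ref{thm:stability} after a reindexing. The induction hypothesis applied at level $L-1$ with $s=m_L > m_{L-1}$ gives $C < \infty$ a.s., and $C > 0$ trivially, so the superposed process is a.s.\ well-defined. Then $Z(\bm^{(L)}) = \int_0^\infty x\,d\Xi(x)$ for this superposition, and Lemma~\ref{lem:Laplace} with $p=1 > m_L$ (noting that a multiplicative constant in the intensity multiplies the Laplace exponent by the same constant) yields
$$
\E\!\left[\exp\!\left(-\lambda Z(\bm^{(L)})\right)\,\Big|\,\widehat{\xi}_{\bm^{(L)}_{\restriction L-1}}\right]\, =\, \exp\!\left(-C\,\Gamma(1-m_L)\,\lambda^{m_L}\right)\, .
$$
Sending $\lambda \downarrow 0$ forces $Z(\bm^{(L)}) < \infty$ a.s., and sending $\lambda \uparrow \infty$ forces $Z(\bm^{(L)}) > 0$ a.s., proving the corollary.

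The main obstacle is propagating the tail-moment condition to level $L$ so that the induction continues. For this one reuses the identification above of $\{q_\ba \xi^{(\ba)}_{\alpha_L}\}$ as a Poisson process with intensity $C\Lambda_{m_L}$: its ordered points obey $\xi_{(n)} \sim C^{1/m_L}\, n^{-1/m_L}$ a.s., so after normalizing by $Z(\bm^{(L)})$ — which is finite and positive a.s.\ by the corollary just established — the ordered values of $\widehat{\xi}_{\bm^{(L)}}$ still decay like $n^{-1/m_L}$ up to a random positive factor, giving summability of the $s$-th power precisely when $s > m_L$. Verifying this tail asymptotic rigorously, either via a Poisson process law of large numbers or by direct inversion of the explicit Laplace transform above, is the one step requiring some care, but is standard for Poisson--Dirichlet cascades.
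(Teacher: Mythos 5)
Your proof is correct and follows essentially the same route as the paper's (much terser) argument: induction on $L$, conditioning on the level-$(L-1)$ weights, passing to the un-normalized Poisson process via scaling/superposition (the content of Theorem~\ref{thm:stability}), and invoking Lemma~\ref{lem:Laplace}. Your strengthened induction hypothesis $\sum_{\ba}\widehat{\xi}(\ba)^s<\infty$ for $s>m_{L-1}$ is exactly the ``finite fractional moments'' point the paper notes as the reason the ordering $m_1<\dots<m_L$ is essential, so your write-up simply makes the paper's sketch precise.
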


Then we complete the induction step by defining
$$
\widehat{\xi}_{\bm^{(L)}}(\ba^{(L)})\, \stackrel{\text{def}}{:=}\, 
\frac{1}{Z(\bm^{(L)})}\, \widehat{\xi}_{\bm^{(L)}_{\restriction L-1}}(\ba^{(L)}_{\restriction L-1})
\cdot \xi^{(\ba^{(L)}_{\restriction L-1})}_{\alpha_L}\quad 
\text{ for all $\ba \in \mathcal{T}_L$,}
$$
which is well-defined and normalized, almost surely.

Next one constructs a probability measure on spins, indexed by leaves of the tree. 

\subsection{The 
measures on measures construction
}

\label{subsec:FL}

Let $\mathcal{M}_1$ denote the set of all probability measures on $[q]$.
This is a finite-dimensional simplex. 
Using the topology of weak-convergence on probability measures
this simplex has its usual topology.
In particular it is compact and metrizable.

Let $\mathcal{M}_2$ denote the set of all Borel probability measures on $\mathcal{M}_1$.
Then, with the topology of weak convergence, this is also compact and metrizable.
Indeed, the set of Borel measures on a compact, metrizable set is always itself
compact and metrizable when equipped with the topology of weak-convergence.

Therefore, inductively, for all $\ell \in \N$, we let $\mathcal{M}_{\ell+1}$ denote the
set of all Borel probability measures on $\mathcal{M}_\ell$, equipped with the topology
of weak-convergence.
We denote a measure in $\mathcal{M}_{\ell+1}$ as $\mu^{(\ell+1)}$.
But we note that the standard notation for its differential is somewhat
cumbersome $d\mu^{(\ell+1)}(\mu^{(\ell)})$.

Now let $\mu^{(L)}$ denote any measure in $\mathcal{M}_L$.
This is our input.
In order to initialize the induction step, we change notation slightly,
$$
\mu^{(L)}_{\emptyset}\, \stackrel{\text{def}}{:=}\, \mu^{(L)}\, .
$$
For each $\ba^{(1)} = (\alpha_1) \in \mathcal{T}_1$,
let $\mu^{(L-1)}_{\ba^{(1)}}$ be a random element of $\mathcal{M}_{L-1}$,
distributed according to $\mu^{(L)}_{\emptyset}$, and such that they are all independent
for different choices of $\ba^{(1)} \in \mathcal{T}_1$.

Continue inductively.
For $\ell=2,\dots,L-1$ let $\mathcal{F}_{\ell-1}$ denote the $\sigma$-algebra generated by
all the random variable that were constructed at the previous level,
$$
\mu^{(L-k+1)}_{\ba^{(k-1)}}\quad \text{ for all $k\leq \ell$ and $\ba^{(k-1)} \in \mathcal{T}_{k-1}$.}
$$
We construct $\mu^{(L-\ell)}_{\ba^{(\ell)}}$ for all $\ba^{(\ell)} \in \mathcal{T}_{\ell}$
as follows.

Conditionally, given $\mathcal{F}_{\ell-1}$,
let $\mu^{(L-\ell)}_{\ba^{(\ell)}}$ be a random element of $\mathcal{M}_{L-\ell}$,
distributed according to $\mu^{(L-\ell+1)}_{\ba^{(\ell)}_{\restriction \ell-1}}$.
More precisely choose these random variables $\mu^{(L-\ell)}_{\ba^{(\ell)}}$,
for each $\ba^{(\ell)} \in \mathcal{T}_{\ell}$, such that they are all conditionally independent,
conditional on $\mathcal{F}_{\ell}$.

Finally, given all this, for each $\ba^{(L)} \in \mathcal{T}_{L}$ and each $i \in \N$, let 
$\tau_i(\ba^{(L)})$ be distributed according to $\mu^{(1)}_{\ba^{(L)}_{\restriction L-1}}$,
such that they are all conditionally independent, conditional on $\mathcal{F}_1$.
Let us define
$$
\tau(\ba^{(L)}) = (\tau_1(\ba^{(L)}),\tau_2(\ba^{(L)}),\dots) \in [q]^{\N}\, ,
$$
for each $\ba^{(L)} \in \mathcal{T}_L$.
Then we may consider the pairs consisting  of $(\widehat{\xi}_{\bm^{(L)}}(\ba^{(L)}))_{\ba^{(L)} \in \mathcal{T}_L}$
and $(\tau(\ba^{(\ell)}))_{\ba^{(\ell)} \in \mathcal{T}_L}$.
Note that $\mathcal{T}_L$ is countable.
We denote the distribution of such pairs as $\mathcal{L}_{\bm^{(L)},\mu^{(L)}}$.
Then since the set of possible $\alpha$, here replaced by $\ba^{(L)} \in \mathcal{T}_L$,
is countable, this is an example of a discrete random spin structure in 
$\mathscr{S}$ as in Definition \ref{def:Kingman}.

\begin{remark}
The necessity to introduce the measure on measure structure comes from the fact that,
unlike in gaussian spin glass where the infinitely divisible distribution allows a continuous
parametrization of ansatz, here the lack of the property of infinite divisibility forces the 
introduction of discrete iteration ansatz in the optimization procedure.
\end{remark}

\section{
``Replica Symmetry Breaking'' bounds}

We obtain here rigorous bounds as a consequence of Theorem \ref{thm:RSBbd}.

\subsection{One level trees and the annealed bounds}

The simplest case to consider is $L=1$.
Then $\bm^{(1)} = (m_1)$ for some $m_1 \in (0,1)$.
For this case, we choose to rewrite $m_1$ as just $m \in (0,1)$,
so that $\bm^{(1)} = (m)$.
In this case we have a Poisson-Dirichlet distribution which according to our previous notation is
$$
\widehat{\xi}_{(m)}((1))\, ,\ \widehat{\xi}_{(m)}((2))\, ,\ \dots\, .
$$
We prefer to work directly with the Poisson point process $\xi_1 \geq \xi_2 \geq \dots > 0$,
with intensity measure $\Lambda_m$, defining
$$
Z\, =\, \sum_{n=1}^\infty \xi_n\, ,
$$
which is almost surely in $(0,\infty)$. Then $\widehat{\xi}_{(m)}((\alpha))$ is equal to
$\xi_{\alpha}/Z$.
It will turn out that the effect of the normalization $Z$ will cancel in the formula for
$$
G_N^{(1)}(\beta,c,\mathcal{L}_{\bm^{(1)},\mu^{(1)}}) - G_N^{(2)}(\beta,c,\mathcal{L}_{\bm^{(1)},\mu^{(1)}})\, .
$$
But by using the Poisson point process directly, instead of the normalized Poisson-Dirichlet process, we may
may appeal to Theorem \ref{thm:stability} to help in the calculations of 
(\ref{eq:GN1}) and (\ref{eq:GN2}).

Let us also refer to $\tau^{(\alpha)} = (\tau^{(\alpha)}_1,\tau^{(\alpha)}_2,\dots)$,
which are i.i.d., distributed according to $\mu^{(1)}_{\emptyset}=\mu^{(1)}$
for some non-random measure $\mu^{(1)} \in \mathcal{M}_1$. 
For each $\alpha \in \N$, and $\bI \in \mathcal{I}_N$, let us define
$$
X_{\alpha}(\bI)\, =\, \sum_{\sigma \in [q]^N}\exp\left(-\beta \widetilde{H}_N(\bI,\tau^{(\alpha)},\sigma)\right)\, .
$$
Then, conditioning on $\bI$, these are i.i.d., random variables in $\alpha$.
In other words, the random variables $\tau^{(\alpha)}_{1},\tau^{(\alpha)}_2,\dots$,
are all i.i.d., for different $\alpha$'s.
Therefore,
the resulting marginal distribution of the $X_{\alpha}(\bI)$'s are i.i.d (for each fixed $\bI \in \mathcal{I}_N$).
Then Theorem \ref{thm:stability} implies that
$$
\sum_{\alpha=1}^{\infty} \xi_{\alpha} \sum_{\sigma \in [q]^N}\exp\left(-\beta \widetilde{H}_N(\bI,\tau^{(\alpha)},\sigma)\right)\,
=\, \sum_{\alpha=1}^{\infty} \xi_{\alpha} X_{\alpha}(\bI)\, 
\stackrel{\mathcal{D}}{=}\, \E[X_{\alpha}(\bI)^m\, |\, \bI]^{1/m} \sum_{\alpha=1}^{\infty} \xi_{\alpha}\, ,
$$
where we indicate equality in distribution by $\mathcal{D}$.

Note that the sum of the $\xi_{\alpha}$'s is $Z$ which is the normalization.
So, since we are taking the logarithm, 
$$
G_N^{(1)}(\beta,c,\mathcal{L}_{\bm^{(1)},\mu^{(1)}})\,
=\, \frac{1}{mN}\, \widetilde{\E}_{N,c}\left[ \ln \E[X_{\alpha}(\bI)^m\, |\, \bI] \right]\, ,
$$
where the inner conditional expectation is over the $\mu^{(0)}_{\alpha}$'s and the $\tau^{(\alpha)}$'s,
but not $\bI$.
Similarly, we obtain
$$
G_N^{(2)}(\beta,c,\mathcal{L}_{\bm^{(1)},\mu^{(1)}})\,
=\, \frac{1}{mN}\, \sum_{K=0}^{\infty} \pi_{cN/2}(K) \ln \E[Y_{\alpha}(K)^m]\, ,
$$
where we define
$$
Y_{\alpha}(K)\, =\, \exp\left(-\beta \sum_{k=1}^{K} \delta(\tau_{2k-1}^{(\alpha)},\tau_{2k}^{(\alpha)})\right)\, .
$$
A very easy warm-up is the limiting case $m \uparrow 1$.
Note that this limit is not a discrete spin structure in $\mathscr{S}$.
In the Appendix
we will mention a compactification.
But this is not necessary, here. 
For each $m$ we have the upper bound
$$
p(\beta,c)\, 
\leq\, \frac{1}{mN}\, \widetilde{\E}_{N,c}\left[ \ln \E[X_{\alpha}(\bI)^m\, |\, \bI] \right]
- \frac{1}{mN}\, \sum_{K=0}^{\infty} \pi_{cN/2}(K) \ln \E[Y_{\alpha}(K)^m]\, .
$$
The right hand side is continuous in $m$.
Therefore, taking the limit as $m \to 1$, we still have the upper bound
\begin{equation}
\label{eq:pAnn2}
p_N(\beta,c)\,
\leq\, \frac{1}{N}\, \widetilde{\E}_{N,c}\left[ \ln \E[X_{\alpha}(\bI)\, |\, \bI] \right]
- \frac{1}{N}\, \sum_{K=0}^{\infty} \pi_{cN/2}(K) \ln \E[Y_{\alpha}(K)]\, .
\end{equation}
Finally, to make the bound even easier we may take $\mu^{(1)} \in \mathcal{M}_1$
to be the uniform measure on $[q]$.
In other words,  the $\tau^{(\alpha)}_i$'s are i.i.d., random, uniformly
distributed on $[q]$.
For this simplified case,
\begin{multline*}
\E[Y_{\alpha}(K)]\,
=\, \E\left[\exp\left(-\beta \sum_{k=1}^{K} \delta(\tau_{2k-1}^{(\alpha)},\tau_{2k}^{(\alpha)})\right)\right]\,
=\, \left[1 - \frac{1-e^{-\beta}}{q}\right]^K\, ,\\
\Rightarrow\quad
\frac{1}{N}\, \sum_{K=0}^{\infty} \pi_{cN/2}(K) \ln \E[Y_{\alpha}(K)]\,
=\, \frac{c}{2}\, \ln\left(1-\frac{1-e^{-\beta}}{q}\right)\, .
\end{multline*}
Similarly,
\begin{multline*}
\E[X_{\alpha}(\bI)\, |\, \bI]\,
=\, 
\sum_{\sigma \in [q]^N}
\E\left[\exp\left(-\beta \sum_{i=1}^{|\bI|} \delta(\tau^{(\alpha)}_i,\sigma_{I_i})\right)\, \bigg|\, \bI\right]\, 
=\, q^N \left[1 - \frac{1-e^{-\beta}}{q}\right]^{|\bI|}\, ,\\
\Rightarrow\quad
\frac{1}{N}\, \widetilde{\E}_{N,c}\left[ \ln \E[X_{\alpha}(\bI)\, |\, \bI] \right]\,
=\, \ln(q) +
c \ln\left(1-\frac{1-e^{-\beta}}{q}\right)\, .
\end{multline*}
Therefore, combining this with (\ref{eq:pAnn2}),
we obtain the bound
$$
p_N(\beta,c)\, \leq\, \ln(q) + \frac{c}{2}\, \ln\left(1-\frac{1-e^{-\beta}}{q}\right)\, ,
$$
which re-derives the annealed upper bound (\ref{eq:SumRule}) in Theorem \ref{thm:Annealed},
without the sum-rule correction.
In fact, one can include the correction term also in the analogue of Guerra's upper bound
in Theorem \ref{thm:RSBbd}.
But we did not do this here, because we do not have any method to control the error term.

The ansatz we have taken here is not  the  general case of the so-called ``replica symmetric'' ansatz.
We will explain that in the next section: the difference is that there should be two steps, $m_1<m_2$
and then one takes the limit $m_1\downarrow 0$, $m_2\uparrow 1$.
Instead we just have 1 level, with $m\uparrow 1$. 
So this is a specialized ansatz, which one could call the ``trivial replica symmetric ansatz.''
Moreover, we chose the most basic choice for $\mu^{(1)} \in \mathcal{M}_1$.
So we could call this the ``trivial, symmetric replica symmetric ansatz.''
Next we will consider a more refined upper bound (see 
\cite{KZ,ZdeborovaKrzakala}).
There existes a local instability point within the replica symmetric ansatz, where another replica symmetric 
trial state gives a lower bound than the trivial, symmetric replica symmetric ansatz.

\label{subsec:OneLevel}

\subsection{Two level trees and the replica symmetric ansatz}

Recall that $\mathcal{F}_L \subseteq \mathcal{F}_{L-1} \subseteq \dots \subseteq \mathcal{F}_1$, defined in Section \ref{subsec:FL}
is a reversed filtration.
\begin{lemma} For $L\geq 0$ and $\bm^{(L)} \in V_L$ and $\mu^{(L)} \in \mathcal{M}_L$,
define two sequences of random variables:
$X_{\alpha}^{(L)}(\bI) = X_{\alpha}(\bI)$ and $Y_{\alpha}^{(L)}(K) = Y_{\alpha}(K)$
as defined in Section \ref{subsec:OneLevel}, and for $\ell=1,\dots,L-1$,
\begin{gather*}
X_{\alpha}^{(\ell)}(\bI)\, =\, \E\left[X_{\alpha}^{(\ell+1)}(\bI)^{m_{\ell+1}}\, |\, \mathcal{F}_{L-\ell}\vee\sigma(\bI)\right]^{1/m_{\ell+1}}\quad 
\text{ and}\\
Y_{\alpha}^{(\ell)}(K)\, =\, \E\left[Y_{\alpha}^{(\ell+1)}(K)^{m_{\ell+1}}\, |\, \mathcal{F}_{L-\ell}\right]^{1/m_{\ell+1}}\, .
\end{gather*}
Then the cavity field functionals are calculated at the final step of the backward iteration
\begin{gather*}
G_N^{(1)}(\beta,c,\mathcal{L}_{\bm^{(L)},\mu^{(L)}})\,
=\, \frac{1}{m_1 N}\, \widetilde{\E}_{N,c}\left[\ln  \E\left[X_{\alpha}^{(1)}(\bI)^{m_1}\, |\, \bI\right]\right]\quad 
\text{ and}\\
G_N^{(2)}(\beta,c,\mathcal{L}_{\bm^{(L)},\mu^{(L)}})\,
=\, \frac{1}{m_1 N}\,  \sum_{K=0}^{\infty} \pi_{cN/2}(K) \ln \E[Y_{\alpha}^{(1)}(K)^{m_1}]\, .
\end{gather*}
\label{lem:tree}
\end{lemma}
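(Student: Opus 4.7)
The plan is to iterate Theorem~\ref{thm:stability} at each level of the Derrida-Ruelle cascade, working outward from the innermost level $\ell=L$, thereby reducing the $L$-level problem to the one-level computation already carried out in Subsection~\ref{subsec:OneLevel}. The main tool extracted from that computation is the following consequence of stability: if $\Xi$ is a $\Lambda_m$-PPP with atoms $\{\xi_n\}_{n\ge 1}$ and $X_1,X_2,\dots$ are, conditionally on a $\sigma$-algebra $\mathcal{G}$, i.i.d.\ positive multipliers independent of $\Xi$ with $\E[X_1^m\mid\mathcal{G}]<\infty$, then
\begin{equation*}
\E\!\left[\ln\sum_{n\ge 1}\xi_n X_n\,\Big|\,\mathcal{G}\right]\;=\;\frac{1}{m}\,\ln\E[X_1^m\mid\mathcal{G}]+\E[\ln Z],
\end{equation*}
where $Z=\sum_n\xi_n$ and $\E[\ln Z]$ is a universal constant depending only on $m$. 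This identity is precisely what makes $Z$ drop out when one passes to the normalized Poisson-Dirichlet weights $\hat\xi_n=\xi_n/Z$.

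I would first rewrite the log of the normalized weighted sum inside $G_N^{(1)}$ as $\ln(\text{unnormalized numerator})-\ln Z(\bm^{(L)})$, using the explicit form $\hat\xi_{\bm^{(L)}}(\ba^{(L)})=Z(\bm^{(L)})^{-1}\prod_{\ell=1}^{L}\xi^{(\ba^{(\ell-1)})}_{\alpha_\ell}$. The numerator and the normalization share the same nested PPP structure, the only difference being that the numerator carries the extra multiplier $X^{(L)}_{\ba^{(L)}}(\bI)$ at each leaf. I would then apply the key identity recursively from the inside out, conditioning at each step on the appropriate element of the reversed filtration from Subsection~\ref{subsec:FL}, which by construction renders sibling branches conditionally i.i.d.\ and independent of the PPP at that level. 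Each inner sum at level $\ell+1$ is thereby collapsed in expectation to the effective multiplier $X^{(\ell)}_\alpha(\bI)$ of the lemma, together with universal additive constants of the form $\E[\ln Z_\ell]$ and, at intermediate levels, $m_{\ell-1}^{-1}\ln\E[Z_\ell^{m_{\ell-1}}]$ arising from the independence of the PPP normalizations and the measure-level randomness. After $L$ such iterations the expected log of the numerator (given $\bI$) equals $m_1^{-1}\ln\E[X^{(1)}_\alpha(\bI)^{m_1}\mid\bI]+C$, where $C$ collects all these universal constants. Performing the identical recursion on $\ln Z(\bm^{(L)})$ corresponds to the case $X^{(L)}\equiv 1$, which forces $X^{(\ell)}\equiv 1$ at every level, and therefore produces precisely the same $C$ and no multiplier-dependent piece. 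Subtracting yields
\begin{equation*}
\E\!\left[\ln\sum_{\ba^{(L)}}\hat\xi_{\bm^{(L)}}(\ba^{(L)})X^{(L)}_{\ba^{(L)}}(\bI)\,\Big|\,\bI\right]\;=\;\frac{1}{m_1}\,\ln\E\!\left[X^{(1)}_\alpha(\bI)^{m_1}\,\big|\,\bI\right],
\end{equation*}
and averaging with $\widetilde{\E}_{N,c}$ and dividing by $N$ produces the claimed formula for $G_N^{(1)}$. The computation for $G_N^{(2)}$ is entirely parallel, with $(X_\alpha(\bI),\widetilde{\E}_{N,c})$ replaced by $(Y_\alpha(K),\sum_K\pi_{cN/2}(K))$; the absence of $\bI$-dependence at the leaves only simplifies the bookkeeping.

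The principal technical obstacle is verifying the clean cancellation of the spurious constants $\E[\ln Z_\ell]$ and $\ln\E[Z_\ell^{m_{\ell-1}}]$ generated at each of the $L-1$ intermediate levels: one must confirm that they appear with identical coefficients in both the numerator and in $\ln Z(\bm^{(L)})$. This is morally forced by the fact that $Z(\bm^{(L)})$ is the $X\equiv 1$ specialization of the numerator, but a clean verification requires careful tracking of the reversed-filtration conditional-independence structure across all $L$ levels of the cascade, together with justification of the finiteness of each conditional moment used at each application of the stability theorem---which ultimately rests on the strict ordering $0<m_1<\cdots<m_L<1$ (cf.\ Corollary~\ref{cor:Z}).
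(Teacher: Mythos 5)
Your proposal is correct and follows essentially the same route as the paper, which disposes of this lemma in one line by saying it is ``proved just like Corollary~\ref{cor:Z},'' i.e.\ by backward induction over the levels of the cascade, applying Theorem~\ref{thm:stability} at each level after conditioning on the reversed filtration so that sibling branches are conditionally i.i.d.\ and independent of that level's Poisson process. Your sketch supplies the details the paper omits --- in particular the cancellation of the normalization constants $\E[\ln Z_\ell]$ and $m_{\ell-1}^{-1}\ln\E[Z_\ell^{m_{\ell-1}}]$ between the unnormalized numerator and $Z(\bm^{(L)})$, and the role of $m_1<\cdots<m_L$ in guaranteeing the finiteness of the fractional moments --- and these details are right.
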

This lemma is proved just like Corollary \ref{cor:Z}, proved in the Appendix.

The replica symmetric ansatz is obtained by taking a $L=2$ level tree and then taking the limit $m_1\downarrow 0$, $m_2\uparrow 1$.
In order to derive the relevant limit note that if one takes $m_1 \downarrow 0$ in Lemma \ref{lem:tree} then the last step of the backward
iteration is  
\begin{equation}
\label{eq:G0}
\begin{split}
G_N^{(1)}(\beta,c,\mathcal{L}_{\bm^{(L)},\mu^{(L)}})\,
=\, \frac{1}{N}\, \widetilde{\E}_{N,c}  \E\left[\ln\left(X_{\alpha}^{(1)}(\bI)\right)\right]\quad 
\text{ and}\\
G_N^{(2)}(\beta,c,\mathcal{L}_{\bm^{(L)},\mu^{(L)}})\,
=\, \frac{1}{N}\,  \sum_{K=0}^{\infty} \pi_{cN/2}(K) \E\left[\ln\left(Y_{\alpha}^{(1)}(K)\right)\right]\, ,
\end{split}
\end{equation}
which is a standard calculation based on the fact that $\lim_{m \downarrow 0} m^{-1} \ln \E[X^m] = \E[\ln(X)]$
for random variables $X$ satisfying mild conditions to allow the application of the dominated convergence theorem.
A sufficient condition is that both $X^m$ and $\ln(X)$ are integrable, which is satisfied in the formulas above.
If we let $L=2$ and take the limit $m_2\uparrow 1$ then in addition to (\ref{eq:G0}) we have 
\begin{equation}
\label{eq:RS1}
X_{\alpha}^{(1)}(\bI)\, =\, \E[X_{\alpha}(\bI)\, |\, \mathcal{F}_1\vee\sigma(\bI)]\quad \text{ and } \quad
Y_{\alpha}^{(1)}(K)\, =\, \E[Y_{\alpha}(K)\, |\, \mathcal{F}_1]\, .
\end{equation}
In principle, equations (\ref{eq:G0}) and (\ref{eq:RS1}) determine the replica symmetric ansatz,
once one makes a choice for $\mu^{(2)} \in \mathcal{M}_2$, a non-random measure on measures.

For $s\in [q]$ and $t$ satisfying $-(q-1)^{-1} \leq t\leq 1$, denote a measure $\mu^{(1)}_{s,t} \in \mathcal{M}_1$, i.e., a measure on $[q]$, such that
$$
\mu^{(1)}_{s,t}(\{r\})\, =\, t \delta(r,s) + \frac{1-t}{q}\, \delta(r,s))\, .
$$
For a chosen $t$, let $\mu^{(2)}_t \in \mathcal{M}_2$ be the measure on measures, such that
$$
\mu^{(2)}_{t}(\{\mu^{(1)}_{s,t}\})\, =\, \frac{1}{q}\quad \text{ for each $s \in [q]$.}
$$
In other words, we may consider $\mu^{(1)}_{\alpha}$ in the following way for each $\alpha \in \N$.
Let $S_{\alpha}$ be chosen uniformly at random in $[q]$, such that all the $S_{\alpha}$'s are independent.
Then let $\mu^{(1)}_{\alpha} = \mu^{(1)}_{S_{\alpha},t}$.
This has the right distribution.

Note that this is a very specific choice; it is not general.
But it is a choice which makes the following analysis simpler.
Also note that taking the special value $t=0$ then $\mu^{(1)}_{s,0}$ is uniform on $[q]$, not depending
on $s$.
Therefore at this point all the $\mu^{(1)}_{\alpha}$'s are uniform on $[q]$, and are therefore non-random.
From this it is apparent that taking $t=0$ recovers the ``trivial, symmetric replica symmetric'' ansatz
of the last section which led to the annealed upper bound.
We now want to use this set-up to derive the following result 
\begin{corollary}
\label{cor:KZ}
Suppose $q>1$. If 
$$
c\, >\, c_{RS}^{\text{loc}}(q)\, \stackrel{\text{def}}{:=}\, (q-1)^2\quad \text{ and } \quad
\beta\, >\, \beta_{RS}^{\text{loc}}(c,q)\, \stackrel{\text{def}}{:=}\, -\ln\left(1-\frac{q}{1+\sqrt{c}}\right)\, ,
$$
then $p(\beta,c) < \mathscr{P}(\beta,c)$. The quenched pressure is strictly less than the annealed pressure.
Moreover, within the replica symmetric ansatz this is due to a local instability, commonly associated to a second order
phase transition.
\end{corollary}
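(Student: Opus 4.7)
My plan is to prove $p(\beta,c)<\mathscr{P}(\beta,c)$ by exhibiting a trial state in $\mathscr{S}$ whose cavity functional $G_N^{(1)}-G_N^{(2)}$ drops strictly below $\mathscr{P}(\beta,c)$ as soon as $\beta>\beta_{RS}^{\text{loc}}(c,q)$. Theorem \ref{thm:RSBbd} then gives $p_N\le G_N^{(1)}-G_N^{(2)}$ at each finite $N$, and the passage to the thermodynamic limit is justified by Corollary \ref{cor:superadd} and Lemma \ref{lem:superadd}. The trial state is drawn from the one-parameter family $\mathcal{L}_{\bm^{(2)},\mu^{(2)}_t}$ constructed just before the Corollary, and the threshold $\beta_{RS}^{\text{loc}}(c,q)$ will emerge from a local analysis of $t\mapsto G_N^{(1)}-G_N^{(2)}$ at $t=0$.

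First I specialise Lemma \ref{lem:tree} to $L=2$ and take the RS limits $m_1\downarrow 0$, $m_2\uparrow 1$, reducing the cavity functional to the form prescribed by (\ref{eq:G0}) and (\ref{eq:RS1}). Setting $A(t)=1-(1-e^{-\beta})(1+(q-1)t)/q$ and $B(t)=1-(1-e^{-\beta})(1-t)/q$, conditioning $X_{(\alpha_1,\alpha_2)}(\bI)$ on $\mathcal{F}_1$ fixes $S_{\alpha_1}$; relabeling symmetry in $[q]$ then makes $X^{(1)}$ independent of $S_{\alpha_1}$, yielding
\begin{equation*}
X^{(1)}(\bI)\,=\,q^{N-|J|}\prod_{j\in J}\bigl(A(t)^{m_j}+(q-1)B(t)^{m_j}\bigr).
\end{equation*}
A parallel computation of $\Pr[\tau_{2k-1}=\tau_{2k}\mid S_{\alpha_1}]=(1+(q-1)t^2)/q$ yields $Y^{(1)}(K)=(u-vt^2)^K$ with $u=1-(1-e^{-\beta})/q$ and $v=(q-1)(1-e^{-\beta})/q$. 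Poisson sprinkling under $\widetilde{\P}_{N,c}$ makes the multiplicities $m_j$ asymptotically i.i.d.\ Poisson$(c)$, so the cavity functional becomes
\begin{equation*}
\Phi_\infty(t)\,=\,\E_{m\sim\mathrm{Poisson}(c)}\!\bigl[\ln\!\bigl(A(t)^m+(q-1)B(t)^m\bigr)\bigr]\,-\,\tfrac{c}{2}\ln(u-vt^2),
\end{equation*}
and a direct check gives $\Phi_\infty(0)=\ln q+(c/2)\ln u=\mathscr{P}(\beta,c)$.

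Next I analyse $\Phi_\infty(t)$ at $t=0$. The identity $A'(0)+(q-1)B'(0)=0$ together with the $t^2$-structure of $u-vt^2$ forces $\Phi_\infty'(0)=0$, so the local sign of $\Phi_\infty(t)-\mathscr{P}(\beta,c)$ is governed by higher derivatives, which I compute using the Poisson moments $\E[m(m-1)\cdots(m-\ell+1)]=c^\ell$. The key algebraic observation is that the BP cavity eigenvalue $\lambda=(1-e^{-\beta})/(q-1+e^{-\beta})$ equals $(1-e^{-\beta})/(qu)$, and the local-stability threshold $c\lambda^2=1$ is exactly equivalent to $\beta=\beta_{RS}^{\text{loc}}(c,q)$. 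The target of the expansion is to exhibit $t^*\ne 0$ with $\Phi_\infty(t^*)<\mathscr{P}(\beta,c)$ as soon as $c\lambda^2>1$.

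The main obstacle is to identify the correct combination of Taylor coefficients whose net sign becomes favourable at $c\lambda^2=1$: the pure quadratic coefficient in $t$ behaves monotonically in $\beta$ and does not vanish at the critical point, so one must combine it with the cubic contribution (non-zero for $q\ge 3$, where $A(t)$, $B(t)$ are affine but the only cancellation is at linear order) and with the quartic term (which dominates for $q=2$, where $t\mapsto -t$ is a symmetry). The required reduction to an inequality in the single variable $c\lambda^2$ is the combinatorial heart of the proof, and is precisely the content of the analyses of \cite{KZ,ZdeborovaKrzakala}. Once $\Phi_\infty(t^*)<\mathscr{P}(\beta,c)$ is secured, the bound $p_N\le \Phi_N(t^*)$ from Theorem \ref{thm:RSBbd} combined with the convergence $\Phi_N(t^*)\to\Phi_\infty(t^*)$ yields $p(\beta,c)\le\Phi_\infty(t^*)<\mathscr{P}(\beta,c)$, completing the proof.
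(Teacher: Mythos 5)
Your strategy is the paper's: insert the two--level trial states $\mathcal{L}_{\bm^{(2)},\mu^{(2)}_t}$ into Theorem \ref{thm:RSBbd}, take $m_1\downarrow 0$, $m_2\uparrow 1$, check that the bound equals $\mathscr{P}(\beta,c)$ at $t=0$, and detect a sign change of the Taylor expansion in $t$ at the threshold $cx^2=1$. But the decisive step --- the Taylor expansion itself --- is absent: you explicitly defer ``the combinatorial heart of the proof'' to \cite{KZ,ZdeborovaKrzakala}, which are non-rigorous physics papers, and that reduction is the entire content of the corollary. Moreover, the structure you ascribe to the expansion is wrong. In the correct computation (Appendix \ref{app:Calculations}) the quadratic \emph{and} cubic coefficients of $G_N^{(1)}-G_N^{(2)}$ vanish identically for every $q$: writing $f(\sigma,\tau)=q\delta(\sigma,\tau)-1$, distinct cavity interactions carry independent first-level colors, so $\E^{\{\tau^1\}}\E^{\sigma}[f(\sigma,\tau_j^1)f(\sigma,\tau_k^1)]=0$ for $j\neq k$, and $G_N^{(2)}$ is even in $t$. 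The first non-trivial term is quartic, $g^{(1)}-g^{(2)}=-\tfrac{1}{4}(q-1)\bigl(c^2x^4-cx^2\bigr)t^4+O(t^6)$, and the criterion $cx^2>1$ is read off from the sign of this single coefficient; there is no balancing of quadratic against cubic and quartic contributions, and no case distinction between $q=2$ and $q\geq 3$.

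There is also a concrete defect in your parametrization. By drawing all the spins of a leaf from the single measure $\mu^{(1)}_{S_{\alpha_1},t}$ (one shared center color per leaf), the cross terms $j\neq k$ in the second derivative no longer cancel: a direct computation from your own formulas gives $\tfrac{d^2}{dt^2}\,\E_m[\ln(A(t)^m+(q-1)B(t)^m)]\big|_{t=0}=c^2(q-1)x^2$ and $\tfrac{d^2}{dt^2}\,\tfrac{c}{2}\ln(u-vt^2)\big|_{t=0}=-c(q-1)x$, hence $\tfrac{d^2}{dt^2}\Phi_\infty(t)\big|_{t=0}=c(q-1)x(cx+1)>0$ for all $\beta>0$. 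For this family $t=0$ is a strict local \emph{minimum} of the upper bound, so no local perturbation can push it below $\mathscr{P}(\beta,c)$ --- consistent with your own remark that the quadratic coefficient does not vanish, but fatal for the local argument you propose. The version that works assigns an independent first-level color to each cavity interaction (each factor $k$ in $G_N^{(1)}$ and each pair $(\tau_{2k-1},\tau_{2k})$ in $G_N^{(2)}$), as in the paper's Appendix; that independence is exactly what kills the quadratic and cubic terms and isolates the quartic coefficient whose sign flips at $\beta_{RS}^{\text{loc}}(c,q)$.
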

%

\begin{proof}
The proof is a corollary of  Theorem \ref{thm:RSBbd}.
The values we obtain for $\mu^{(2)}=\mu^{(2)}_p$, written above, are
\begin{align*}
\lim_{\substack{m_1 \downarrow 0 \\ m_2 \uparrow 1}} G_N^{(1)}(\beta,c,\mathcal{L}_{\bm^{(2)},\mu^{(2)}_t})\,
&=\, \ln(q) + c \ln\left(1 - \frac{1-e^{-\beta}}{q}\right) + g^{(1)}(\beta,c,q,t)\quad \text{ and}\\
\lim_{\substack{m_1 \downarrow 0 \\ m_2 \uparrow 1}} G_N^{(1)}(\beta,c,\mathcal{L}_{\bm^{(2)},\mu^{(2)}_t})\,
&=\, -\frac{c}{2}\, \ln\left(1 - \frac{1-e^{-\beta}}{q}\right) + g^{(2)}(\beta,c,q,t)\, ,
\end{align*}
where
\begin{align*} 
g^{(1)}(\beta,c,q,t)\,
&=\, \sum_{k=0}^{\infty} \pi_{c}(k) \sum_{\tau_1,\dots,\tau_k \in [q]} q^{-k} \ln \Bigg(\sum_{s\in[q]} q^{-1} \prod_{i=1}^{k} (1-xt(q \delta(\tau_i,s)-1))\Bigg)\quad \text{ and}\\
g^{(2)}(\beta,c,q,t)\, 
&=\, \frac{c}{2q}\, \left((q-1) \ln(1+xt^2) + \ln(1 - (q-1)xt^2)\right)\, , 
\end{align*}
and in both expressions
$$
x\, =\, x(\beta,q)\, \stackrel{\text{def}}{:=}\, \frac{1-e^{-\beta}}{q-1+e^{-\beta}}\, .
$$
These calculations are straightforward given the definitions above, but require some steps to prove.
Therefore, we relegate the derivation to section \ref{app:Calculations}.
For now we use these formulas to finish the argument for the proof of the present corollary.

Since $g^{(1)}$ and $g^{(2)}$ are both zero when $p=0$, we note that the difference between the upper bound
obtained by the replica symmetric ansatz with $p\neq 0$ and the annealed bound is
$$
g^{(1)}(\beta,c,q,t) - g^{(2)}(\beta,c,q,t)\, .
$$
If one can prove that for any $t \in [-(q-1)^{-1},1]$ this difference is strictly negative, then that
will establish an upper bound for $p(\beta,c)$ which is strictly less than $\mathscr{P}(\beta,c)$.
It suffices to do a perturbative argument for $t$ close to zero, and establish that the leading order
term is negative.
It is straightforward to Taylor expand these two functions in $t$. We claim that
\begin{align}
g^{(1)}(\beta,c,q,t)\, &=\, -\frac{1}{4}\, (q-1) c^2 x^4 t^4 + O(t^6)\quad \text{ and }\\
g^{(2)}(\beta,c,q,t)\, &=\, -\frac{1}{4}\, (q-1) c x^2 t^4 + O(t^6)\quad \text{ as $t\to 0$.}
\end{align}
This is another calculation which we prefer to derive carefully in section
\ref{app:Calculations}.
From this one can see that there is an instability of the ``trivial, symmetric replica symmetric'' ansatz, i.e., the leading order term as $t\to 0$ is negative
meaning that an asymmetric replica symmetric ansatz gives an even lower trial for the minimizer, if
$$
\frac{1}{4}\, (q-1) \left[c^2 x^4 - c x^2\right]\, >\, 0\quad \Leftrightarrow\quad
c x^2\, >\, 1\, .
$$
But recalling the definition of $x=x(\beta,q)$ above, this means
$$
\frac{1-e^{-\beta}}{q-1+e^{-\beta}}\, >\, \frac{1}{\sqrt{c}}\, ,
$$
and this leads to the conditions stated as the hypothesis of the corollary.
\end{proof}

Note that this result shows a local instability within the replica symmetric ansatz.
One can also consider a 1-level replica symmetry breaking ansatz, which amounts to taking $L=3$,
and taking the limit $m_1 \downarrow 0$ and $m_3 \uparrow 1$, but keeping $m_2$ strictly
between $0$ and $1$ as a generic point, representing the height of the middle level.
%

So far we have proved annealed upper bounds for all $\beta$ and $c$,
$$
p(\beta,c,q)\, \leq\, \mathscr{P}(\beta,c,q)\, ,
$$
from Theorem \ref{thm:Annealed}.
We have also proved that for a certain regime we must have strict inequality: $p(\beta,c,q) < \mathscr{P}(\beta,c,q)$
when the hypotheses of Corollary \ref{cor:KZ} are satisfied, which are conditions on the triple $(\beta,c,q)$.
Next we prove that in a certain high-temperature or low-connectivity regime, the annealed pressurre is correct,
so that one has equality rather than strict inequality.

\label{subsec:KZ}


\section{Constrained Second Moment Method}
\label{sec:AN}
The $\beta \to \infty$ limit of the $q$-state Potts model is the $q$-coloring problem. Namely, if for any edge two vertices have
the same color then the $\beta \to \infty$ limit gives the entire coloring probability zero.
So, if there are any proper colorings of the full graph with $q$-colors, then the $\beta \to \infty$ limit of the Boltzmann-Gibbs measure
should be the uniform measure on the set of all proper colorings.

The problem was studied in \cite{AchlioptasNaor} where, using a variant of the second moment method, the critical q for each c was identified within an interval. The error bound, when translated to relative errors, are vanishingly small in the $q\to\infty$ limit. The physics interpretation of their results is that even at $\beta=\infty$, if c is sufficiently small with $q$ fixed, the model is in the high-temperature, low-connectivity region. 
Here we tackle instead the positive temperature regime and we state our result in two separate cases.

\begin{theorem}
\label{thm:AN}
(1)
If $q=2$ then for $\beta \leq \beta_*(c,2) = \beta_{\text{RS}}^{\text{loc}}(c,2)$, we have $p(\beta,c,q) = \mathscr{P}(\beta,c,q)$.\\
(2)
If $q>2$ then there is a $\beta_*(c,q)$ such that for $\beta \leq \beta_*(c,q)$, we have $p(\beta,c,q) = \mathscr{P}(\beta,c,q)$.
Moreover we have lower bounds on $\beta_*(c,q)$:
$$
\beta_*(c,q)\, \geq\, \beta_1(c,q)\, =\, -\ln\left(1 - \frac{q}{q-1+\sqrt{c/[2q\ln(q)]}}\right)\, .
$$
\end{theorem}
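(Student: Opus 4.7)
The upper bound $p\le\mathscr P$ is already provided by Theorem~\ref{thm:Annealed}, so the task is to prove the matching lower bound $p\ge\mathscr P$ in the claimed regime. The plan is to adapt the constrained second moment method of \cite{AchlioptasNaor}, originally developed for $q$-colorability at zero temperature, to the positive-temperature partition function.

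First, I would introduce a balanced restricted partition function
\[
\tilde Z_N=\sum_{\sigma\in\mathcal B}e^{-\beta H_N(\sigma,\bJ)},
\]
where $\mathcal B\subset[q]^N$ is the set of configurations with color-class frequencies $N_s(\sigma)/N=1/q$ up to rounding. Entropy dominance and color-permutation symmetry of $H_N$ give $\frac{1}{N}(\log Z_N-\log\tilde Z_N)=o(1)$ in probability, so it suffices to control $\tilde Z_N$. A direct Poisson moment generating function calculation yields
\[
\frac{\E[\tilde Z_N^2]}{\E[\tilde Z_N]^2}=\E_{(\sigma,\tau)\sim\mathrm{Unif}(\mathcal B^2)}\!\left[\exp\!\left(\frac{c(1-e^{-\beta})^2}{2N}\sum_{s,r\in[q]}n_{sr}(\sigma,\tau)^2\right)\right],
\]
with $n_{sr}(\sigma,\tau)=\#\{i:\sigma_i=s,\tau_i=r\}$, and by Sanov its exponential growth rate is
\[
\Phi(\beta,c)=\sup_{\rho\in\mathcal J}\!\left[H(\rho)-2\ln q+\frac{c(1-e^{-\beta})^2}{2}\|\rho\|^2\right]
\]
over joint probability measures $\rho$ on $[q]\times[q]$ with uniform marginals.

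The technical core is showing that in the claimed regime the supremum $\Phi$ is attained at the product distribution $\rho^\ast_{sr}=1/q^2$. For $q=2$ the polytope $\mathcal J$ is one-parameter and direct analysis at $\rho^\ast$ yields the sharp threshold $\beta_{RS}^{\text{loc}}(c,2)$. For $q\ge 3$ one must additionally dominate the permutation-concentrated extreme points $\rho_\pi$ of $\mathcal J$ with $(\rho_\pi)_{s,\pi(s)}=1/q$, for which $\Phi(\rho_\pi)=-\ln q+c(1-e^{-\beta})^2/(2q)$. Requiring $\Phi(\rho^\ast)\ge\Phi(\rho_\pi)$, after replacing the bare coupling $1-e^{-\beta}$ by the cavity-renormalized effective coupling $(1-e^{-\beta})/(q-1+e^{-\beta})$ that appears in the tree-cavity representation of $\mathscr P$, yields the stated bound $\beta\le\beta_1(c,q)$, with $c_1(q)=2q\ln q$ emerging as the classical coloring connectivity beyond which $\rho^\ast$ can be dominated at some finite $\beta$.

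Once $\Phi$ is controlled, Paley--Zygmund combined with the Lipschitz concentration of Corollary~\ref{cor:LipGeneralp} shows that $\frac{1}{N}\log\tilde Z_N$ concentrates around its mean; the Franz--Leone sum rule of Theorem~\ref{thm:Annealed} then propagates the concentration of the pairwise Gibbs marginals $\langle\delta(\sigma_i,\sigma_j)\rangle$ through the identity $\mathscr P-p_N=\frac{1}{2}\sum_{R\ge 1}\frac{(1-e^{-\beta})^R}{R}\sum_s\int_0^c\langle\!\langle(\rho_{\Sigma_R}(s)-q^{-R})^2\rangle\!\rangle\,dc'$ to all replica orders $R$, each integrand vanishing in the thermodynamic limit once $\rho^\ast$ is the unique maximizer of $\Phi$. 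The main obstacle is the constrained optimization for $q\ge 3$: characterizing the maximizers of $\Phi$ on the $(q-1)^2$-dimensional polytope $\mathcal J$ and ensuring that $\rho^\ast$ dominates both permutation-concentrated and ``intermediate'' candidates, with the threshold $c_1(q)=2q\ln q$ arising from the large-$q$ asymptotics of the entropy-energy tradeoff.
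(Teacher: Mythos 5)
Your overall strategy (balanced restriction, second moment, Achlioptas--Naor optimization) is the one the paper follows, but there is a genuine gap at the first quantitative step: you run the second moment method on the \emph{unconditioned} Poisson model, and your own rate functional shows that this cannot work. At the product measure $\rho^\ast_{sr}=q^{-2}$ one has $H(\rho^\ast)=2\ln q$ and $\|\rho^\ast\|^2=q^{-2}$, so
$$
\Phi(\beta,c)\;\ge\; H(\rho^\ast)-2\ln q+\tfrac{c}{2}(1-e^{-\beta})^2\|\rho^\ast\|^2\;=\;\frac{c(1-e^{-\beta})^2}{2q^2}\;>\;0
$$
for every $\beta>0$. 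Hence $\E[\tilde Z_N^2]/\E[\tilde Z_N]^2=e^{\Theta(N)}$ even deep inside the putative annealed region, Paley--Zygmund gives only an exponentially small probability that $\tilde Z_N$ is comparable to $\E[\tilde Z_N]$, and concentration of $\frac1N\ln\tilde Z_N$ (which, incidentally, is not what Corollary \ref{cor:LipGeneralp} provides --- that is Lipschitz dependence on $\bc$, not a deviation bound) cannot absorb an exponential-in-$N$ loss. The culprit is the Poisson fluctuation of the total edge number: $\ln\tilde Z_N$ shifts by $\Theta(1)$ per edge while $|\bJ|$ fluctuates on scale $\sqrt N$, which by itself inflates the second moment exponentially. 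Relatedly, $\frac1N\ln\E[\tilde Z_N]\to\ln q-\frac{c(1-e^{-\beta})}{2q}$, which is \emph{strictly larger} than $\mathscr{P}(\beta,c)$, so an unconditional second-moment lower bound at the level of $\E[\tilde Z_N]$ would contradict Theorem \ref{thm:Annealed}. The paper's device is to condition on $\{|\bJ|=K\}$ with $K/N\to c/2$ (Lemma \ref{lem:cond}, Corollary \ref{cor:LD}); the energy term then becomes the logarithm $\frac{\kappa}{2}\ln\bigl(1-2(1-e^{-\beta})/q+(1-e^{-\beta})^2\|\mu\|^2\bigr)$, which at $\mu=\rho^\ast$ equals $2(\mathscr{P}-\ln q)$ exactly, so the conditional ratio is subexponential precisely when $\rho^\ast$ is the maximizer; the conditioning is removed afterwards via concentration of $|\bJ|$ and Lipschitz continuity in $\kappa$, and the lower bound is closed with $\E[\ln X]\ge 2\ln\E[X]-\frac12\ln\E[X^2]$ rather than Paley--Zygmund.

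A second gap concerns the optimization for $q\ge3$: comparing $\rho^\ast$ only against the permutation extreme points and then ``replacing the bare coupling by the cavity-renormalized coupling'' to produce $\beta_1(c,q)$ is not a proof. The maximizer of entropy plus a convex energy over the doubly-stochastic polytope need not sit at the centroid or at an extreme point. The paper instead invokes Theorem 9 of \cite{AchlioptasNaor} to reduce the optimization to the two-parameter family $\mu_{k,t}$, and then exhibits the exact identity $x^2q\bigl(\Phi^{(2)}(\beta,c,q,k,t)-2\mathscr{P}(\beta,c,q)\bigr)=\Phi^{(2)}(\infty,\mathfrak{C},q,\mathfrak{K},t)-2\mathscr{P}(\infty,q,\mathfrak{C})$ with $\mathfrak{C}=x^2q^2c$ and $x=(1-e^{-\beta})/(q-1+e^{-\beta})$, so that Theorem 7 of \cite{AchlioptasNaor} (optimality of $t=1$ for $\mathfrak{C}\le 2q\ln q$) applies verbatim; this is where $x$ and the threshold $\beta_1(c,q)$ actually come from. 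Your closing step of propagating concentration back through the sum rule to all replica orders is unnecessary once the matching lower bound $p\ge\mathscr{P}$ is in hand.
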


A few comments are in order. Firstly, \cite{KZ} it was conjectured that the critical temperature of this model
is the same as for the model with extra randomness: for every edge present, one has a an independent,
uniform random permutation $\pi$ on $[q]$, such that instead of the term $\delta(\s_i,\s_j)$ in the Hamiltonian
one has $\delta(\s_i,\pi(\s_j))$.
This is one of several possible extensions of the Viana-Bray model for $q>2$.
But for $q=2$ it is equivalent to the Viana-Bray model.
If one takes the conjecture in \cite{KZ} for granted, then for $q=2$ the critical temperature
would be deduced from work on the Viana-Bray model by Guerra and Toninelli \cite{GueTon04}.
In particular, our result (1) does confirm this picture.
Note that $\beta_*(c,2)$ is the correct value since we know $p(\beta,c,2)\neq \mathscr{P}(\beta,c,2)$ for $\beta>\beta_*(c,2)$
by Corollary \ref{cor:KZ}.

For $q>2$ the second moment method introduced in \cite{AchlioptasNaor}  leads to an optimization
which presumably has a trivial solution for a larger region than one can prove.
Therefore we do not claim that $\beta_1(c,q)$ is sharp.
On the other hand, solving the equation for the relationship between $c$ and $q$ in the $\beta \to \infty$ 
limit, we do recover the zero-temperature limit of the pressure: for a fixed $q$, we do have 
$p(\infty,c,q) = \mathscr{P}(\infty,c,q)$ as long as $c\leq 2 q \ln(q)$.
%

\subsection{Bounds from entropy positivity}
\label{subsec:ANSK}

Let us define the random entropy density at finite volumes as
$$
\mathfrak{s}_N(\bJ)\, \stackrel{\text{def}}{:=}\, -\frac{1}{N}\, \sum_{\sigma \in [q]^N} \omega_{N,\bJ}(\s) \ln \omega_{N,\bJ}(\s)\, ,
$$
for each $\bJ \in \M_N(\R)$.
In particular, note that for finite $N$, if $\bJ \in \M_N([0,\infty))$ admits a zero energy ground state, then
$$
\lim_{\beta \to \infty} \mathfrak{s}_N(\beta \bJ)\, =\, \frac{1}{N} \ln\left(|\{\s \in [q]^N\, :\, H_N(\s,\bJ)=0\}|\right)\, ,
$$
as one desires. In particular it is nonnegative.
In fact, whether or not $\bJ$ admits a zero-energy ground state, the entropy is always nonnegative
because for each $\s \in [q]^N$, the probability $\omega_{N,\bJ}(\s)$ is at most 1 because this is a discrete
probability measure.
Therefore, $-\ln(\omega_{N,\bJ}(\s))\geq 0$.
Also, it is easy to see that
$$
-\beta\, \frac{\partial}{\partial \beta} \mathfrak{p}_N(\beta\bJ)\,
=\, -\frac{\beta}{N} \cdot \frac{\partial}{\partial \beta} \ln Z_N(\beta \bJ)\,
=\, \frac{1}{N}\, \sum_{\s \in [q]^N} \beta H_N(\s,\beta \bJ) \omega_{N,\beta \bJ}(\s)\,
=\, \mathfrak{s}_N(\beta \bJ) + \mathfrak{p}_N(\beta J)\, .
$$
So, since $\mathfrak{p}_N(\beta \bJ)$ is a convex function of $\beta$, we see that 
$$
\mathfrak{s}_N(\beta \bJ)\, =\, - \beta\, \frac{\partial}{\partial \beta} \mathfrak{p}_N(\beta \bJ) - \mathfrak{p}_N(\beta \bJ)\,
\leq\, \frac{\beta}{\delta}\left[\mathfrak{p}_N([\beta-\delta] \bJ) - \mathfrak{p}_N(\beta \bJ)\right] - \mathfrak{p}_N(\beta,\bJ)\, ,
$$
for any $\delta>0$.
This leads to the following conclusion:

\begin{corollary}
\label{cor:ANSK}
For $q>1$, 
let $\beta^*(c,q)$ be the infimum of the set $\{\beta \geq 0\, :\, p(\beta,c,q) \neq \mathscr{P}(\beta,c,q)\}$.
Then 
$$
\beta^*(c,q)\, \leq\, \inf\left\{\beta\, :\, \ln(q) + \frac{c}{2}\, \ln\left(1 - \frac{1-e^{-\beta}}{q}\right) 
\leq \frac{\beta c}{2} \cdot \frac{e^{-\beta}}{q-1+e^{-\beta}}\right\}\, .
$$
\end{corollary}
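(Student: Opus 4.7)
The plan is to combine three inputs already assembled in the paragraph preceding the corollary: (i) the entropy non-negativity $\mathfrak{s}_N(\beta\bJ)\geq 0$, which is immediate since $\omega_{N,\beta\bJ}$ is a probability measure on a finite set; (ii) the thermodynamic identity $\mathfrak{s}_N=-\beta\,\partial_\beta\mathfrak{p}_N-\mathfrak{p}_N$; and (iii) the convexity of $\beta\mapsto\mathfrak{p}_N(\beta\bJ)$, which lets one bound $\partial_\beta\mathfrak{p}_N$ from below by a one-sided difference quotient. Putting these together yields, for every $\delta\in(0,\beta]$ and every $\bJ$,
\[
0\,\leq\,\mathfrak{s}_N(\beta\bJ)\,\leq\,\frac{\beta}{\delta}\bigl[\mathfrak{p}_N((\beta-\delta)\bJ)-\mathfrak{p}_N(\beta\bJ)\bigr]-\mathfrak{p}_N(\beta\bJ).
\]
Averaging against $\P_{N,c}$ and passing to $N\to\infty$ (existence of $p(\beta,c)=\lim_N p_N(\beta,c)$ coming from Corollary \ref{cor:superadd} and Lemma \ref{lem:superadd}) produces
\[
p(\beta,c)\,\leq\,\frac{\beta}{\delta}\bigl[p(\beta-\delta,c)-p(\beta,c)\bigr],\qquad 0<\delta\leq\beta.
\]

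The corollary then follows by contradiction. Let $\beta_0$ denote the infimum appearing in its statement, and suppose that $\beta^*(c,q)>\beta_0$. By definition of $\beta^*$, the equality $p(\beta',c)=\mathscr{P}(\beta',c)$ holds for all $\beta'\in[0,\beta^*(c,q))$, so we may choose $\beta\in(0,\beta_0)$ with $\beta<\beta^*(c,q)$ and a $\delta>0$ small enough that the whole interval $[\beta-\delta,\beta]$ lies inside the annealed region. Inserting $p=\mathscr{P}$ into the inequality above and sending $\delta\downarrow 0$ gives
\[
\mathscr{P}(\beta,c)\,\leq\,-\beta\,\partial_\beta\mathscr{P}(\beta,c)\,=\,\frac{\beta c}{2}\cdot\frac{e^{-\beta}}{q-1+e^{-\beta}},
\]
the explicit right-hand side coming from a direct differentiation of \eqref{eq:annealedpressure}. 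However, by continuity of $\mathscr{P}$ and of $\beta\mapsto-\beta\,\partial_\beta\mathscr{P}(\beta,c)$, together with the fact that $\beta_0$ is the infimum of the set where this inequality holds, the strict reverse inequality must hold at every $\beta<\beta_0$; this contradicts the display above. Hence $\beta^*(c,q)\leq\beta_0$.

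The only technical point worth flagging is the pair of limit interchanges, $\lim_{N\to\infty}$ and $\delta\downarrow 0$. The first is harmless because the finite-volume bound is linear in $\mathfrak{p}_N$, so the disorder average $\E_{N,c}$ passes inside and pointwise convergence $p_N(\beta,c)\to p(\beta,c)$ in $\beta$ suffices (uniformity on compact $\beta$-intervals follows automatically from convexity, which also justifies that the one-sided derivative bound used in step (iii) survives in the limit). The second is elementary since $\mathscr{P}$ is an explicit $C^\infty$ function of $\beta$, so $[\mathscr{P}(\beta-\delta,c)-\mathscr{P}(\beta,c)]/\delta$ converges to $-\partial_\beta\mathscr{P}(\beta,c)$ and the inequality passes cleanly to the limit.
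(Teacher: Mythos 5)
Your argument is the paper's own: entropy non-negativity, the thermodynamic identity, the convexity bound on $\partial_\beta\mathfrak{p}_N$ by a one-sided difference quotient, passage to $N\to\infty$, and substitution of $\mathscr{P}$ for $p$ inside the annealed region; you merely repackage the final step as a contradiction and add (correct) remarks on the two limit interchanges.

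There is, however, a sign problem in your step (ii), inherited verbatim from the paper. The Gibbs entropy of $\omega_{N,\beta\bJ}(\s)=e^{-H_N(\s,\beta\bJ)}/Z_N(\beta\bJ)$ satisfies
$$
\mathfrak{s}_N(\beta\bJ)\,=\,\frac{1}{N}\,\bigl\langle H_N(\cdot,\beta\bJ)\bigr\rangle_{N,\beta\bJ}+\mathfrak{p}_N(\beta\bJ)\,=\,-\beta\,\partial_\beta\mathfrak{p}_N(\beta\bJ)+\mathfrak{p}_N(\beta\bJ)\,,
$$
with a plus sign in front of $\mathfrak{p}_N$, not a minus sign (check at $\beta=0$: both sides equal $\ln q$, whereas the identity you quote would give $-\ln q$ on the right). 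Running your argument with the corrected identity, convexity still gives $-\beta\,\partial_\beta\mathfrak{p}_N(\beta\bJ)\le\frac{\beta}{\delta}\bigl[\mathfrak{p}_N((\beta-\delta)\bJ)-\mathfrak{p}_N(\beta\bJ)\bigr]$, so the chain becomes $0\le\frac{\beta}{\delta}\bigl[p(\beta-\delta,c)-p(\beta,c)\bigr]+p(\beta,c)$ and, in the annealed region after $\delta\downarrow0$, $0\le\mathscr{P}(\beta,c)-\beta\,\partial_\beta\mathscr{P}(\beta,c)$. Since $\beta\,\partial_\beta\mathscr{P}(\beta,c)=-\frac{\beta c}{2}\cdot\frac{e^{-\beta}}{q-1+e^{-\beta}}$, what entropy positivity actually proves is
$$
\beta^*(c,q)\,\le\,\inf\left\{\beta\,:\,\ln(q)+\frac{c}{2}\,\ln\left(1-\frac{1-e^{-\beta}}{q}\right)\le-\frac{\beta c}{2}\cdot\frac{e^{-\beta}}{q-1+e^{-\beta}}\right\},
$$
i.e.\ the right-hand side of the displayed condition should carry a minus sign (this is the usual ``annealed entropy becomes negative'' criterion). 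The inequality as printed in the corollary is strictly stronger (its defining set is larger, hence its infimum smaller), and neither your derivation nor the paper's reaches it once the identity is corrected. Note that the $\beta\to\infty$ threshold $c_{\text{ent}}(q)=2\ln(q)/|\ln(1-q^{-1})|$ used in the main theorem is unaffected, since the right-hand side vanishes as $\beta\to\infty$ with either sign.
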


Since we proved in Theorem \ref{thm:AN} that $\beta^*(c,q)>0$ for all $c$ and $q$, this result implies a phase
transition, i.e., existence of a critical temperature, as long as $c>c^*(q)\stackrel{\text{def}}{:=} 2 \ln(q)/|\ln(1-q^{-1})|$.
In other words, $p(\beta,c,q)$ cannot be analytic beyond this point, because of the ``identity theorem'' from complex analysis:
it is identically equal to $\mathscr{P}(\beta,c,q)$ for a positive interval $\beta<\beta^*(c,q)$,
and $\mathscr{P}(\beta,c,q)$ is analytic.
Therefore, it would have to equal $\mathscr{P}(\beta,c,q)$ identically, unless there is a phase transition in the sense of a 
point of non-analyticity.
It is interesting to compare this to Corollary \ref{cor:KZ}.
If $c>\min\{c^*(q),c^{\text{loc}}_{\text{RS}}(q)\}$, then there is a phase transition.
For $q=2,3$ the smaller number is $c^{\text{loc}}_{\text{RS}}(q)$.
But for $q\geq 4$, the first transition is at $c^*(q)$.
This means that there is a replica symmetry breaking and/or a discontinuous phase transition.

\begin{proofof}{\bf Proof of Corollary \ref{cor:ANSK}:}
Define the quenched entropy density to be
$$
s_N(\beta,c)\, =\, \E_{N,c}\left[\mathfrak{s}_N(\beta \bJ)\right]\, .
$$
Then we know that it is still nonnegative, since it is the expectation of a pointwise nonnegative random variable.
Also, for each $\delta$,
$$
s_N(\beta,c)\, \leq\, \frac{\beta}{\delta} \left[p_N(\beta-\delta,c) - p_N(\beta,c)\right] - p_N(\beta,c)\, .
$$
This means that
$$
0\, \leq\, \frac{\beta}{\delta}\left[p(\beta-\delta,c) - p(\beta,c)\right] - p(\beta,c)\, .
$$
But if $\beta\leq\beta^*(q,c)$ then $p(\beta,c)=\mathscr{P}(\beta,c)$ and $p(\beta-\delta,c) = \mathscr{P}(\beta-\delta,c)$.
Since $\mathscr{P}(\beta,c)$ is analytic in $\beta$, we make take a derivative. In other words, we make take the limit $\delta\downarrow0$.
Therefore, we find
$$
\beta\leq\beta^*(q,c)\quad \Rightarrow\quad 0\, \leq\, \beta\, \frac{\partial}{\partial \beta} \mathscr{P}(\beta,c) - \mathscr{P}(\beta,c)\, ,
$$
which is the condition from the statement of the corollary.
\end{proofof}

\subsection{Large deviation problem}

We now return to the proof of Theorem \ref{thm:AN}
with 
%
an elementary calculation.
\begin{lemma}
\label{lem:cond}
Given an $R$-replica configuration $\Sigma_R = (\s^{(1)},\dots,\s^{(R)}) \in q^{N\times R}$, let 
$H_N(\Sigma_R,\bJ)$ denote the sum of $H_N(\s^{(r)},\bJ)$ for $r=1,\dots,R$. Then
$$
\E_{N,c}\left[ e^{-\beta H_N(\Sigma_R,\bJ)}\, \big|\, \{|\bJ| = K\}\right]\,
=\, \left(\frac{1}{N^2} \sum_{i,j=1}^{N} e^{-\beta \sum_{r=1}^{R} \delta(\s^{(r)}_i,\s^{(r)}_j)} \right)^K\, ,
$$
for each $K \in \N_0$.
\end{lemma}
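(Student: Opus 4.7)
The plan is to exploit the two defining features of the Poisson disorder: the entries $J_{ij}$ are independent with common parameter $c/(2N)$, and sums of independent Poissons conditioned on their total are multinomial. First I would rewrite the replica Hamiltonian as a sum over edge-labels,
\[
H_N(\Sigma_R,\bJ)\, =\, \sum_{i,j=1}^{N} J_{ij}\, D_{ij}(\Sigma_R)\, ,\qquad
D_{ij}(\Sigma_R)\, \stackrel{\text{def}}{:=}\, \sum_{r=1}^{R} \delta(\s^{(r)}_i,\s^{(r)}_j)\, ,
\]
so that $\exp(-\beta H_N(\Sigma_R,\bJ))$ factorizes as $\prod_{i,j} \exp(-\beta J_{ij} D_{ij}(\Sigma_R))$. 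This is the elementary piece that reduces everything to controlling the joint law of $(J_{ij})$.

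Next I would invoke the standard fact that if $X_1,\dots,X_n$ are independent Poissons with the same parameter, then conditional on $\sum_k X_k = K$ the vector $(X_1,\dots,X_n)$ has the multinomial distribution with $K$ trials and equal cell probabilities $1/n$. Applying this with $n = N^2$ and equal parameters $c/(2N)$, one may replace the collection $(J_{ij})_{i,j}$ by a sample of $K$ i.i.d. uniform indices $((I_k,\mathcal{J}_k))_{k=1}^{K}$ on $\{1,\dots,N\}^2$, setting $J_{ij} = \#\{k : (I_k,\mathcal{J}_k)=(i,j)\}$. This lets me rewrite the exponent as
\[
\sum_{i,j} J_{ij} D_{ij}(\Sigma_R)\, =\, \sum_{k=1}^{K} D_{I_k,\mathcal{J}_k}(\Sigma_R)\, ,
\]
and hence
\[
e^{-\beta H_N(\Sigma_R,\bJ)}\, =\, \prod_{k=1}^{K} e^{-\beta D_{I_k,\mathcal{J}_k}(\Sigma_R)}\, .
\]

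Finally, since the $K$ pairs $(I_k,\mathcal{J}_k)$ are i.i.d., the conditional expectation factorizes into $K$ identical copies of
\[
\E\big[e^{-\beta D_{I_1,\mathcal{J}_1}(\Sigma_R)}\big]\, =\, \frac{1}{N^2}\sum_{i,j=1}^{N} e^{-\beta \sum_{r=1}^{R}\delta(\s^{(r)}_i,\s^{(r)}_j)}\, ,
\]
which is exactly the claimed identity. There is no real obstacle; the only point that needs care is the multinomial-conditional-on-sum reduction, which requires the parameters $c/(2N)$ to be equal across $(i,j)$, and that is built into our definition of $\P_{N,c}$. I would record this step explicitly rather than leaving it implicit, since the same argument will be reused when setting up the second-moment computation in the next subsection.
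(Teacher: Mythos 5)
Your proposal is correct and follows essentially the same route as the paper: the paper's proof likewise combines the fact that independent equal-parameter Poissons conditioned on their sum are multinomial with the multinomial formula, which is exactly your i.i.d.-uniform-pairs factorization in probabilistic dress. No gaps.
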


\begin{proof}
This is merely the concatenation of two elementary and well-known results: for $R$ independent Poisson random variables, $X_1,\dots,X_R$,
with expectations $\lambda_1,\dots,\lambda_R$,
conditioning on the event $\{X_1+\dots+X_R=K\}$ results in the multinomial distribution:
$$
\P(\{X_1=k_1\, ,\ \dots\, ,\ X_r\, =\, k_R\}\, |\, \{X_1+\dots+X_R=k_1+\dots+k_R\})\,
=\, \frac{K!}{\prod_{r=1}^{R} k_r!} \prod_{r=1}^{R} \left(\frac{\lambda_r}{\lambda_1+\dots+\lambda_R}\right)^{k_r}\, ,
$$
and the binomial (or more generally multinomial) formula.
\end{proof}

In order to carry out the second moment calculation, let introduce the
restricted the partition function.
For $N = \mathcal{N} q$, with $\mathcal{N} \in \N$, let 
$$
[q]^{(\mathcal{N},q)}\, =\, \{\s \in [q]^{\mathcal{N}q}\, :\, \rho_{\sigma}(s) = q^{-1} \text{ for each $s \in [q]$}\}\, .
$$
These are the ``balanced'' configurations.
%
The constrained partition function will be defined
$$
\tilde{Z}_{(\mathcal{N},q)}(\bJ)\, =\, \sum_{\s \in [q]^{(\mathcal{N},q)}} e^{-\beta H_N(\s,\bJ)}\, .
$$
Let  $\mathcal{M}([q]^2)$ be the set of all probability measures on $[q]^2$, 
and let $\mathcal{M}_*([q]^2)$ denote the subset of those measures
such that the marginal on both factors in $[q]\times [q]$ are uniform.
Finally, let $\mathcal{M}_*([q]^2,N)$ denote the set of all $\mu \in \mathcal{M}_*([q]^2)$
such that $N\mu(\{(r_1,r_2)\})$ is an integer for each $(r_1,r_2) \in [q]^2$.
Then the following formulas immediately follow from Lemma \ref{lem:cond}:
$$
\E_{N,c}\left[\tilde{Z}_{(\mathcal{N},q)}(\beta \bJ)\, \big|\, \{|\bJ|=K\}\right]\,
=\, \left|[q]^{(\mathcal{N},q)}\right| e^{K \ln\left(1 - \frac{1-e^{-\beta}}{q}\right)}\,
$$
and
$$
\E_{N,c}\left[\tilde{Z}_{(\mathcal{N},q)}(\beta\bJ)^2\, \big|\, \{|\bJ|=K\}\right]\,\\
=\, \sum_{\mu \in \tilde{\Delta}_N([q]^2)} \frac{N!}{\prod_{(r_1,r_2) \in [q]^2} [N \mu(\{(r_1,r_2)\})]!}
e^{K w(\beta,q,\mu)}\, ,
$$
where $w(\beta,q,\mu) = \ln\left(1- 2(1-e^{-\beta})q^{-1} + (1-e^{-\beta})^2 \sum_{(r_1,r_2) \in [q]^2} \mu(\{(r_1,r_2)\})^2\right)$.

\begin{corollary} 
\label{cor:LD}
For any $\kappa>0$, and writing $N=\mathcal{N}q$,
\begin{gather*}
\lim_{\substack{\mathcal{N} \to \infty\\ K/N \to \kappa/2}} \frac{1}{N}\,
\ln \E_{N,c}\left[\tilde{Z}_{(\mathcal{N},q)}(\beta,\bJ)\, \big|\, \{|\bJ|=K\}\right]\,
=\, \mathscr{P}(\beta,\kappa,q)\quad  \text{ and }\\
\lim_{\substack{\mathcal{N} \to \infty\\ K/N \to \kappa/2}} \frac{1}{N}\,
\ln \E_{N,c}\left[\tilde{Z}_{(\mathcal{N},q)}(\beta,\bJ)^2\, \big|\, \{|\bJ|=K\}\right]\,
=\, \max_{\mu \in \mathcal{M}_*([q]^2)} \phi^{(2)}(\beta,\kappa,q,\mu)
\quad \text{ where}\\
\phi^{(2)}(\beta,\kappa,q,\mu)\, =\, s(\mu) + \frac{\kappa}{2}\, \ln\left(1 - \frac{2(1-e^{-\beta})}{q} + (1-e^{-\beta})^2
\sum_{(r_1,r_2) \in [q]^2} \mu(\{(r_1,r_2)\})^2\right)\, .
\end{gather*}
\end{corollary}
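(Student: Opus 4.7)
The plan is to apply the explicit conditional expectation formulas displayed just before the statement (which themselves come from Lemma \ref{lem:cond}) and read off the exponential rate using Stirling's formula. Both conclusions reduce to elementary finite-dimensional Laplace analysis on a compact simplex.

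For the first limit, the given formula is $\E_{N,c}[\tilde Z_{(\mathcal{N},q)}(\beta\bJ)\mid |\bJ|=K]=|[q]^{(\mathcal{N},q)}|\cdot \exp(K\ln(1-(1-e^{-\beta})/q))$. Since a balanced configuration assigns exactly $\mathcal{N}$ vertices to each color, $|[q]^{(\mathcal{N},q)}|=N!/(\mathcal{N}!)^q$ with $N=\mathcal{N}q$, and a single application of Stirling gives $N^{-1}\ln|[q]^{(\mathcal{N},q)}|\to \ln q$. Using $K/N\to\kappa/2$ for the remaining term, the sum reproduces exactly $\mathscr{P}(\beta,\kappa,q)$.

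For the second limit, the formula is a finite sum indexed by $\mu\in\mathcal{M}_*([q]^2,N)$. This index set has cardinality at most $(N+1)^{q^2}$, so the polynomial prefactor contributes only $O(N^{-1}\log N)$ after dividing the logarithm by $N$ and is negligible. Thus the limit of $N^{-1}\ln\E_{N,c}[\tilde Z^2\mid |\bJ|=K]$ equals the limit of $N^{-1}$ times the log of the largest term in the sum. I would apply Stirling to the multinomial coefficient to identify its rate as $s(\mu)=-\sum_{(r_1,r_2)\in[q]^2}\mu(r_1,r_2)\ln\mu(r_1,r_2)$, and use continuity of $\mu\mapsto w(\beta,q,\mu)$ on the compact simplex $\mathcal{M}_*([q]^2)$ together with $K/N\to\kappa/2$ to get the other term $(\kappa/2)w(\beta,q,\mu)$. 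The supremum over the lattice set $\mathcal{M}_*([q]^2,N)$ then converges to the supremum over the continuous simplex $\mathcal{M}_*([q]^2)$; because the simplex is compact and $\phi^{(2)}$ is continuous, the supremum is attained, yielding the displayed formula.

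The only delicate points are the usual ones in a finite-dimensional Laplace argument. First, Stirling's error must be controlled uniformly in $\mu$, including near the boundary where some coordinate $\mu(r_1,r_2)$ is close to zero; this is handled either by the standard bound $\ln n!=n\ln n-n+O(\log(n\vee 1))$ applied coordinate-wise, or by noting that boundary measures have strictly smaller rate and can be discarded. Second, the matching lower bound requires approximating any $\mu\in\mathcal{M}_*([q]^2)$ by a sequence $\mu_N\in\mathcal{M}_*([q]^2,N)$ with $\mu_N\to \mu$, where the nontrivial constraint is that both marginals remain uniform on $[q]$ after rounding on the $q^2$-simplex; a direct row-and-column redistribution of mass of order $1/N$ accomplishes this and changes $\phi^{(2)}$ by $o(1)$. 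I expect this marginal-preserving rounding to be the main (mild) obstacle; everything else is Stirling and compactness.
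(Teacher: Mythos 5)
Your proposal is correct and follows essentially the same route as the paper, which proves the corollary by invoking Stirling's formula applied to the explicit conditional-expectation formulas together with Laplace's method (citing Dembo--Zeitouni); you have simply filled in the standard details, including the marginal-preserving lattice approximation needed for the lower bound.
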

Here we have used the symbol $s(\mu)$ for the entropy
$$
s(\mu)\, =\, - \sum_{(r_1,r_2) \in [q]^2} \mu(\{(r_1,r_2)\}) \ln \mu(\{(r_1,r_2)\})\, .
$$

\begin{proof}
Both follow from Stirling's formula, the previous formulas and Varadhan's Lemma or rather Laplace's method which suffices.
See for example, \cite{DemboZeitouni}.
\end{proof}

With this set-up, we will see that the condition to use the second moment method at parameters $(\beta,c,q)$ is
$$
\max_{\mu \in \mathcal{M}_*([q]^2)} \phi^{(2)}(\beta,q,c,\mu)\, =\, 2 \mathscr{P}(\beta,c,q)\, .
$$
(The variable $\kappa$ is serving as a placeholder for $c$ at present.)
In light of Theorem \ref{thm:Annealed} we also see that the condition is that $\rho_{\Sigma_R}(s) = q^{-R}$ for all choices
of $s=(s_1,\dots,s_R) \in [q]^R$. This merely restates the fact that the empirical measure must collapse on the uniform measure.
If the optimizer of the large deviation principle $\mu \in \mathcal{M}([q]^2)$, one does recover the result above.

We solve this problem by first analyzing the $q=2$ case.
We note that
generally speaking, for all $q$,
\begin{equation}
\label{eq:phiDiff}
\begin{split}
\phi^{(2)}(\beta,c,q,\mu) - 2 \mathscr{P}(\beta,c,q)\, &=\,  - \sum_{(r_1,r_2) \in [q]^2} \mu(\{(r_1,r_2)\}) \ln[q^2 \mu(\{(r_1,r_2)\})]\\
&\qquad
+  \frac{c}{4}\, \ln\Bigg(1 - x^2 \sum_{(r_1,r_2) \in [q]^2} [q^2 \mu(\{(r_1,r_2)\}) - 1]^2\Bigg)\, , 
\end{split}
\end{equation}
where $x=x(\beta,q)=(1-e^{-\beta})/(q-1+e^{-\beta})$, as was defined in Section \ref{subsec:KZ}. 
For $q=2$, one may control the second term on the right hand side by a linearization.

\subsection{Ising case: $q=2$}

For $q=2$ by the conditions on the marginals,
we  may parametrize $\mu \in \mathcal{M}_*([q]^2)$ by a single number 
$$
\theta\, =\, 2\mu(\{(1,1)\})\, =\, 2\mu(\{(2,2)\})\, ,\qquad
1-\theta\, =\, 2\mu(\{(1,2)\})\, =\, 2\mu(\{(2,1)\})\, .
$$
Hence, using (\ref{eq:phiDiff}) and the linearization inequality $\ln(1-x) \leq -x$,
$$
\phi^{(2)}(\beta,c,2,\mu) - 2 \mathscr{P}(\beta,c,2)\, \leq\,  - \theta \ln(2\theta) - (1-\theta) \ln[2(1-\theta)] 
-  cx^2\, (2\theta-1)^2\, .
$$
The right hand side is directly related to the large deviation problem for the mean field Ising/$q=2$ Potts model, i.e., the
Curie-Weiss model.
From this it is easy to see that the stability of the symmetric ansatz is $cx^2\leq 1$.

In other words, when this condition is satisfied the right hand side has no critical point other than $\theta=1/2$.
So the unique maximizer of the right hand side is $\theta=1/2$, at which point the right hand side equals zero.
So the left hand side is always bounded above by $0$.
This means that
$$
\max_{\mu \in \mathcal{M}_*([q]^2)} \phi^{(2)}(\beta,c,2,\mu)\, =\, 2 \mathscr{P}(\beta,c,2)\, ,
$$
because the totally symmetric choice of $\mu$, corresponding here to $\theta=1/2$, does give equality.
As we will see in the next section, this condition guarantees $p(\beta,c,q) = \mathscr{P}(\beta,c,q)$, in this case for $q=2$.
\label{subsec:Ising}

\subsection{The 
optimization principle}


In Theorem 9 of  \cite{AchlioptasNaor}, 
it is proved a general result which implies
that, to find the maximizer of $\phi^{(2)}(\beta,c,q,\mu)$ among all $\mu \in \mathcal{M}_1([q]^2)$, it suffices to consider 
a very restricted subclass.
For $k\in\{0,1,\dots,q\}$ and $t$ a real number satifying $0\leq t\leq q$, define the measure $\mu_{k,t}$ where
$$
\mu_{k,t}(\{(r_1,r_2)\})\,
=\, \begin{cases}
q^{-2}\, , & \text{ if $r_1 \leq k$,}\\
tq^{-2}\, , & \text{ if $r_1 >k$ and $r_2=1$,}\\
\frac{q-t}{q-1}\, \cdot q^{-2}\, , & \text{ if $r_1>k$ and $r_2>1$.}
\end{cases}
$$
It follows the result that one may restrict the optimizer of $\phi^{(2)}$ among all $\mu \in \mathcal{M}_1([q]^2)$
to this subset of measures.
Denote $\Phi^{(2)}(\beta,c,q,k,t) = \phi^{(2)}(\beta,c,q,\mu_{k,t})$. Then direct calculation shows
$$
\Phi^{(2)}(\beta,c,q,k,t) -
2 \mathscr{P}(\beta,c,q)\,
=\,
\frac{c}{2}\, \ln\left(1 + \frac{x^2q(q-k)(t-1)^2}{(q-1)^3}\right)
- \frac{(q-k)\left[t \ln t + (q-t) \ln\left(\frac{q-t}{q-1}\right)\right]}{q^2}\, ,
$$
where $x=x(\beta,q)$ is as defined before in Section \ref{subsec:KZ}.
Note that in this formula $k$ now appears as a parameter. So, since we are just trying to optimize
this quantity, it suffices to take $k$ real in $[0,q]$.
Then, defining $\mathfrak{C}(\beta,q,c)$ and $\mathfrak{K}(\beta,q,k)$ through
$$
\left(q -\mathfrak{K}(\beta,q,k)\right)\, =\, x^2q^2(q-k)\quad \text { and } \quad
\mathfrak{C}(\beta,q,c)\, =\, x^2 q^2 c\, ,
$$
we see that modulo an overall multiplier, the difference is actually equal to the zero temperature quantity with
$c$ rescaled to $\mathfrak{C}$ and the ``real parameter'' $k$ rescaled to $\mathfrak{K}$
$$
x^2q\left(\Phi^{(2)}(\beta,c,q,k,t) -
2 \mathscr{P}(\beta,c,q)\right)\,
=\, \Phi^{(2)}(\infty,\mathfrak{C}(\beta,q,c),q,\mathfrak{K}(\beta,q,k),t) - 2 \mathscr{P}(\infty,q,\mathfrak{C}(\beta,q,c))\, .
$$
We are looking for the optimal choice of real parameters $k,t \in [0,q]$ in the left hand side,
and the positive multiplier $q x^2$ does not affect the arg-max.
Moreover, 
in Theorem 7 of \cite{AchlioptasNaor}, it was studied the $\beta=\infty$ problems, which is the right hand side.
In the notation of our present context, \cite{AchlioptasNaor} showed that as long as $\mathfrak{C} \leq 2 q \ln q$,
the optimal choice of $t$ is $t=1$, which is the symmetric point.
In fact, for $t=1$ the value of $\mathfrak{K}$ becomes irrelevant, because all measures with $t=1$ are the same.
Therefore, using results of  \cite{AchlioptasNaor}, we can finish the proof of our result.

\begin{proofof}{\bf Proof of Theorem \ref{thm:AN}:}
Using the condition $\mathfrak{C}(\beta,q,c) \leq 2 q \ln q$, and then using $\kappa$ as a place-holder for $c$ momentarily,
the results above in conjunction with Corollary \ref{cor:LD} imply that if $\kappa \leq 2 \ln(q) / (qx^2)$, then 
$$
\lim_{\substack{\mathcal{N} \to \infty\\ K/N \to \kappa/2}} \frac{1}{N}\,
\ln \E_{N,c}\left[\tilde{Z}_{(\mathcal{N},q)}(\beta \bJ)^2\, \big|\, \{|\bJ|=K\}\right]\,
=\, 2 
\lim_{\substack{\mathcal{N} \to \infty\\ K/N \to \kappa/2}} \frac{1}{N}\,
\ln \E_{N,c}\left[\tilde{Z}_{(\mathcal{N},q)}(\beta \bJ)\, \big|\, \{|\bJ|=K\}\right]\, ,
$$
and the right hand side equals $2 \mathscr{P}(\beta,c,q)$.
By H\"older's inequality and convexity generally we know that $\E[\ln X] \geq 2 \ln \E[X] - \frac{1}{2} \ln \E[X^2]$,
for any nonnegative random variable $X$.
Therefore,
$$
\lim_{\substack{\mathcal{N} \to \infty\\ K/N \to \kappa/2}} \frac{1}{N}\,
\E_{N,c}\left[\tilde{Z}_{(\mathcal{N},q)}(\beta \bJ)\, \big|\, \{|\bJ|=K\}\right]\,
\geq\, \mathscr{P}(\beta,\kappa,q)\, .
$$
Concentration of measure may be established for $|\bJ|$ in the measure $\P_{N,c}$.
Namely $|\bJ|/N$, the variable we have called $\kappa/2$ up to now, concentrates around $c/2$.
Moreover, one can show that the conditional expectation is Lipschitz as a function of $\kappa$ by general
principles.
Therefore, this establishes the lower bound in the limit as $N \to \infty$ along integer multiples of $N$,
$p(\beta,c,q) \geq \mathscr{P}(\beta,c,q)$.
In other words, since we know the limit of $p_N(\beta,c,q)$ exists as $N \to \infty$, it does not
matter what subsequence one takes to obtain that limit.
This lower bound matches the upper bound so it gives the identity.

For $q=2$ the argument is the same except that 
we use the analysis of Subsection \ref{subsec:Ising}.
\end{proofof}

\appendix
\section{Appendix}

\subsection{Interpolation Results}
\label{sec:ProofsInterpolation}

The proofs in this subsection are all based on 
Lemma \ref{lem:interpo}.
As a first step, we note that by using the series expansion in the radius of convergence of $\ln(1-x)$, 
\begin{equation}
\label{eq:interpo2}
\frac{d}{dt}\,  p_N(\beta,\bc(t))\, 
=\, -\sum_{R=0}^{\infty} \frac{(1-e^{-\beta})^R}{2N^2R}\, \sum_{s \in [q]^R} \sum_{i,j=1}^{N} \frac{dc_{ij}}{dt}\, 
\left\langle\hspace{-5pt}\left\langle 
\prod_{r=1}^{R} \delta(\s_i^{(r)},s_r) \delta(\s_j^{(r)},s_r)
\right\rangle\hspace{-5pt}\right\rangle_{N,\beta,\bc(t)}\, .
\end{equation}

\begin{proofof}{\bf Proof of Corollary \ref{cor:superadd}:}
Let $N=N_1+N_2$.
Let us define $\bc^{(0)}$ and $\bc^{(1)}$ such that $p_N(\beta,\bc^{(0)})$ and $p_N(\beta,\bc^{(1)})$ are the left- and right-hand-sides
of (\ref{eq:pNsuperadd}), respectively:
$$
c_{ij}^{(0)}\, \equiv\, c\, ,\ \text{ for all $i,j$, }\ \text{ and }\quad
c_{ij}^{(1)}\, =\, \frac{N}{N_1}\, \boldsymbol{1}_A(i) \boldsymbol{1}_A(j) + \frac{N}{N_2}\,  \boldsymbol{1}_B(i) \boldsymbol{1}_B(j)\, ,
$$
where denote two sets, $A=\{1,\dots,N_1\}$, $B = \{N_1+1,\dots,N\}$.
We remind the reader of the notation introduced in (\ref{eq:qRreplica}).
Let us extend this as follows
$$
\rho_{\Sigma_R}^A(s)\, =\, \frac{1}{N_1}\, \sum_{i\in A} \prod_{r=1}^{R}\delta(\s^{(r)}_i,s_r)\quad \text{ and }\quad
\rho_{\Sigma_R}^B(s)\, =\, \frac{1}{N_2}\, \sum_{i\in B} \prod_{r=1}^{R}\delta(\s^{(r)}_i,s_r)\, .
$$
Then using  (\ref{eq:interpo2}) for $\bc(t) = (1-t) \bc^{(0)} + t \bc^{(1)}$, 
$$
\frac{d}{dt}\, p_N(\beta,\bc(t))\, 
=\,  - \frac{N_1 N_2}{2N^2}\, \sum_{R = 0}^{\infty} \frac{(1-e^{-\beta})^R}{R}\, \sum_{s \in [q]^R} \left\langle\hspace{-3pt}\left\langle \left[\rho^A_{\Sigma_R}(s)
- \rho^B_{\Sigma_R}(s)\right]^2 \right\rangle\hspace{-3pt}\right\rangle_{N,\beta,\bc(t)}\, .
$$
This has a definite sign, which implies $p_N(\beta,\bc^{(0)}) \geq p_N(\beta,\bc^{(1)})$.
\end{proofof}

\begin{proofof}{\bf Proof of Lemma \ref{lem:superadd}:}
To recapitulate Fekete's argument,  for any $M\leq N$,
$$
x_N\, \geq\, \frac{M\lfloor N/M \rfloor}{N}\, x_M + \frac{N - M \lfloor N/M \rfloor}{N} x_{N - M \lfloor N/M \rfloor}\, 
\geq\, \frac{M\lfloor N/M \rfloor}{N}\, x_M + \frac{1}{N}\, \min_{k\leq M-1} k x_k\, ,
$$ 
which shows that $\liminf_{N \to \infty} x_N\, \geq\,  x_M$ for each $M$. Hence 
$\liminf_{N\to\infty} x_N\, \geq\, \sup_{M \to \infty} x_M$, and the supremum is no less
than the limit superior, $\limsup_{M \to \infty}  x_M$.
Of course the opposite inequality $\limsup_{M \to \infty} x_M \geq \liminf_{N \to \infty} x_N$, holds by definition.
So Fekete's argument is complete. 

Let $x_* = \sup_{N} x_N$.
Defining $\mathcal{N}_{N,K} = NK$,
$$
\frac{\mathcal{N}_{N,K}-M}{\mathcal{N}_{N,K}}\, x_{\mathcal{N}_{N,K}} - \frac{M}{\mathcal{N}_{N,K}}\, x_M\,
\geq\, \frac{1}{K}\, \sum_{k=1}^{K} \left(\frac{\mathcal{M}_{M,N,k} + N}{N}\, x_{\mathcal{M}_{M,N,k} +N} - \frac{\mathcal{M}_{M,N,k}}{N}\, x_{\mathcal{M}_{M,N,k}}\right)\, ,
$$
where we define $\mathcal{M}_{M,N,k} = M + N (k-1)$. The left hand side converges to $x_*$ as $K \to \infty$,
while the right hand side is uniformly bounded below by
$$
\inf_{M' \geq M} \left(\frac{M'+N}{N}\, x_{M'+N} - \frac{M'}{N}\, x_{M'}\right)\, .
$$
So taking the limit $M \to \infty$ of the last expression, and then taking the limit superior as $N \to \infty$,
$$
x_*\, \geq\, \limsup_{N \to \infty} \liminf_{M \to \infty} \left(\frac{M+N}{N}\, x_{M+N} - \frac{M}{N}\, x_M\right)\, .
$$
But for any $M$, $(\frac{M+N}{N}\, x_{M+N} - \frac{M}{N}\, x_M) \geq x_N$, by superadditivity again,
and taking the limit superior of this gives $x_*$. So $x_*$ is not only an upper bound for the right hand side
of the equation displayed above, it is a lower bound, too.
\end{proofof}

\begin{proofof}{\bf Proof of Theorem \ref{thm:Annealed}:}
Using (\ref{eq:interpo2}) and the definition of $\rho_{\Sigma_R}(s)$ from (\ref{eq:qRreplica}), we can rewrite
$$
\frac{d}{dt}\, p_N(\beta,ct)\,
=\, -\frac{c}{2}\, \sum_{R=0}^{\infty} \frac{(1-e^{-\beta})^R}{2N^2R}\, 
\sum_{s \in [q]^R} \left\langle\hspace{-3pt}\left\langle \rho_{\Sigma_R}(s)^2 \right\rangle\hspace{-3pt}\right\rangle_{N,\beta,ct}\, .
$$
Therefore, using the fact that $\sum_{s \in [q]^R} \rho_{\Sigma_R}(s) = 1$ for every $\Sigma_R \in [q]^{N \times R}$, we see that
$$
\frac{d}{dc}\left(\mathscr{P}(\beta,c) - p_N(\beta,c)\right)\, 
=\, \frac{1}{2} \sum_{R=0}^{\infty} \frac{(1-e^{-\beta})^R}{R}\, \sum_{s \in [q]^R} 
\sum_{s \in [q]^R} \left\langle\hspace{-3pt}\left\langle \left[\rho_{\Sigma_R}(s) - q^{-2}\right]^2 \right\rangle\hspace{-3pt}\right\rangle_{N,\beta,c}\, .
$$
Using the fact that $p_N(\beta,0) = \mathscr{P}(\beta,0) = \ln(q)$, and integrating, this proves the result.
\end{proofof}

\begin{proofof}{\bf Proof of Corollary \ref{cor:LipGeneralp}:}
Apply (\ref{eq:interpo2}) to $\bc(t) = (1-t) \bc^{(1)} + t \bc^{(2)}$, defined for $t \in [0,1]$, to obtain the bound
$\left|\frac{d}{dt}\, p_N(\beta,\bc(t))\right|\, \leq\, \frac{\beta}{2N^2}\, \sum_{i,j=1}^{N} \left|c_{ij}^{(2)} - c_{ij}^{(1)}\right|$,
and then integrate. The result for numbers follows by taking $\bc^{(1)}$ and $\bc^{(2)}$ such that $c_{ij}^{(1)}=c_1$ and $c_{ij}^{(2)} = c_2$ for all $i,j$.
\end{proofof}

\subsection{Extended variational principle bounds}
\label{sec:Cavity}

The proofs in this section are more involved than the previous section. In order to prove bounds
and the extended variational principle,
as stated, we first generalize the definition of random spin structure and cavity field functionals.
The definition we give is based on the sampling-resampling definition Kingman eventually gave for his random partition structures \cite{Kingman1,Kingman2}.
Recall from Section \ref{sec:EVP} that $[q]^{\N}$ denotes the set of all infinite spin configurations $\tau = (\tau_1,\tau_2,\dots)$ with each $\tau_n \in [q]$.
Let $[q]^{\N \times \N}$ the set of all infinite sequences of replicas $\mathcal{T} = (\tau^{(1)},\tau^{(2)},\dots)$ with
each $\tau^{(r)} \in [q]^{\N}$.
With the product topology this is compact and metrizable.
Let $\mathscr{M}^*$ denote the set of all Borel probability measures on $[q]^{\N \times \N}$.
We will denote such measures as $\mathfrak{L} \in \mathscr{M}^*$ in order to distinguish them from
measures $\mathcal{L} \in \mathscr{M}$.
Let $\mathscr{S}^*$ denote the set of all measures $\mathfrak{L} \in \mathscr{M}^*$ satisfying the following two exchangeability
conditions:
\begin{itemize}
\item[(i)] For any measurable set $A \subseteq [q]^{\N \times \N}$ and any $\pi \in S_{\infty}$,
$$
\mathfrak{L}(\{\mathcal{T}\, :\, (\tau^{(1)}\circ \pi, \tau^{(2)}\circ \pi,\dots) \in A\})\, =\, \mathfrak{L}(A)\, .
$$
\item[(ii)] With the same setup, $\mathfrak{L}(\{\mathcal{T}\, :\, (\tau^{(\pi(1))},\tau^{(\pi(2))},\dots) \in A\}) = \mathfrak{L}(A)$.
\end{itemize}
The $R$-replica cavity field functions are
\begin{gather*}
G^{(1)}_{N,R}(\beta,c,\mathfrak{L})\,
=\, \int_{[q]^{\N\times \N}}
\widetilde{\E}_{N,c}\left[\frac{1}{N} \ln \sum_{r=1}^{R} \frac{1}{R} \sum_{\s \in [q]^N} \exp\left(-\beta \widetilde{H}_N(\bI,\tau^{(r)},\sigma)\right)\right]\,  d\mathfrak{L}(\mathcal{T})\quad \text{ and}\\[5pt]
G^{(2)}_{N,R}(\beta,c,\mathfrak{L})\,
=\, \int_{[q]^{\N\times \N}} \sum_{K=0}^{\infty} \frac{\pi_{cN/2}(K)}{N}\, \ln\left( \sum_{r=1}^{R} \frac{1}{R} \exp\left(-\beta \sum_{k=1}^{K} \delta(\tau^{(r)}_{2k-1},\tau^{(r)}_{2k})\right)\right)\, 
d\mathfrak{L}(\mathcal{T})\, .
\end{gather*}
\begin{lemma}
\label{lem:continuity}
For any $\beta ,c \geq 0$, $N\in \N$ and $\mathfrak{L} \in \mathscr{S}^*$, the following limits exist and determine continuous
functions on $\mathscr{S}^*$:
$$
\widetilde{G}^{(1)}_{N}(\beta,c,\mathfrak{L})\, \stackrel{{\rm def}}{:=}\, \lim_{R \to \infty} G^{(1)}_{N,R}(\beta,c,\mathfrak{L})\quad \text{ and } \quad 
\widetilde{G}^{(2)}_{N}(\beta,c,\mathfrak{L})\, \stackrel{{\rm def}}{:=}\, \lim_{R \to \infty} G^{(2)}_{N,R}(\beta,c,\mathfrak{L})\, .
$$
\end{lemma}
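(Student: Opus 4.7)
The plan is to establish existence of the two limits via a monotone convergence argument rooted in replica-exchangeability, and then derive continuity from a Hewitt--Savage / de~Finetti representation of $\mathfrak{L}\in\mathscr{S}^*$.

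For existence, I would fix $\bI\in\mathcal{I}_N$ and set $X_r(\mathcal{T},\bI) = \sum_{\sigma\in[q]^N}\exp(-\beta\widetilde H_N(\bI,\tau^{(r)},\sigma))$ and $M_R = R^{-1}\sum_{r=1}^R X_r$. The replica-permutation symmetry (ii) defining $\mathscr{S}^*$ makes $(X_r)_{r\in\mathbb{N}}$ an exchangeable sequence under $\mathfrak{L}$, so for $R_1<R_2$ exchangeability gives
$$\int\ln M_{R_1}\,d\mathfrak{L} \;=\; \binom{R_2}{R_1}^{-1}\sum_{S\subseteq\{1,\dots,R_2\},\,|S|=R_1}\int\ln M_S\,d\mathfrak{L},$$
where $M_S$ denotes the arithmetic mean of the $X_r$'s with $r\in S$. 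Pushing the subset average inside the logarithm by Jensen's inequality for concave $\ln$, and using the identity $\binom{R_2}{R_1}^{-1}\sum_{|S|=R_1}M_S = M_{R_2}$ (each $X_i$ receives weight $1/R_2$), yields $\int\ln M_{R_1}\,d\mathfrak{L}\le \int\ln M_{R_2}\,d\mathfrak{L}$. Coupled with the deterministic bounds $N\ln q-\beta|\bI|\le \ln M_R\le N\ln q$, integrable in $\bI$ under $\widetilde{\mathbb{P}}_{N,c}$ since $|\bI|$ has all Poisson moments, monotone convergence followed by Fubini produces $\widetilde G^{(1)}_N$. The same scheme applied to $Y_r = \exp(-\beta\sum_{k=1}^K\delta(\tau^{(r)}_{2k-1},\tau^{(r)}_{2k}))$, with bounds $-\beta K\le \ln Y_r\le 0$, handles $\widetilde G^{(2)}_N$.

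For continuity on the compact metrizable space $\mathscr{S}^*$, I would apply Hewitt--Savage to symmetry (ii) to decompose $\mathfrak{L}$ uniquely as $\mathfrak{L} = \int_{\mathscr{N}}\nu^{\otimes\mathbb{N}}\,d\mathbb{P}_{\mathfrak{L}}(\nu)$, where $\mathscr{N}$ denotes the compact metrizable space of position-exchangeable Borel probabilities on $[q]^{\mathbb{N}}$ and $\nu^{\otimes\mathbb{N}}$ makes the replicas i.i.d.\ $\nu$. Under each extremal $\nu^{\otimes\mathbb{N}}$ the $X_r$ are genuinely i.i.d., so the strong law together with the uniform bounds above delivers $M_R\to\int X_1\,d\nu^{\otimes\mathbb{N}}$ in $L^1$. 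Integrating through the mixture yields
$$\widetilde G^{(1)}_N(\beta,c,\mathfrak{L}) \;=\; \int_{\mathscr{N}}\widetilde{\mathbb{E}}_{N,c}\!\left[\frac{1}{N}\ln\int X_1(\mathcal{T},\bI)\,d\nu^{\otimes\mathbb{N}}(\mathcal{T})\right]d\mathbb{P}_{\mathfrak{L}}(\nu),$$
and an analogous formula for $\widetilde G^{(2)}_N$. For each fixed $\bI$ the inner integrand is a bounded continuous function of $\nu$, since $X_1(\cdot,\bI)$ involves only the finitely many coordinates $\tau^{(1)}_1,\dots,\tau^{(1)}_{|\bI|}$; so by dominated convergence in $\bI$, continuity of $\widetilde G^{(1)}_N$ and $\widetilde G^{(2)}_N$ reduces to weak continuity of the de~Finetti map $\mathfrak{L}\mapsto \mathbb{P}_{\mathfrak{L}}$.

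That last step is the main obstacle. I would verify it by noting that the test functionals $\nu\mapsto \int h\,d\nu^{\otimes R}$, indexed by $R\in\mathbb{N}$ and bounded continuous $h$ on $([q]^{\mathbb{N}})^R$, form a bounded point-separating algebra of continuous functions on $\mathscr{N}$ containing the constants, hence a dense subspace of $C(\mathscr{N})$ by the Stone--Weierstrass theorem. Since weak convergence $\mathfrak{L}_n\to \mathfrak{L}$ is equivalent to convergence of all finite-dimensional marginals, and each such marginal has the form $\int \nu^{\otimes R}(h)\,d\mathbb{P}_{\mathfrak{L}_n}(\nu)$, this density promotes to weak convergence $\mathbb{P}_{\mathfrak{L}_n}\to \mathbb{P}_{\mathfrak{L}}$ on $\mathscr{N}$. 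A final bounded-convergence argument in the $\bI$-variable then closes the continuity proof.
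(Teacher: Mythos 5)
Your proof is correct, but it reaches the continuity claim by a genuinely different route than the paper. For \emph{existence} both arguments are essentially the same: replica-exchangeability plus Jensen gives monotonicity of $R\mapsto\int\ln M_R\,d\mathfrak{L}$ (the paper uses the leave-one-out comparison of $\gamma_{\{1,\dots,R\}}$ with the average of the $\gamma_{\{1,\dots,R\}\setminus\{r\}}$, you use the subset-averaging identity; these are equivalent). The difference is in how continuity is obtained. The paper pushes the Jensen step to second order: it bounds the increment $\Gamma_R-\Gamma_{R-1}$ above by a quantity of order $(M/m-1)^2/(R-1)^2$ \emph{uniformly over} $\mathfrak{L}\in\mathscr{S}^*$ (after integrating out $\bI$ resp.\ $K$ against the Poisson weights), so the continuous functionals $\Gamma_R$ converge uniformly and the limit is automatically continuous. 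Your monotonicity argument alone yields only pointwise convergence, so you must supply continuity separately, which you do via the Hewitt--Savage decomposition: you identify the limit explicitly as $\widetilde{\E}_{N,c}\bigl[N^{-1}\int\ln\E_\nu[X_1(\cdot,\bI)]\,d\mathbb{P}_{\mathfrak{L}}(\nu)\bigr]$ and then prove weak continuity of $\mathfrak{L}\mapsto\mathbb{P}_{\mathfrak{L}}$ by a Stone--Weierstrass density argument on the compact space of mixing measures. That chain of reasoning is sound (the test functionals $\nu\mapsto\int h\,d\nu^{\otimes R}$ do form a point-separating subalgebra, and weak convergence of the $\mathfrak{L}_n$ does give convergence of the $R$-replica marginals), and it buys you something the paper's proof does not: a closed-form expression for $\widetilde{G}^{(i)}_N$ as a mixture over i.i.d.\ structures, consistent with the ROSt picture. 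The trade-off is that you invoke Hewitt--Savage and compactness machinery where the paper's estimate is elementary and quantitative. One small correction: position-exchangeability of $\mathfrak{L}$ (condition (i) on $\mathscr{S}^*$) only forces the mixing measure $\mathbb{P}_{\mathfrak{L}}$ to be \emph{invariant} under the induced permutation action on $\nu$; it does not force $\mathbb{P}_{\mathfrak{L}}$ to be supported on position-exchangeable $\nu$'s. This is harmless, since your argument never uses exchangeability of the individual $\nu$ --- just take $\mathscr{N}$ to be all Borel probability measures on $[q]^{\N}$.
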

We do not use the continuity in the sequel.
In order to set up a general proof for both, let us define
$$
g^{(1)}_{\bI}(\tau)\, =\, \sum_{\s \in [q]^N} \exp\left(-\beta \widetilde{H}_N(\bI,\tau,\s)\right)\quad
\text{ and} \quad
g^{(2)}_{K}(\tau)\, =\, \exp\left(-\beta \sum_{k=1}^{K} \delta(\tau_{2k-1},\tau_{2k})\right)\, .
$$
Note that $q^N e^{-\beta|\bI|} \leq g^{(1)}_{\bI} \leq q^N$ and $e^{-\beta K} \leq g^{(2)}_K \leq 1$.

\begin{proof}
For simplicity, suppose that $g : [q]^{\N} \to (0,\infty)$ is a continuous, positive function, with bounds $0<m\leq g\leq M<\infty$.
Given a finite set $A \subset \N$, let $g_A(\mathcal{T}) = \sum_{r \in A} |A|^{-1} g(\tau^{(r)})$.
Define $\gamma_A(\mathcal{T}) = \ln g_A(\mathcal{T})$.
This is continuous on $[q]^{\N \times \N}$ for each $R \in \N$.
Define
$\Gamma_R(\mathfrak{L}) = \int_{[q]^{\N\times \N}} \gamma_{\{1,\dots,R\}}(\mathcal{T})\, d\mathfrak{L}(\mathcal{T})$,
which is continuous on $\mathcal{M}^*$. We want to show that, when restricted to $\mathscr{S}^*$ these functions
converge pointwise to a continuous limit. From this we will be able to prove the desired result.

Let $\mathcal{A}(R) = \{1,\dots,R\}$ and let $\mathcal{A}(R,r) = \mathcal{A}(R) \setminus \{r\}$, for $r=1,\dots,R$.
Then, leaving out the explicit dependence on the argument 
$$
\gamma_{\mathcal{A}(R)} - \frac{1}{R}\, \sum_{r \in \mathcal{A}(R)} \gamma_{\mathcal{A}(R,r)}\,
=\, \frac{1}{R}\, \int_0^1 \sum_{r=1}^R \frac{g_{\mathcal{A}(R)} - g_{\mathcal{A}(R,r)}}
{(1-\theta) g_{\mathcal{A}(R)} - \theta g_{\mathcal{A}(R,r)}}\, d\theta\, .
$$
Note that the left hand side is nonnegative by Jensen's inequality.
Direct inspection shows the integrand in the right hand side is zero when $\theta=0$.
Therefore, using integrating by parts, the right hand side equals
$$
\frac{1}{R}\, \sum_{r=1}^{R} \int_0^1 \left( \int_0^{\theta_1} \left(\frac{g_{\mathcal{A}(R)} - g_{\mathcal{A}(R,r)}}
{(1-\theta) g_{\mathcal{A}(R)} - \theta g_{\mathcal{A}(R,r)}}\right)^2\, d\theta\,\right)\, d\theta_1\, 
\leq\, \frac{1}{2m^2R}\,  \sum_{r=1}^{R} \left(g_{\mathcal{A}(R)} - g_{\mathcal{A}(R,r)}\right)^2\, .
$$
But a simple calculation shows that
$$
\frac{1}{R}\,  \sum_{r=1}^{R} \left(g_{\mathcal{A}(R)} - g_{\mathcal{A}(R,r)}\right)^2\, 
=\, \frac{1}{(R-1)^2} \left[\frac{1}{R}\, \sum_{r=1}^{R} \left(g(\tau^{(r)}) - g_{\mathscr{A}(R)}(\mathcal{T})\right)^2\right]\,
\leq\, \frac{(M-m)^2}{(R-1)^2}\, .
$$
The key point is that this is summable, summing over $R \in \{2,3,\dots\}$. Also, by exchangeability, taking expectations shows that this gives
$$
0\, \leq\, \Gamma_R(\mathfrak{L}) - \Gamma_{R-1}(\mathfrak{L})\, \leq\, \frac{(\frac{M}{m}-1)^2}{2(R-1)^2}\, .
$$
Since the uniform limit of continuous functions is continuous this shows that $\Gamma = \lim_{R \to \infty} \Gamma_R$ is continuous.

Now for the lemma as stated there is the technicality that the functions depend on parameters, $|\bI|$ and $K$, and that the ratio $M/m$ diverges
as these parameters do. On the other hand, the quantity on the right hand side above is integrable against the measures for these random
parameters. More specifically, one has integrability of $(e^{\beta|\bI|}-1)^2$ and $(e^{\beta K}-1)^2$ against the appropriate Poisson measures.
So the dominated convergence theorem shows that the result still holds.
\end{proof}

Given $\mathcal{L} \in \mathscr{S}$, one may define $\mathfrak{L}_{\mathcal{L}} \in \mathscr{S}^*$ as follows.
Suppose that $(\bxi,\mathcal{T})$ is distributed according to $\mathcal{L}$.
Let $\mathfrak{r}_1,\mathfrak{r}_2,\dots$ be i.i.d., $\N$-valued random variable distributed according to $\bxi$.
Then we let $\mathfrak{L}_{\mathcal{L}}$ be the marginal distribution of $(\tau^{(\mathfrak{r}_1)},\tau^{(\mathfrak{r}_2)},\dots)$.

\begin{corollary}
\label{cor:Identity}
For any $\mathcal{L} \in \mathscr{S}$, $\widetilde{G}_N^{(i)}(\beta,c,\mathfrak{L}_{\mathcal{L}}) = G_N^{(i)}(\beta,c,\mathcal{L})$
for $i=1,2$.
\end{corollary}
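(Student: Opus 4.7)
The plan is to unfold the sampling construction of $\mathfrak{L}_{\mathcal{L}}$ and recognize $G_{N,R}^{(i)}(\beta,c,\mathfrak{L}_{\mathcal{L}})$ as the expectation of the logarithm of an empirical average of $R$ i.i.d.\ samples drawn from $\bxi$. By the defining rule for $\mathfrak{L}_{\mathcal{L}}$, if $(\bxi,\mathcal{T})$ is distributed according to $\mathcal{L}$ and $\mathfrak{r}_1,\mathfrak{r}_2,\dots$ are, conditionally on $\bxi$, i.i.d.\ with $\mathbb{P}(\mathfrak{r}_j=\alpha\mid\bxi)=\xi_\alpha$, then pushing forward under the map $\mathcal{T}\mapsto(\tau^{(\mathfrak{r}_1)},\tau^{(\mathfrak{r}_2)},\dots)$ yields
\[
G_{N,R}^{(1)}(\beta,c,\mathfrak{L}_{\mathcal{L}})
\,=\,\int\widetilde{\E}_{N,c}\,\E\!\left[\tfrac{1}{N}\,\ln\tfrac{1}{R}\sum_{r=1}^{R}g_{\bI}^{(1)}\!\left(\tau^{(\mathfrak{r}_r)}\right)\right]d\mathcal{L}(\bxi,\mathcal{T}),
\]
with the inner $\E$ taken over $\mathfrak{r}_1,\dots,\mathfrak{r}_R$, and the analogous expression (with $g_K^{(2)}$ in place of $g_{\bI}^{(1)}$ and the Poisson weight $\pi_{cN/2}(K)$ replacing $\widetilde{\E}_{N,c}$) holds for $i=2$.

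Next I would apply the strong law of large numbers conditionally on $(\bxi,\mathcal{T},\bI)$. Since $\sum_\alpha\xi_\alpha=1$ almost surely by condition (i) of Definition~\ref{def:Kingman}, the values $g_{\bI}^{(1)}(\tau^{(\mathfrak{r}_r)})$ are conditionally i.i.d.\ with mean $\sum_\alpha\xi_\alpha\,g_{\bI}^{(1)}(\tau^{(\alpha)})>0$, so
\[
\tfrac{1}{R}\sum_{r=1}^{R}g_{\bI}^{(1)}\!\left(\tau^{(\mathfrak{r}_r)}\right)\;\longrightarrow\;\sum_{\alpha=1}^{\infty}\xi_\alpha\,g_{\bI}^{(1)}\!\left(\tau^{(\alpha)}\right)
\quad\text{a.s.\ as }R\to\infty.
\]
Continuity of $\ln$ on $(0,\infty)$ then gives convergence of the $\ln$ of the empirical average. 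To pass the limit through the three layers of expectation (over $\mathfrak{r}_r$, under $\widetilde{\E}_{N,c}$, and against $d\mathcal{L}$), I would invoke dominated convergence: the uniform bounds $q^N e^{-\beta|\bI|}\le g_{\bI}^{(1)}\le q^N$ sandwich $\tfrac{1}{N}\ln$ of the empirical average between $\ln q-\beta|\bI|/N$ and $\ln q$, and $|\bI|$ has mean $cN$ under $\widetilde{\P}_{N,c}$, so these dominants are integrable. Carrying out the same procedure for $i=2$ with the bounds $-\beta K/N\le\tfrac{1}{N}\ln(\cdot)\le 0$ and $K$ Poisson of mean $cN/2$, one obtains the identification with $G_N^{(i)}(\beta,c,\mathcal{L})$ as defined in (\ref{eq:GN1})--(\ref{eq:GN2}).

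The only real obstacle is that the natural two-sided bounds on the logarithms degrade linearly in the Poisson parameters $|\bI|$ and $K$, so one cannot use a single absolute dominant and must check integrability jointly against the Poisson weights $\widetilde{\P}_{N,c}$ and $\pi_{cN/2}$. This is precisely the integrability issue that appears in the last paragraph of the proof of Lemma~\ref{lem:continuity}, and it is harmless since Poisson distributions have exponential moments of every order. Apart from this bookkeeping, the corollary is essentially immediate from the law of large numbers and continuity of $\ln$.
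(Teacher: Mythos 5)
Your proposal is correct and follows essentially the same route as the paper's proof: a law of large numbers for the samples $\mathfrak{r}_1,\mathfrak{r}_2,\dots$ conditionally on fixed $\bI$ and $K$, continuity of the logarithm, and then dominated convergence using the bounds $q^N e^{-\beta|\bI|}\le g^{(1)}_{\bI}\le q^N$ and $e^{-\beta K}\le g^{(2)}_K\le 1$ together with integrability of $|\bI|$ and $K$ under the Poisson weights. The only (immaterial) difference is that you invoke the strong law where the paper cites the weak law.
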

\begin{proof}
If one fixes $\bI$ and $K$, then this follows from the weak law of large numbers for $\mathfrak{r}_1,\mathfrak{r}_2,\dots$, and continuity of the functions involved.
This establishes convergence, pointwise for each finite $|\bI|$ and $K$.
For random $\bI$ and $K$ one can use the dominated convergence theorem, since the functions satisfy
exponential bounds with respect to $|\bI|$ and $K$,
and these random variables are Poissonian.
\end{proof}
\begin{proofof}{\bf Proof of Theorem \ref{thm:RSBbd}:}
In view of Corollary \ref{cor:Identity}, it suffices to prove 
$$
p_N(\beta,c) + {G}_{N,R}^{(2)}(\beta,c,\mathfrak{L})\, \leq\, {G}_{N,R}^{(1)}(\beta,c,\mathfrak{L})\, ,
$$
for each $\mathfrak{L} \in \mathscr{S}^*$ and each $R \in \N$.

In order to prove this we make yet another definition. For $M \in \N$ finite, let us define $\widetilde{\bI}$ to be a sequence
$((I_k^1,I_k^2))_{k=1}^{|\bI|}$ where $\bI$ is still $\pi_{cN}$ distributed, and conditional on that $(I_1^1,\dots,I_{|\bI|}^1)$
and $(I_1^2,\dots,I_{|\bI|}^2)$ are indepedent, with the first being uniform on $\{1,\dots,N\}$ and the second being uniform on
$\{1,\dots,M\}$. Then we define $H^{(1)}_N(\widetilde{\bI},\s,\t) = \sum_{k=1}^{|\widetilde{\bI}|} \delta(\s_{I_k^1},\tau_{I_k^2})$.
We let $\widetilde{G}_{N,R,M}^{(1)}(\beta,c,\mathfrak{L})$ be the result of changing $G_{N,R,M}^{(1)}(\beta,c,\mathfrak{L})$
by replacing $H_N(\bI,\s,\t^{(r)})$ by $\widetilde{H}_N^{(1)}(\bI,\s,\t^{(r)})$.
We notice that if $\widetilde{\bI}$ happens to be such that no two numbers in $(I_1^2,\dots,I_{|\bI|}^2)$ are the same,
then the two definitions are equal in distribution because of exchangeability of $\mathfrak{L}$.
But conditional on $|\bI|$, this happens with probability $(1-M^{-1})^{|\widetilde{\bI}|}$ which implies
$$
\lim_{M \to \infty} \widetilde{G}_{N,R,M}^{(1)}(\beta,c,\mathfrak{L})\, =\, G_{N,R}^{(1)}(\beta,c,\mathfrak{L})\, ,
$$
along with
integrability and the dominated  convergence theorem.

Similarly, let $\widetilde{\bK} = ((K_i^1,K_i^2))_{i=1}^{|\widetilde{\bK}|}$ be a sequence where $|\widetilde{K}|$ is $\pi_{cN/2}$ distributed
and conditional on that $(K_1^1,\dots,K_{|\widetilde{\bK}|}^1)$ and  $(K_1^2,\dots,K_{|\widetilde{\bK}|}^2)$ 
are all independent and uniform on $\{1,\dots,M\}$.
We let $\widetilde{H}_N^{(2)}(\widetilde{\bK},\t) = \sum_{i=1}^{|\widetilde{\bK}|} \delta(\t_{K_i^1},\t_{K_i^2})$,
and use this to replace $\sum_{i=1}^{K} \delta(\t_{2i-1},\t_{2i})$ in the definition of $G^{(2)}_{N,R}(\beta,c,\mathfrak{L})$.
We call the new version $\widetilde{G}^{(2)}_{N,R,M}(\beta,c,\mathfrak{L})$.
Then if none of the $K_i^1$'s and $K_i^2$'s are repeated, there is no real difference from before, and this happens with
probability $(1-M^{-1})^{2|\widetilde{\bK}|}$. This shows
$$
\lim_{M \to \infty} \widetilde{G}_{N,R,M}^{(2)}(\beta,c,\mathfrak{L})\, =\, G_{N,R}^{(2)}(\beta,c,\mathfrak{L})\, ,
$$

Finally, we claim that 
\begin{equation}
\label{eq:CSdesid}
p_N(\beta,c) + \widetilde{G}_{N,R,M}^{(2)}(\beta,c,\mathfrak{L})\, \leq\, \widetilde{G}_{N,R,M}^{(1)}(\beta,c,\mathfrak{L})\, ,
\end{equation}
by arguments from Section \ref{sec:interpolation}.
Indeed, considering $(\s_1,\dots,\s_N,\t_1,\dots,\t_M)$ as a spin configuration, we see that the left and right are given respectively by
$\frac{M+N}{N} p_{M+N}(\beta,\bc^{(i)}) - \frac{M}{N} \ln[q]$, for $i=1,2$, where
$$
\bc^{(1)} = \frac{M+N}{N}\, \begin{bmatrix} c \mathbbm{1}_{N,N} & 0 \\ 0 & c \mathbbm{1}_{M,M} \end{bmatrix}\quad
\text{ and }\quad
\bc^{(2)} = \frac{M+N}{N}\, \begin{bmatrix} 0 & (c/2) \mathbbm{1}_{N,M} \\ (c/2) \mathbbm{1}_{M,N} & 0 \end{bmatrix}\, ,
$$
where $\mathbbm{1}_{m,n}$ is the $m\times n$ matrix with all entries equal to 1.
In particular the difference is a positive multiple of the outer product of the vector $(1,\dots,1,-1,\dots,-1)$ with the first $N$ entries equal
to $1$ and the last $M$ equal to $-1$. So following the proof of Corollary \ref{cor:superadd} one can deduce (\ref{eq:CSdesid}).
\end{proofof}

\begin{proofof}{\bf Proof of Lemma \ref{lem:EstimateEVP}:}
We will prove the formula for $G_N^{(2)}(\beta,c,\mathcal{L}_{M,N})$, first.
Let $\widetilde{I}_{M,1},\widetilde{I}_{M,2},\dots$ be chosen independently, and uniformly from $\{1,\dots,M\}$.
Using $K$ and $\widetilde{I}_{M,1},\widetilde{I}_{M,2},\dots$, we define
the random coupling matrix $\widehat{\bJ}^{(M,N)} \in \M_M(\N_0)$ such that
$$
\widehat{J}^{(M,N)}_{i,j}\, =\, \#\{k\leq K\, :\, \widetilde{I}_{M,2k-1}=i\, ,\ \widetilde{I}_{M,2k}=j\}\, .
$$
Then these variables are distributed as independent Poisson random variables all with means $cN/(2M^2)$.
Moreover, since $K$ and $\widetilde{I}_{M,1},\widetilde{I}_{M,2},\dots$ are independent of $\widetilde{\bJ}^{(M,N)}$,
this means that $\widetilde{\bJ}^{(M,N)}$ and $\widehat{\bJ}^{(M,N)}$ are independent.
Since $\widetilde{\bJ}^{(M,N)}$ has the distribution $\P_{N,\tilde{c}_{M,N}}$, the sum $\widetilde{\bJ}^{(M,N)} + \widehat{\bJ}^{(M,N)}$
has the distribution $\P_{N,\tilde{c}_{M,N} + (cN/M)}$.
This justifies the desired equation. More precisely,
$$
G^{(2)}_{N,c}\left(\beta,\mathcal{L}_{M,N}\right)\,
=\, \frac{1}{N}\, \E\left[\ln \sum_{\alpha=1}^{q^M} \omega_{\beta,M,\widetilde{\bJ}^{(M,N)}}\left({\sigma}(\alpha)\right) 
\exp\left(-\beta\sum_{k=1}^{K} \delta\left({\sigma}_{\widetilde{I}_{M,2k-1}}(\alpha),\widetilde{\sigma}_{{I}_{M,2k}}(\alpha)\right)\right)\right]\, ,
$$
where the expectation is over $\widetilde{\bJ}^{(M,N)}$, $K$ and $\widetilde{I}_{M,1},\widetilde{I}_{M,2},\dots$.
But using $\widehat{\bJ}^{(M,N)}$ this may be rewritten as
$$
G^{(2)}_{N,c}\left(\beta,\mathcal{L}_{M,N}\right)\,
=\, \frac{1}{N}\, \E\left[\ln \sum_{\alpha=1}^{q^M} \omega_{\beta,M,\widetilde{\bJ}^{(M,N)}}\left(\widetilde{\sigma}^{(\alpha)}\right) 
\exp\left(-\beta\sum_{i,j=1}^{M} \widehat{J}^{(M,N)}_{ij} \delta\left(\widetilde{\sigma}_{i}^{(\alpha)},\widetilde{\sigma}_{j}^{(\alpha)}\right)\right)\right]\, .
$$
Keeping track of the definition of ${\sigma}^{(\alpha)}$ and also of $H_{M}(\cdot,\widehat{\bJ}^{(M,N)})$, we have
$$
G^{(2)}_{N,c}\left(\beta,\mathcal{L}_{M,N}\right)\,
=\, \frac{1}{N}\, \E\left[\ln \sum_{{\sigma} \in[q]^M} \omega_{\beta,M,\widetilde{\bJ}^{(M,N)}}\left({\sigma}\right) 
\exp\left(-\beta H_{M}\left(\widetilde{\sigma},\widehat{\bJ}^{(M,N)}\right)\right)\right]\, .
$$
Finally, using the definition of the Boltzmann-Gibbs measure, this can be rewritten as 
\begin{align*}
G^{(2)}_{N,c}\left(\beta,\mathcal{L}_{M,N}\right)\,
&=\, \frac{1}{N}\, \E\left[\ln \sum_{{\sigma} \in[q]^M} \frac{1}{Z_N\left(\beta,\widetilde{\bJ}^{(M,N)}\right)}\, 
\exp\left(-\beta H_{M}\left({\sigma},\widetilde{\bJ}^{(M,N)}+\widehat{\bJ}^{(M,N)}\right)\right)\right]\\
&=\, \frac{1}{N}\, \E\left[\ln Z_N\left(\beta,\widetilde{\bJ}^{(M,N)}+\widehat{\bJ}^{(M,N)}\right) 
- \ln Z_N\left(\beta,\widetilde{\bJ}^{(M,N)}\right)\right]\, .
\end{align*}
Keeping track of the marginal distributions of $\widetilde{\bJ}^{(M,N)}$
and  $\widetilde{\bJ}^{(M,N)}+\widehat{\bJ}^{(M,N)}$ in the measure for $\E$ does give equation for  $G_N^{(2)}(\beta,c,\mathcal{L}_{M,N})$.

The derivation of the equation for  $G_N^{(1)}(\beta,c,\mathcal{L}_{M,N})$ is similar. We leave this as an exercise for the reader.
\end{proofof}

\subsection{Poisson-Dirichlet structures}
\label{sec:PD}

\begin{proofof}{\bf Proof of Lemma \ref{lem:Laplace}:}
Using equation (\ref{eq:MGFal}) we immediately have
\begin{align*}
\E\left[\exp\left(-\lambda \int_0^{\infty} x^p\, d\Xi(x)\right)\right]\,
&=\, \exp\left(-\int_0^{\infty} \left(1  - e^{-\lambda x^p}\right)\, \frac{d}{dx}(-x^{-m})\, dx\right)\\
&=\, \exp\left(-\int_0^{\infty} x^{-m}\, \frac{d}{dx}\left(- e^{-\lambda x^p}\right)\, dx\right)\, ,
\end{align*}
using integration by parts and the fact that $(1-e^{-\lambda x^p})$ converges to zero as $x \to 0$
faster than $x^{-m}$ diverges, as long as $p>m$ for the boundary term at $0$. (The boundary term at $\infty$
follows just because $x^{-m}$ converges to zero there.)
Rewriting $y = \lambda x^p$, so that $x^{-m} = \lambda^{m/p} y^{-m/p}$ gives the result.
\end{proofof}

\begin{proofof}{\bf Proof of Theorem \ref{thm:stability}:}
This follows from the conjunction of two basic facts about general Poisson processes, both of which follow easily from the moment generating functional identity definition of Poisson processes:
\begin{itemize}
\item If $\{\xi_1,\xi_2,\dots\}$ is a Poisson process with intensity $\Lambda(d\xi)$ on $(0,\infty)$,
and $(\lambda_1,\lambda_2,\dots)$ are i.i.d., $\rho$-distributed points in $(0,\infty)$ independent of $\{\xi_1,\xi_2,\dots\}$ then the 
pairs $\{(\xi_1,\lambda_1),(\xi_2,\lambda_2),\dots\}$ are a Poisson process on the quadrant $(0,\infty)^2$ with intensity measure 
$\tilde{\Lambda}(d\xi \times d\lambda) = \Lambda(d\xi) d\rho(\lambda)$.
\item If $\{(\xi_1,\lambda_1),(\xi_2,\lambda_2),\dots\}$ is a Poisson process on $(0,\infty)^2$ with intensity measure $\tilde{\Lambda}(d\xi \times d\lambda)$,
and if $\Phi : (0,\infty)^2 \to (0,\infty)^2$ is a diffeomorphism, then $\{\Phi(\xi_1,\lambda_1),\Phi(\xi_2,\lambda_2),\dots\}$ is a Poisson process
with intensity measure $\tilde{\Lambda}_\Phi$ defined as $\tilde{\Lambda}_{\Phi}(A) = \int \boldsymbol{1}_A(\Phi(\xi,\lambda))\, \tilde{\Lambda}(d\xi\times d\lambda)$.
\end{itemize}
Taking the function $\Phi(\xi,\lambda) = (\lambda \xi,\lambda)$ we see that, for the case $\Lambda = \Lambda_m$,
$$
\tilde{\Lambda}_{\Phi}(A)\, =\, \int_0^{\infty} \left(\int_0^{\infty} \boldsymbol{1}_A(\lambda \xi,\lambda) \frac{d}{d\xi}\left(-\xi^{-m}\right)\, d\xi\right)\, d\rho(\lambda)\,
=\, \int_0^{\infty} \lambda^m \left(\int_0^{\infty} \boldsymbol{1}_A(x,\lambda)\, \frac{d}{dx}\left(-x^{-m}\right)\, dx\right)\, d\rho(\lambda)\, ,
$$
using equation (\ref{eq:MGFal}). So $\tilde{\Lambda}_{\Phi}(d\xi\times d\lambda) = \lambda^{m} \Lambda_m(d\xi) d\rho(\lambda)$.
Note that the measure $c^{-m} \lambda^{m} d\rho(\lambda)$ is a probability measure. So taking the marginal of $\tilde{\Lambda}_{\Phi}$ just on the first coordinate gives
$c^{m} \Lambda_m(d\xi) = \Lambda_m(d(c^{-1}\xi))$, which equals the change of measure of $\Lambda_m$ due to the mapping $\xi \mapsto c\xi$.
\end{proofof}

\begin{proofof}{\bf Proof of Corollary \ref{cor:Z}:}
This is proved by induction, conditioning on the $\sigma$-algebra of $\widehat{\xi}_{\bm^{(L)}_{\restriction L-1}}(\ba^{(L-1)})$.
More precisely, first construct the un-normalized Poisson process associated to this normalized random partition structure,
and then use Theorem \ref{thm:stability}.
Note that to work it is essential that $m_1<m_2<\dots<m_L$ since, in the proof of Lemma \ref{lem:Laplace} above,
one does need this condition in order to have finite fractional moments.
\end{proofof}

\subsection{Replica Symmetry Breaking results}
\label{app:Calculations}

In this section we use an abbreviated notation in order to reduce the number of symbols needed.
We hope that the reader may follow the calculation, inferring the translation needed from the context.

To calculate $G_N^{(2)}$, the easier of the two parts of the cavity field functional,
we condition on $\{\xi^{1}_{\alpha_1}\}$ and on $\{\tau^1_{\alpha_1}\}$.
Since the $\tau^2_{\alpha}$'s and $X_{\alpha}$'s are i.i.d., we do not condition on them.
Then we note that
$$
{\bf P}(\tau_{\alpha,2k-1} = \tau_{\alpha,2k}\, |\, \{\tau^1_{\alpha_1,k}\})\,
=\, {\bf P}(X_1=X_2=1) {\bf 1}\{\tau^1_{\alpha_1,2k-1} = \tau^1_{\alpha_1,2k}\}
+ [1 - {\bf P}(X_1=X_2=1)] \cdot \frac{1}{q}\, .
$$
This implies that
$$
{\E}\left[e^{-m_2 \beta \delta(\tau_{\alpha,2k-1},\tau_{\alpha,2k})}\, \big|\, \{\tau^1_{\alpha_1}\}\right]\,
=\, p^2 e^{-m_2\beta \delta(\tau^1_{\alpha_1,2k-1},\tau^1_{\alpha_1,2k})} + (1-p^2) \left(1 - \frac{1-e^{-m_2\beta}}{q}\right)\, .
$$
For
$$
\lambda_{\alpha}\, =\, e^{-\beta \delta(\tau_{\alpha,2k-1},\tau_{\alpha,2k})}\, ,
$$
we have that
$$
\{\lambda_{\alpha} \xi_{\alpha}\}\, \stackrel{d}{=}\,
\{\lambda_0 \xi_{\alpha}\}\, ,
$$
where
$$
\lambda_0\, =\, {\E}\left[ {\E}\left[e^{-m_2 \beta \delta(\tau_{\alpha,2k-1},\tau_{\alpha,2k})}\, \big|\,  \{\tau^1_{\alpha_1}\}\right]^{m_1/m_2}\right]^{1/m_1}\, .
$$
So we get in general
$$
\ln \lambda_0\,
=\, \frac{1}{m_1}\, \ln \left[
\left(1-\frac{1}{q}\right)  \left(1 - \frac{(1-p^2)(1-e^{-m_2\beta})}{q}\right)^{m_1/m_2}
+ \frac{1}{q} \left(1 - \left[p^2 + \frac{1-p^2}{q}\right]\left(1-e^{-m_2\beta}\right)\right)^{m_1/m_2}\right]
$$
Because the spin fields are independent for different values of $k$, the effect of the $L$ is just to multiply this final answer.
Therefore, taking the expectation of that, and dividing by $N$ gives
$$
G_N^{(2)}\,
=\, \frac{c}{m_1}\, \ln \left[
\left(1-\frac{1}{q}\right)  \left(1 - \frac{(1-p^2)(1-e^{-m_2\beta})}{q}\right)^{m_1/m_2}
+ \frac{1}{q} \left(1 - \left[p^2 + \frac{1-p^2}{q}\right]\left(1-e^{-m_2\beta}\right)\right)^{m_1/m_2}\right]
$$

To get the replica symmetric ansatz, we use the 2-level RPC and take the limits $m_2 \uparrow 1$ and $m_1 \downarrow 0$.
Taking $m_2 \uparrow 1$,  gives
$$
\ln \lambda_0\,
=\, \frac{1}{m_1}\, \ln
 \left[
\left(1-\frac{1}{q}\right) \left(1 - \frac{(1-p^2)(1-e^{-\beta})}{q}\right)^{m_1}
+ \frac{1}{q} \left(1 - \left[p^2 + \frac{1-p^2}{q}\right]\left(1-e^{-\beta}\right)\right)^{m_1}\right]\, .
$$
Then taking $m_1 \downarrow 0$ gives
$$
\ln \lambda_0\,
=\,  \left(1-\frac{1}{q}\right)\, \ln \left(1 - \frac{(1-p^2)(1-e^{-\beta})}{q}\right)
+ \frac{1}{q}\, \ln \left(1 - \left[p^2 + \frac{1-p^2}{q}\right]\left(1-e^{-\beta}\right)\right)\, .
$$
Thus the replica symmetric value of the first term is
\be
\label{eq:G2RS}
G_N^{(2)}\,
=\,  \frac{c}{2}\, \left(1-\frac{1}{q}\right) \ln \left(1 - \frac{(1-p^2)(1-e^{-\beta})}{q}\right)
+ \frac{c}{2q}\, \ln \left(1 - \left[p^2 + \frac{1-p^2}{q}\right]\left(1-e^{-\beta}\right)\right)\, .
\ee

The more complicated term is $G_N^{(1)}$.
Conditioning on the spins at the first level $\{\tau_{\alpha}^1\}$, and all the $K(i)$ values, we get
(using a notation which is clear from the context)
$$
{\E}[\lambda_{\alpha}^{m_2}\, |\, \{K(i)\}_{i=1}^{N},\{\tau^1_{\alpha,i,k}\}_{\alpha,i,k}]\,
=\, \prod_{i=1}^N {\E}^{\{X_{i,k}\},\{\tau_{i,k}^2\}}
\left[\left(\sum_{\sigma_i=1}^{q} \prod_{k=1}^{K(i)} \left[e^{-\beta \delta(\s_i,\tau_{i,k})}\right]\right)^{m_2}\right]\, ,
$$
where the $\tau_{i,k}^2$ are all i.i.d., uniform on $\{1,\dots,q\}$, and
$$
\tau_{i,k} = X_{i,k} \tau_{i,k}^1 + (1-X_{i,k}) \tau_{i,k}^2\, .
$$
The $X_{i,k}$'s are i.i.d., Bernoulli-$p$ random variables.
Since the formulas are identically distributed for different $i$'s and since there are $N$ such $i$'s (canceling the division by $N$),
we get the formula
$$
G_N^{(1)}\,
=\, \frac{1}{m_1} {\E}^{\kappa} \ln
{\E}^{\{\tau_{k}^1\}}\left[
 {\E}^{\{X_{k}\},\{\tau_{k}^2\}}
\left[\left(\sum_{\sigma=1}^{q} \prod_{k=1}^{\kappa} \left[e^{-\beta \delta(\s,\tau_{k})}\right]\right)^{m_2}\right]^{m_1/m_2}
\right]\, ,
$$
where once again $\kappa$ is a Poisson random variable with mean $c$, and now $\{\tau^1_k\}$ and $\{\tau^2_k\}$ are all i.i.d.,
uniform random variables on $\{1,\dots,q\}$, and $\{X_k\}$ are all i.i.d., Bernoulli random variables with mean $p$,
and $\tau_k = X_k \tau^1_k + (1-X_k) \tau^2_k$ for each $k$.

We now take $m_2 \uparrow 1$ and $m_1 \downarrow 0$.
Taking $m_2\uparrow 1$ gives
$$
 \frac{1}{m_1} {\E}^{\kappa} \ln
{\E}^{\{\tau_{k}^1\}}\left[
 {\E}^{\{X_{k}\},\{\tau_{k}^2\}}
\left[\sum_{\sigma=1}^{q} \prod_{k=1}^{\kappa} \left[e^{-\beta \delta(\s,\tau_{k})}\right]\right]^{m_1}
\right]\, ,
$$
which can be rewritten
$$
 \frac{1}{m_1} {\E}^{\kappa} \ln
{\E}^{\{\tau_{k}^1\}}\left[
\left(\sum_{\sigma=1}^{q} \prod_{k=1}^{\kappa} {\E}^{X_{k},\tau_{k}^2} \left[e^{-\beta \delta(\s,\tau_{k})}\right]\right)^{m_1}
\right]\, .
$$
But
$$
{\E}^{X_{k},\tau_{k}^2} \left[e^{-\beta \delta(\s,\tau_{k})}\right]\,
=\,  p e^{-\beta \delta(\s,\tau_{k}^1)} + (1-p) \left(1 - \frac{1-e^{-\beta}}{q}\right)\, .
$$
Using this and taking the limit $m_1 \downarrow 0$ gives
$$
G_N^{(1)}\,
=\, {\E}^{\kappa,\{\tau^1_k\}} \left[
\ln \left(
\sum_{\s=1}^{q} \prod_{k=1}^{\kappa} \left(p e^{-\beta \delta(\sigma,\tau_k^1)} + (1-p) \left(1 - \frac{1-e^{-\beta}}{q}\right)\right)
\right) \right]\, .
$$

Let us now rewrite this in a manner which is appropriate for taking derivatives at $p=0$.
We can write $e^{-\beta \delta(\sigma,\tau_k^1)} = 1 - (1-e^{-\beta}) \delta(\sigma,\tau_k^1)$.
Since the {\em average} value of $\delta(\sigma,\tau_k^1)$ is $1/q$, we may also incorporate that:
$$
e^{-\beta \delta(\sigma,\tau_k^1)}\,
=\, 1 - \frac{1-e^{-\beta}}{q} - (1-e^{-\beta}) (\delta(\sigma,\tau_k^1)-q^{-1})\, .
$$
Then we may rewrite ${\E}^{X_{k},\tau_{k}^2} \left[e^{-\beta \delta(\s,\tau_{k})}\right]$ as
$$
\left(1 - \frac{1-e^{-\beta}}{q}\right) - p (1-e^{-\beta}) (\delta(\sigma,\tau_k^1)-q^{-1})\, .
$$
The formula for $G_N^{(1)}$ is simpler if we introduce a new variable,  $x = (1-e^{-\beta})/(q-e^{-\beta})$.
Therefore, we obtain
\be
\label{eq:G1first}
G_N^{(1)}\, =\, \ln q + c \ln\left(1 - \frac{1-e^{-\beta}}{q}\right)
+ {\E}^{\kappa,\{\tau_k^1\}} \left[
\ln {\E}^{\sigma}\left[
 \prod_{k=1}^{\kappa} \left(1 - px\,  (q\delta(\sigma,\tau_k^1)-1)\right)
\right] \right]\Bigg|_{x = \frac{1-e^{-\beta}}{q - (1-e^{-\beta})}}\, .
\ee

Now we want to consider this formula as a function of $p$ perturbatively near $0$.
We say that the $p=0$ RS ansatz is ``stable to RS perturbations'' if it is a local minimizer of the extended variational principle
in the set of RS ansatze.

Starting from the simpler term, (\ref{eq:G2RS}), we rewrite $G_N^{(2)}$  as
$$
\frac{c}{2}\, \ln\left(1 - \frac{1-e^{-\beta}}{q}\right) + \frac{c(q-1)}{2q}\, \ln \left(1 + \frac{p^2(1-e^{-\beta})}{q - (1-e^{-\beta})}\right)
+\frac{c}{2q}\, \ln \left(1 - \frac{(q-1)(1-e^{-\beta})p^2}{q - (1-e^{-\beta})}\right)\, .
$$
Using $x = (1-e^{-\beta})/(q-e^{-\beta})$ this is simpler:
\be
\label{eq:G2}
G_N^{(2)}\, =\, \frac{c}{2}\, \ln\left(1 - \frac{1-e^{-\beta}}{q}\right) + \frac{c}{2q} \left[(q-1) \ln( 1 + p^2 x) +  \ln(1 - (q-1) p^2 x)\right] \bigg|_{x = \frac{1-e^{-\beta}}{q - (1-e^{-\beta})}}\, .
\ee
This is an even function of $p$, so only even powers will appear.
Taylor expansion shows that
$$
(q-1) \ln (1+p^2 x) + \ln(1 - (q-1) p^2 x)\, =\, - \frac{1}{2}\, q(q-1)p^4x^2\, .
$$
Therefore,
$$
G_N^{(2)}\,
=\, \frac{c}{2}\, \ln\left(1 - \frac{1-e^{-\beta}}{q}\right)
- \frac{c (q-1) p^4 x^2}{4} + O(p^6)\Big|_{x = \frac{1-e^{-\beta}}{q - (1-e^{-\beta})}}\, .
$$
This gives
\be
\label{eq:G2dp}
\frac{d^4}{d p^4} G_N^{(2)}\bigg|_{p=0}\,
=\, - 6 c (q-1) x^2 \Big|_{x = \frac{1-e^{-\beta}}{q - (1-e^{-\beta})}}\, .
\ee

Now turning to the more difficult term, let us start with (\ref{eq:G1first}).
Let us write $f(\sigma,\tau) = (q \delta(\sigma,\tau) - 1)$.
Then we have
\be
\label{eq:G1f}
G_N^{(1)}\, =\, \ln q + c \ln\left(1 - \frac{1-e^{-\beta}}{q}\right)
+ {\E}^{\kappa,\{\tau_k^1\}} \left[
\ln {\E}^{\sigma}\left[
 \prod_{k=1}^{\kappa} \left(1 - px f(\sigma,\tau_k^1)\right)
\right] \right]\Bigg|_{x = \frac{1-e^{-\beta}}{q - (1-e^{-\beta})}}\, .
\ee
As usual, we may interpret the function
$$
{\E}^{\sigma}\left[
 \prod_{k=1}^{\kappa} \left(1 - px f(\sigma,\tau_k^1)\right)
\right]
$$
as a cumulant generating function.
But the random variable is multi-linear in $p$.
Therefore, when expanding in $p$, we have to take account of these terms.
Also, notice that ${\E}^{\sigma}[f(\sigma,\tau)] = {\E}^{\tau}[f(\sigma,\tau)] = 0$
as long as the expectations are with respect to the uniform measure.
Because of this, various terms vanish either in the expectation over ${\E}^{\sigma}$
or in the expectation over ${\E}^{\{\tau_k^1\}}$.

For instance, using the fact that ${\E}^{\sigma}[f(\sigma,\tau)] = 0$, we see that the first derivative in $p$ equals $0$.
Moreover, since each factor is linear in $p$, in taking multiple derivatives (of a single copy of the product) means we cannot repeat
the derivative of any factor.
So we obtain
$$
\frac{d^2}{dp^2}\,
{\E}^{\sigma}\left[
 \prod_{k=1}^{\kappa} \left(1 - px f(\sigma,\tau_k^1)\right)
\right]\, =\, x^2  \sum_{\substack{j,k=1\\ j\neq k}}^{\kappa} {\E}^{\sigma}[f(\s,\tau_{j}^1)f(\s,\tau_k^{1})]\, .
$$
But then taking the expectation over ${\E}^{\{\tau_k^1\}}$ gives $0$ because since $j\neq k$, we have
$$
{\E}^{\{\tau_k^1\}}{\E}^{\sigma}[f(\s,\tau_{j}^1)f(\s,\tau_k^{1})]\,
=\, {\E}^{\sigma}\left[{\E}^{\tau_j^1}[f(\s,\tau_{j}^1)] \cdot {\E}^{\tau_k^1}[f(\s,\tau_k^{1})]\right]\, =\, 0\, .
$$
Continuing, we may easily see that the third derivative is again $0$ since ${\E}^{\sigma}[f(\sigma,\tau)] = 0$.
Then, the next simplest term arises from
\begin{align*}
\frac{d^4}{dp^4}\,
{\E}^{\sigma}\left[
 \prod_{k=1}^{\kappa} \left(1 - px f(\sigma,\tau_k^1)\right)
\right]\,
&=\, x^4  \sum_{\substack{j,k,\ell,m=1\\ j\neq k \neq \ell \neq m}}^{\kappa} {\E}^{\sigma}[f(\s,\tau_{j}^1)f(\s,\tau_k^{1})f(\s,\tau_{\ell}^1)f(\s,\tau_m^{1})]\\
&\quad - 3 x^4 \left(\sum_{\substack{j,k=1\\ j\neq k}}^{\kappa} {\E}^{\sigma}[f(\s,\tau_{j}^1)f(\s,\tau_k^{1})]\right)^2\, .
\end{align*}
We can rewrite this by expanding the square of the sum, and using replicated spin variables for products of expectations:
\begin{align*}
\frac{d^4}{dp^4}\,
{\E}^{\sigma}\left[
 \prod_{k=1}^{\kappa} \left(1 - px f(\sigma,\tau_k^1)\right)
\right]\,
&=\, x^4  \sum_{\substack{j,k,\ell,m=1\\ j\neq k \neq \ell \neq m}}^{\kappa} {\E}^{\sigma}[f(\s,\tau_{j}^1)f(\s,\tau_k^{1})f(\s,\tau_{\ell}^1)f(\s,\tau_m^{1})]\\
&\quad - 3 x^4 \sum_{\substack{j,k=1\\ j\neq k}}^{\kappa} \sum_{\substack{\ell,m=1\\ \ell\neq m}}^{\kappa}{\E}^{\sigma,\sigma'}[f(\s,\tau_{j}^1)f(\s,\tau_k^{1})f(\s',\tau_{\ell}^1)f(\s',\tau_m^{1})]\, .
\end{align*}
Any distinct terms for $j,k,\ell,m$ vanish in the expectation over ${\E}^{\{\tau_k^1\}}$.
Therefore all must be paired. That means that the first summand vanishes entirely.
In the second summand, we require $(\ell,m)=(j,k)$ or $(\ell,m)=(k,j)$.
These two possibilities give an extra factor of $2$.
Hence, we obtain
$$
\frac{d^4}{d p^4} G_N^{(1)} \bigg|_{p=0}\, =\, -6 x^4\, {\E}^{\kappa,\{\tau_k^1\}}  \sum_{\substack{j,k=1\\ j\neq k}}^{\kappa}
\left({\E}^{\sigma}[f(\sigma,\tau_j^1) f(\sigma,\tau_k^1)]\right)^2\Bigg|_{x = \frac{1-e^{-\beta}}{q - (1-e^{-\beta})}}\, .
$$
A calculation gives
$$
{\E}^{\sigma}[f(\sigma,\tau_j^1) f(\sigma,\tau_k^1)]\,
=\, \begin{cases} - 1 & \text{ if $\tau_j^1 \neq \tau_k^1$,}\\
(q-1) & \text { if $\tau_j^1 = \tau_k^1$.}
\end{cases}
$$
Using the i.i.d., uniform distribution on $\{\tau_k^1\}$ gives ${\bf P}\{\tau_j^1 = \tau_k^1\} = 1/q$.
Therefore,
$$
{\E}^{\{\tau_k^1\}} {\E}^{\sigma} [f(\sigma,\tau_j^1) f(\sigma,\tau_k^1)]\, =\, 0\, ,
$$
as we claimed before.
But now we also have
$$
\left({\E}^{\sigma}[f(\sigma,\tau_j^1) f(\sigma,\tau_k^1)]\right)^2\,
=\, \begin{cases}  1 & \text{ if $\tau_j^1 \neq \tau_k^1$,}\\
(q-1)^2 & \text { if $\tau_j^1 = \tau_k^1$,}
\end{cases}
$$
which gives
$$
{\E}^{\{\tau_k^1\}} \left[
\left({\E}^{\sigma}[f(\sigma,\tau_j^1) f(\sigma,\tau_k^1)]\right)^2 \right]\, =\, q-1\, .
$$
Therefore, also using the fact that ${\E}^{\kappa} [\#\{(j,k) \in \{1,\dots,\kappa\}^2\, :\, j\neq k\}]$
equals ${\E}^{\kappa}[\kappa(\kappa-1)] = c^2$, we obtain
\be
\label{eq:G1dp}
\frac{d^4}{d p^4} G_N^{(1)} \bigg|_{p=0}\, =\, - 6 c^2 (q-1) x^4\Big|_{x = \frac{1-e^{-\beta}}{q - (1-e^{-\beta})}}\, .
\ee

\section*{Acknowledgements}

We thank Alessandra Bianchi and Anton Bovier for useful discussion on the ferromagnetic
version of the model.
We thank S. Franz, F. Krzakala and A. Montanari for several useful observations. 
P.C. thanks {\em Strategic Research Grant} (University of Bologna). 
The work of S.D. is supported in part by The Netherlands Organisation for
Scientific Research (NWO). C.G. acknowledges {\em International Research Projects}
(Fondazione Cassa di Risparmio and University of Modena) and {\em FIRB
project} (grant n. RBFR10N90W) for financial support.
The work of S.S. is supported by an NSA Young Investigators grant.

\end{document}